\newtheorem{thm}{Theorem}[section]
\newtheorem{defn}[thm]{Definition}
\newtheorem{lem}[thm]{Lemma}
\newtheorem{cor}[thm]{Corollary}
\newtheorem{rmk}[thm]{Remark}
\def\D{\mathrm{D}}
\def\M{\mathscr{M}}
\def\O{\mathscr{O}}
\def\R{\mathscr{R}}
\def\S{\mathscr{S}}
\def\Z{\mathscr{Z}}
\def\c{\mathrm{c}}
\def\d{\mathrm{d}}
\def\e{\mathrm{e}}
\def\h{\mathrm{h}}
\def\s{\mathrm{s}}
\def\u{\mathrm{u}}
\def\Cset{\mathbb{C}}
\def\Kset{\mathbb{K}}
\def\Lset{\mathbb{L}}
\def\Nset{\mathbb{N}}
\def\Rset{\mathbb{R}}
\def\Zset{\mathbb{Z}}
\def\id{\mathrm{id}}
\def\loc{\mathrm{loc}}
\def\Re{\mathrm{Re}}
\def\Span{\mathrm{span}}
\def\sech{\mathrm{sech}}
\def\half{{\textstyle\frac{1}{2}}}
\def\third{{\textstyle\frac{1}{3}}}
\def\fourth{{\textstyle\frac{1}{4}}}
\def\sixth{{\textstyle\frac{1}{6}}}
\def\epsilon{\varepsilon}
\def\eg{{e.g.}}
\def\etal{\textit{et al.}}
\def\ie{{i.e.}}
\renewcommand{\topfraction}1
\renewcommand{\bottomfraction}1
\begin{document}


\title{Analytic and algebraic conditions for bifurcations of homoclinic orbits I:
Saddle equilibria}

\author{David~Bl\'azquez-Sanz \& Kazuyuki~Yagasaki}
\date{May 23, 2010}

\maketitle

\begin{abstract}
We study bifurcations of homoclinic orbits to hyperbolic saddle equilibria
in a class of four-dimensional systems which may be Hamiltonian or not.
Only one parameter is enough
to treat these types of bifurcations in Hamiltonian systems
but two parameters are needed in general systems.
We apply a version of Melnikov's method due to Gruendler
to obtain saddle-node and pitchfork types of bifurcation results
for homoclinic orbits.
Furthermore we prove that if these bifurcations occur,
then the variational equations around the homoclinic orbits
are integrable in the meaning of differential Galois theory
under the assumption
that the homoclinic orbits lie on analytic invariant manifolds.
We illustrate our theories with an example
which arises as stationary states of
coupled real Ginzburg-Landau partial differential equations, 
and demonstrate the theoretical results by numerical ones.\\[1ex]
\noindent{\bf Keywords:} 
Homoclinic orbit, bifurcation, Differential Galois theory, Melnikov method \\[1ex]
\noindent{\bf MSC2000:} 34C23, 34C37, 37C29, 34A30, 37J20, 37J45, 35B32
\end{abstract}

\section{Introduction}

Bifurcations of homoclinic orbits in ordinary differential equations
 have been studied in numerous articles over the past decades.
They also arise
 as bifurcations of solitons or pulses in partial differential equations (PDEs),
 and have attracted much attention even in the fields of PDEs and nonlinear waves
 (see, \eg, Section~2 of \cite{S02}).

Many of researches on this topic are related to bifurcations
 from single-bump homoclinic orbits to multi-bump ones.
See Section~5.2.4 of \cite{S02}
 for a concise and useful review of the references.
As an exceptional study, Knobloch \cite{K97} showed that
 a saddle-node type of bifurcation
 for homoclinic orbits to hyperbolic saddles can occur
 in reversible and conservative systems with one parameter
 when the homoclinic orbits are non-transversal,
 \ie, the tangent spaces to the stable and unstable manifolds of the saddles
 along them have two-dimensional intersections,
 using so-called Lin's method \cite{L90}.
We can also use a version of Melnikov's method \cite{GH83,M63}
 due to Gruendler \cite{G92},
 who studied the persistence of homoclinic orbits,
 to treat such bifurcations
 in general higher-dimensional systems with two parameters.
We note that only one parameter is enough
 to treat these bifurcations
 for reversible and conservative systems
 but two parameters are needed in general systems.
A pitchfork type of bifurcation
 for homoclinic orbits to saddle-centers
 was also detected in reversible systems \cite{YW06},
 using an idea similar to that of Melnikov's method.

\begin{figure}
\begin{center}
\includegraphics[scale=0.8]{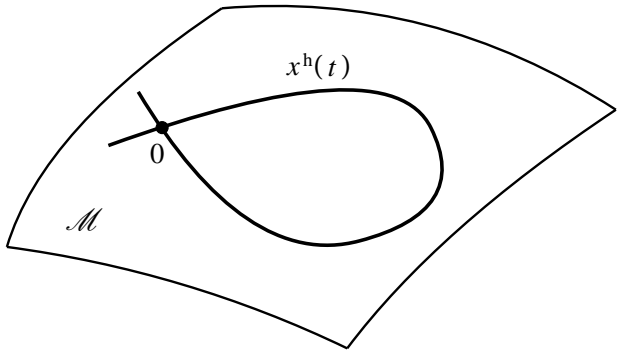}
\caption{Assumption~(A2)}
\label{fig:A}
\end{center}
\end{figure}

Here we are interested in the latter,
 saddle-node and pitchfork types of bifurcations
 for homoclinic orbits to hyperbolic saddles in systems of the form
\begin{equation}
\dot{x}=f(x;\mu),\quad
x\in\Rset^n,\quad
\mu\in\Rset^m,
\label{eqn:sys}
\end{equation}
where $f:\Rset^n\times\Rset^m\rightarrow\Rset^n$ is analytic,
 $\mu$ is a parameter (vector), $n$ is a positive integer and $m=1$ or 2.
Note that Eq.~\eqref{eqn:sys} may be Hamiltonian or not.
We consider the case of $n=4$ and make the following assumptions.
\begin{enumerate}
\item[\bf(A1)]
The origin $x=0$ is a hyperbolic saddle equilibrium in \eqref{eqn:sys} at $\mu=0$,
 such that $\D_x f(0;0)$ has four real eigenvalues,
 $\lambda_1\le\lambda_2<0<\lambda_3\le\lambda_4$.
\item[\bf(A2)]
At $\mu=0$ the hyperbolic saddle $x=0$ has a homoclinic orbit $x^\h(t)$.
Moreover, there exists a two-dimensional analytic invariant manifold $\M$
 containing $x=0$ and $x^\h(t)$ (see Fig.~\ref{fig:A}).
\end{enumerate} 
From (A1) we can also assume that
 the origin $x=0$ is still a hyperbolic saddle near $\mu=0$
 under some change of coordinates.
Assumption (A2) implies that
 the complexification of the vector field \eqref{eqn:sys} on $\M$ with $\mu=0$
 has a \emph{complex} separatrix $\Gamma$
 whose real part is the \emph{real} homoclinic orbit $x^\h(t)$.
In particular, the existence of the invariant manifold $\M$ is essential
 in Theorem~\ref{thm:main} below.
We also remark that as well known \cite{H96,S00,S98},
 there exists a center manifold for $x^\h(t)$
 under a more general condition,
 but it is not analytic in general.

As seen in \cite{G92,K97,S02},
 \emph{variational equations (VEs)} play an important role
 in persistence and bifurcation problems of homoclinic orbits.
The VE of \eqref{eqn:sys}
 around the homoclinic orbit at $\mu=0$ is given by
\begin{equation}
\dot{\xi}=\D_x f(x^\h(t);0)\xi,
\label{eqn:ve}
\end{equation}
where $\xi\in\Rset^n$.
We easily see that
 $\xi=\dot{x}^\h(t)$ is a bounded solution of \eqref{eqn:ve}
 tending to zero exponentially as $t\rightarrow\pm\infty$.
We consider the following condition.
\begin{enumerate}
\item
The VE~\eqref{eqn:ve} has another independent bounded solution.
\end{enumerate}
In Section~2,
 using Gruendler's version of Melnikov's method \cite{G92}
 instead of Lin's method \cite{L90},
 we prove the following results for \eqref{eqn:sys} with $n\ge 4$:
\begin{enumerate}
\item[(i)]
If condition~(C) holds,
 then a saddle-node or pitchfork bifurcation of homoclinic orbits occurs
 under some nondegenerate condition;
\item[(ii)]
If condition~(C) does not hold,
 then these bifurcations do not occur.
\end{enumerate}
Thus, condition~(C) provides a criterion
 for saddle-node and pitchfork bifurcations of homoclinic orbits
 in \eqref{eqn:sys}.

On the other hand,
 the problem of integrability is especially important
 in dynamical systems, along with bifurcations and chaos.
In particular,
 it has been extensively studied for two classes of dynamical systems,
 linear differential equations and Hamiltonian systems.
See, \eg, \cite{A89,VS03} for the notions of integrability
 in these two classes.
The former is also briefly described in Section~3.
Differential Galois theory is one for the former class
 and provides algorithms to solve integrable linear differential equations.
Morales-Ruiz and Ramis \cite{M99,MR01}
 applied the differential Galois theory
 and uncovered a relation between integrability of both classes:
If a Hamiltonian system is integrable near its particular solution
 in the context of complex Hamiltonian systems
 (\ie, in the sense of Liouville with meromorphic first integrals),
 then the related VE is integrable
 in the context of linear differential equations
 (\ie, in the meaning of differential Galois theory).
Their result was further extended to higher-order VEs \cite{MRS07}.
An implication of their result for the occurrence of chaotic dynamics
 in two-degree-of-freedom Hamiltonian systems with saddle-centers
 was also discussed in \cite{MP99,Y03}.

In this series of articles,
 we give another application of the differential Galois theory
 and clarify a connection of saddle-node and pitchfork bifurcations
 of homoclinic orbits in \eqref{eqn:sys}
 with the integrability of the VE~\eqref{eqn:ve}
 in the meaning of differential Galois theory.
More precisely, we prove the following theorem
(The proof is given in Section~4).
\begin{thm}
\label{thm:main}
Suppose that condition~(C) holds.
Then the VE~\eqref{eqn:ve}
 has a triangularizable differential Galois group,
when regarded as a complex differential equation
 with meromorphic coefficients
 in a desingularized neighborhood $\Gamma_\loc$ of the homoclinic orbit $x^\h(t)$
 in $\Cset^4$.
\end{thm}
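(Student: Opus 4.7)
The plan is to exhibit a complete flag of invariant meromorphic sub-bundles
$$0 \subset V_1 \subset V_2 \subset V_3 \subset V_4 = \Cset^4$$
of the rank-$4$ meromorphic connection on $\Gamma_\loc$ determined by the complexification of \eqref{eqn:ve}. Once such a flag is in hand, the differential Galois group $G$ preserves it under the Picard--Vessiot correspondence, and hence is conjugate over the base differential field of meromorphic functions on $\Gamma_\loc$ to a subgroup of upper-triangular matrices.

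First I would complexify the relevant geometric data. By (A2) the real analytic invariant manifold $\M$ extends to a complex analytic invariant $2$-manifold $\M_\Cset$ containing the complex separatrix $\Gamma$, and by the analytic stable/unstable manifold theorem applied to the complexified vector field, $W^\s(0)$ and $W^\u(0)$ extend to complex analytic $2$-manifolds through $0$; all three contain $\Gamma$. Pulling the tangent bundles $T\M$, $TW^\s$, and $TW^\u$ back along the desingularization $\Gamma_\loc\to\Cset^4$ produces three rank-$2$ meromorphic sub-bundles of the ambient trivial rank-$4$ bundle, and each is invariant under the VE connection because the underlying submanifolds are invariant under the flow.

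The second step assembles these into the flag. Set $V_1=\Cset\cdot\dot x^\h(t)$, a rank-$1$ meromorphic invariant sub-bundle contained in each of the three tangent sub-bundles, and take $V_2 = T\M\big|_{\Gamma_\loc}\supset V_1$. The decisive use of condition~(C) is now to produce a second rank-$2$ invariant sub-bundle distinct from $T\M$: the space of bounded real solutions of \eqref{eqn:ve} evaluated at $t=0$ coincides with $TW^\s\cap TW^\u$ at $x^\h(0)$, so (C) forces this intersection to have dimension $\ge 2$; since $TW^\s$ and $TW^\u$ are each $2$-dimensional, this gives $TW^\s = TW^\u$ at $x^\h(0)$, and by invariance of $W^\s$ and $W^\u$ under the flow the equality propagates along the whole of $\Gamma_\loc$. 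Denote the resulting rank-$2$ invariant sub-bundle by $T$. Its fiber at the point of $\Gamma_\loc$ over $t=+\infty$ is $E^\s$, whereas the fiber of $T\M$ there is $T_0\M$; by (A2), $T_0\M$ must contain one stable and one unstable eigendirection of $\D_x f(0;0)$ in order to accommodate the two ends of the homoclinic orbit, so $T_0\M\ne E^\s$. Therefore $T\ne T\M$, the intersection $T\cap T\M$ has rank exactly $1$ and coincides with $V_1$, and $V_3 := T + T\M$ is a rank-$3$ invariant meromorphic sub-bundle satisfying $V_2\subset V_3$.

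Setting $V_4=\Cset^4$ completes the flag of sub-modules defined over the base field, so the Picard--Vessiot correspondence produces a full $G$-invariant flag in the $4$-dimensional solution space, and $G$ is triangularizable. The step I expect to require the most care is verifying that $T\M$, $TW^\s$, and $TW^\u$ really do extend as meromorphic (indeed holomorphic) sub-bundles across the two singular points of $\Gamma_\loc$ lying above the equilibrium $x=0$; this rests on the analyticity of $\M$, $W^\s$, and $W^\u$ at $0$ together with the regular-singular nature of \eqref{eqn:ve} at these points, which provides local bases of sections with at most finite-order singularities in any uniformizer at the singular points.
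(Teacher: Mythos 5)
Your proposal is correct in essence but follows a genuinely different route from the paper, so let me compare them.

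The paper's proof reduces everything to the two-dimensional \emph{normal variational equation} (NVE) via the splitting $T\Cset^4|_\M = T\M \oplus N\M$: it first shows (Lemma~\ref{lem:Cprime}) that condition~(C) forces the NVE to have a non-vanishing bounded solution, then uses the regular-singularity analysis of Lemma~\ref{Fuchsian} to identify that bounded solution as a \emph{common eigenvector} of the two local monodromies around $0_\pm$, from which triangularizability of the NVE's monodromy group is immediate (any group of $2\times2$ matrices with a common eigenvector is triangular after conjugation), and finally invokes Schlessinger's theorem (Corollary~\ref{TriangularG}) to pass to the differential Galois group. The TVE is handled separately, and the full VE is reassembled by variation of constants. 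Your proof instead stays in rank $4$ and produces a complete flag $V_1\subset V_2\subset V_3\subset V_4$ of $\D$-invariant meromorphic sub-bundles, then appeals directly to the Picard--Vessiot correspondence rather than to Schlessinger. The paper's reduction to the $2\times2$ NVE is what makes the explicit computations of Section~5 (hypergeometric reduction, Kimura's criterion) possible, so it buys concreteness; your flag construction buys a cleaner conceptual picture that makes the role of each invariant sub-bundle visible without ever forming the NVE.

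Two places in your write-up deserve tightening, though neither is fatal. First, the assertion that the globally-defined complexifications $W^\s_\Cset$, $W^\u_\Cset$ \emph{both contain} $\Gamma_\loc$ (so that $T=TW^\s_\Cset=TW^\u_\Cset$ along all of $\Gamma_\loc$) is not automatic: $\Gamma_\loc$ sits inside the complexified local stable manifold only near $0_+$ and inside the local unstable manifold only near $0_-$, and propagating both inclusions across the whole strip uses invariance plus analytic continuation that should be spelled out. The cleaner route is to \emph{define} $T$ directly as the sub-bundle spanned by the analytic continuations of the two independent bounded solutions $\varphi_1,\varphi_2$; this is manifestly $\D$-invariant, it is single-valued because the decaying subspaces at $0_\pm$ are monodromy-invariant and coincide (for $n_0=2$ both equal $\Span\{\varphi_1,\varphi_2\}$), and it extends holomorphically across $0_\pm$ from the Fuchsian local form $\Phi=Y(z)z^E$. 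Second, $T+T\M$ need not a priori be a sub-\emph{bundle}; one should take its saturation, which on a Riemann surface is a rank-$3$ locally free invariant subsheaf containing $V_2$, and invariance is preserved under saturation. Once these two points are patched, the flag argument and the appeal to the Tannakian/Picard--Vessiot correspondence (a complete flag of invariant sub-bundles gives a complete flag of $G$-invariant subspaces of the solution space, hence $G$ lies in a Borel) go through and yield the theorem.
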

This theorem means that under condition~(C)
 the VE~\eqref{eqn:ve} is integrable
 in the meaning of differential Galois theory.
See Sections~3 and 4 for more details on the statement.
Such a relation between integrability in the differential Galois setting
and the existence of bounded solutions for linear differential equations was empirically noticed in the context of exactly solvable potentials
 in one dimensional Schr\"odinger equations
 by Acosta-Humanez \etal\ \cite{AMW10}.
Here we concentrate ourselves on the case of general or Hamiltonian systems
 with hyperbolic saddle equilibria.
The case of reversible systems or saddle-center equilibria
 will be treated in part~II.

The outline of this paper is as follows:
In Section~2 we extend Gruendler's version of Melnikov's method \cite{G92}
 and present bifurcation results
 for homoclinic orbits not only in Hamiltonian systems
 but also in more general systems.
These results are easily obtained from the result of \cite{G92}
 but many of them are not found in other references, to the authors' knowledge.
We give necessary information on differential Galois theory in Section~3
 and state a proof of Theorem~\ref{thm:main} in Section~4.
In the proof, under assumption~(A2),
 we reduce integrability of the VE~\eqref{eqn:ve}
 to that of a two-dimensional linear system
 which consists of components normal to the invariant manifold $\M$
 and is called the \emph{normal variational equation (NVE)}.
Finally, in Section~5, we illustrate our theories with an example
 which arises as stationary states of 
 coupled real Ginzburg-Landau PDEs.
The theoretical results are also demonstrated by numerical ones
 using a numerical computation tool called AUTO97 \cite{AUTO97}.

\section{Melnikov analyses}

In this section,
 we extend Gruendler's version of Melnikov's method \cite{G92}
 and discuss the persistence and bifurcations of homoclinic orbits
 in \eqref{eqn:sys}.

\subsection{Extension of Melnikov's method}
We consider a more general situation of $n\ge 2$ in \eqref{eqn:sys}
 and assume the followings:
\begin{enumerate}
\item[\bf(M1)]
The origin $x=0$ is a hyperbolic saddle in \eqref{eqn:sys} at $\mu=0$
 such that $\D_x f(0;0)$ has $n_\s$ and $n_\u$ eigenvalues
 with negative and positive real parts, respectively, where $n_\s+n_\u=n$.
\item[\bf(M2)]
The equilibrium $x=0$ has a homoclinic orbit $x^\h(t)$ at $\mu=0$.
\item[\bf(M3)]
The VE~(\ref{eqn:ve}) has just $n_0$ independent bounded solutions,
 $\xi=\varphi_1(t)=x^\h(t),\varphi_2(t),\ldots,\varphi_{n_0}(t)$.
\end{enumerate}
Assumptions~(M1) and (M2) mean that
 the stable and unstable manifolds of the origin at $\mu=0$,
 $W_0^\s(0)$ and $W_0^\u(0)$, are of dimension $n_\s$ and $n_\u$, respectively,
 and follow from (A1) and (A2) for $n=4$ with $n_\s=n_\u=2$.
By assuming (M3), we have $n_\s,n_u\ge n_0$ and
\[
\dim(T_{x}W_0^\s(0)\cap T_{x}W_0^\u(0))=n_0
\]
along the homoclinic orbit $x^\h(t)$.
Note that
 the existence of an analytic invariant manifold
 containing the equilibrium $x=0$ and homoclinic orbit $x^\h(t)$
 is not required.

Using Theorem~1 of \cite{G92}, we obtain the following lemma immediately.
\begin{lem}
\label{lem:m1}
There exists a fundamental matrix
 $\Phi(t)=(\varphi_1(t),\ldots,\varphi_n(t))$ of (\ref{eqn:ve}) such that
 for nonnegative integers $k_j$, $j=1,\ldots,n$,
 and a permutation $\sigma$ on $n$ symbols $\{1,\ldots,n\}$,
\begin{align*}
&
\varphi_j(t)t^{-k_j}\e^{-\Re(\tilde{\lambda}_j)t}=\O(1)
 \quad\mbox{as $t\rightarrow+\infty$},\\
&
\varphi_j(t)t^{-k_{\sigma(j)}}\e^{-\Re(\tilde{\lambda}_{\sigma(j)})t}=\O(1)
 \quad\mbox{as $t\rightarrow-\infty$},
\end{align*}
where $\tilde{\lambda}_j$, $j=1,\ldots,n$, denote the eigenvalues of $\D_x f(0;0)$
 repeated according to algebraic multiplicity,
 and $\Re(\tilde{\lambda}_j)$ is negative if $j\le n_\s$ and positive if $j>n_\u$.
\end{lem}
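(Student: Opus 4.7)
The plan is to recognize that this lemma is essentially a direct invocation of Gruendler's Theorem~1 in \cite{G92}, so the proof reduces to (a) verifying that the hypotheses match and (b) explaining why the asymptotic form stated is the expected one. The content comes from the general theory of asymptotic integration for linear systems whose coefficient matrix tends exponentially to a constant.

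First, I would observe that assumptions (M1) and (M2) force $x^\h(t)\to 0$ exponentially as $t\to\pm\infty$, with decay rate controlled by the spectrum of $\D_x f(0;0)$ (since $x^\h(t)$ lies in $W_0^\u(0)\cap W_0^\s(0)$). Consequently the coefficient matrix $A(t):=\D_x f(x^\h(t);0)$ of the VE~\eqref{eqn:ve} satisfies
\[
\|A(t)-A_\infty\|\le C\e^{-\beta|t|}\quad\text{for some }\beta,C>0,
\]
where $A_\infty:=\D_x f(0;0)$. This is precisely the framework in which Gruendler's Theorem~1 operates.

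Second, I would quote Gruendler's Theorem~1 to produce a fundamental matrix $\Phi(t)=(\varphi_1(t),\dots,\varphi_n(t))$ whose columns, as $t\to+\infty$, are asymptotic to solutions of the frozen system $\dot\xi=A_\infty\xi$. For each generalized eigenvector of $A_\infty$ associated with eigenvalue $\tilde\lambda_j$ (repeated according to algebraic multiplicity), the corresponding solution is $\O\bigl(t^{k_j}\e^{\Re(\tilde\lambda_j)t}\bigr)$ in norm, where $k_j$ is a nonnegative integer bounded by the size of the relevant Jordan block minus one. This gives the first asymptotic formula.

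Third, the $t\to-\infty$ formula requires a re-matching of the basis: a solution that is, say, a pure stable mode at $+\infty$ is generically a mix of modes at $-\infty$, dominated by one unstable mode there. Gruendler's theorem tracks this by providing a permutation $\sigma$ of $\{1,\ldots,n\}$ such that $\varphi_j(t)$ behaves like $t^{k_{\sigma(j)}}\e^{\Re(\tilde\lambda_{\sigma(j)})t}$ as $t\to-\infty$. The existence of the $n_0$ bounded solutions in (M3) is precisely the compatibility condition ensuring that the same basis serves at both ends, since each bounded $\varphi_j$ must correspond to a stable eigenvalue at $+\infty$ and an unstable eigenvalue at $-\infty$ under $\sigma$.

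The main obstacle, strictly speaking, lies in Gruendler's Theorem~1 itself, which rests on exponential dichotomies and a careful Jordan-block analysis of the limiting constant-coefficient system; but that obstacle is discharged by citation. Once hypotheses (M1)--(M3) are translated into the hypotheses of \cite[Thm.~1]{G92}, the conclusion of Lemma~\ref{lem:m1} is immediate, which is why the text states the result follows ``immediately''.
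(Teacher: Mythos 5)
Your proposal is correct and follows the same route as the paper, which simply cites Theorem~1 of Gruendler's 1992 paper with no further elaboration; you have usefully unpacked \emph{why} the hypotheses of that theorem are met (exponential convergence of $\D_x f(x^\h(t);0)$ to $\D_x f(0;0)$) and what drives the permutation $\sigma$. One small caveat: the lemma itself follows from (M1)--(M2) alone, and (M3) is invoked only \emph{afterwards} in the paper to single out the $n_0$ bounded columns and derive \eqref{eqn:vphi1}; so your framing of (M3) as a ``compatibility condition'' for the lemma slightly overstates its role here, though it does not affect the validity of the argument.
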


By assumption~(M3) and Lemma~\ref{lem:m1} we have
\begin{equation}
\begin{array}{ll}
\displaystyle
 \lim_{t\rightarrow\pm\infty}\varphi_j(t)=0 & \mbox{for $j=1,\ldots,n_0$};\\
\displaystyle
 \lim_{t\rightarrow+\infty}\varphi_j(t)=0,\quad
 \lim_{t\rightarrow-\infty}\varphi_j(t)=\infty &\mbox{for $j=n_0+1,\ldots,n_s$},
\end{array}
\label{eqn:vphi1}
\end{equation}
and assume that
\begin{equation}
\begin{array}{ll}
\displaystyle
 \lim_{t\rightarrow\pm\infty}\varphi_j(t)=\infty
 &\mbox{for $j=n_\s+1,\ldots,n_\s+n_0$};\\
\displaystyle
 \lim_{t\rightarrow \infty}\varphi_j(t)=\infty,\quad
 \lim_{t\rightarrow -\infty}\varphi_j(t)=0 & \mbox{for $j=n_s+n_0+1,\ldots,n$}.
\end{array}
\label{eqn:vphi2}
\end{equation}
Define $\psi_j(t)$ for each $j=1,\ldots,n$ by
\[
\langle\psi_j(t),\varphi_k(t)\rangle=\delta_{jk},
\]
where $\delta_{jk}$ is Kronecker's delta,
 and $\langle\xi,\eta\rangle$ represents the inner product of $\xi,\eta\in\Rset^n$.
It immediately follows from \eqref{eqn:vphi1} and \eqref{eqn:vphi2} that
\begin{equation}
\begin{array}{ll}
\displaystyle
 \lim_{t\rightarrow\pm\infty}\psi_j(t)=\infty & \mbox{for $j=1,\ldots,n_0$};\\
\displaystyle
 \lim_{t\rightarrow+\infty}\psi_j(t)=\infty,\quad
 \lim_{t\rightarrow-\infty}\psi_j(t)=0 &\mbox{for $j=n_0+1,\ldots,n_s$};\\
\displaystyle
 \lim_{t\rightarrow\pm\infty}\psi_j(t)=0
 &\mbox{for $j=n_\s+1,\ldots,n_\s+n_0$};\\
\displaystyle
 \lim_{t\rightarrow \infty}\psi_j(t)=0,\quad
 \lim_{t\rightarrow -\infty}\psi_j(t)=\infty & \mbox{for $j=n_s+n_0+1,\ldots,n$}.
\end{array}
\label{eqn:psi}
\end{equation}
The functions $\psi_j(t)$, $j=1,\ldots,n$, can be obtained
 by the formula $\Psi(t)=(\Phi^*(t))^{-1}$,
 where $\Psi(t)=(\psi_1(t),\ldots,\psi_n(t))$
 and $\Phi^*(t)$ is the transpose matrix of $\Phi(t)$.
Moreover, $\Psi(t)$ is a fundamental matrix to the adjoint equation
\begin{equation}
\dot{\xi}=-\D_x f(x^\h(t);0)^* \xi
\label{eqn:ave}
\end{equation}
since $\Psi^*(t)\Phi(t)=\id_n$ with $\id_n$ the $n\times n$ identity matrix,
 so that $\Psi^{\prime *}(t)\Phi(t)+\Psi^*(t)\Phi'(t)=0$, \ie, 
\[
\dot{\Psi}(t)=-(\Psi^*(t)\dot{\Phi}(t)\Phi^{-1}(t))^*
 =-\D_x f(x^\h(t);0)^*\Psi(t).
\]

Now we look for a homoclinic orbit of the form
\begin{equation}
x=x^\h(t)+\sum_{j=1}^{n_0-1}\alpha_j\varphi_{j+1}(t)+\O(\sqrt{|\alpha|^4+|\mu|^2})
\label{eqn:ho}
\end{equation}
in (\ref{eqn:sys}) with $\mu\neq 0$, where $\alpha=(\alpha_1,\ldots,\alpha_{n_0-1})$.
Here the $\O(\alpha)$-terms are ignored in \eqref{eqn:ho} if $n_0=1$.

Let $\kappa$ be a positive real number such that
\[
\kappa<\fourth\,|\Re(\tilde{\lambda}_j)|,\quad
j=1,\ldots,n,
\]
and define two Banach spaces
\begin{eqnarray*}
\Z^0 &=& \{z\in C^0(\Rset,\Rset^n)\,|\,\sup_t|z(t)|e^{\kappa|t|}<\infty\},\\
\Z^1 &=& \{z\in C^1(\Rset,\Rset^n)\,|\,\sup_t|z(t)|e^{\kappa|t|}<\infty,
 \sup_x|\dot{z}(t)|e^{\kappa|t|}<\infty\},
\end{eqnarray*}
where the maximum of the suprema is taken as a norm of each spaces.
The following lemma is frequently used in the rest of this section.
\begin{lem}
\label{lem:m2}
Let $z\in\Z^1$.
Then we have
\[
\int_{-\infty}^\infty\langle\psi_{n_\s+j}(t),
\dot{z}(t)-\D_x f(x^\h(t);0)z(t)\rangle\,\d t=0,\quad
j=1,\ldots,n_0.
\]
\end{lem}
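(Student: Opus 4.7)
The plan is to recognize the integrand as a total derivative and then apply the fundamental theorem of calculus, controlling the boundary terms via the decay/growth estimates already established.

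First I would use the fact, established in the paragraph preceding the lemma, that $\Psi(t)=(\psi_1(t),\ldots,\psi_n(t))$ is a fundamental matrix of the adjoint equation \eqref{eqn:ave}. In particular, each $\psi_{n_\s+j}(t)$, $j=1,\ldots,n_0$, satisfies $\dot{\psi}_{n_\s+j}(t)=-\D_x f(x^\h(t);0)^*\psi_{n_\s+j}(t)$. Then for any $z\in\Z^1$ a direct computation gives
\[
\frac{\d}{\d t}\langle\psi_{n_\s+j}(t),z(t)\rangle
=\langle\dot\psi_{n_\s+j}(t),z(t)\rangle+\langle\psi_{n_\s+j}(t),\dot z(t)\rangle
=\langle\psi_{n_\s+j}(t),\dot z(t)-\D_x f(x^\h(t);0)z(t)\rangle,
\]
where the adjoint property has been used to move $\D_x f(x^\h(t);0)^*$ off of $\psi_{n_\s+j}$. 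Thus the integrand in the statement is exactly the derivative of $\langle\psi_{n_\s+j}(t),z(t)\rangle$.

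Next I would apply the fundamental theorem of calculus. The integral equals
\[
\lim_{t\rightarrow+\infty}\langle\psi_{n_\s+j}(t),z(t)\rangle
-\lim_{t\rightarrow-\infty}\langle\psi_{n_\s+j}(t),z(t)\rangle,
\]
provided these limits exist and the integrand is itself integrable. This is where the decay structure comes in: by definition of $\Z^1$, $|z(t)|=\O(\e^{-\kappa|t|})$, and by \eqref{eqn:psi} together with Lemma~\ref{lem:m1} applied to the adjoint system, $\psi_{n_\s+j}(t)\rightarrow 0$ as $t\rightarrow\pm\infty$ at a polynomial-times-exponential rate governed by $\Re(\tilde\lambda_k)$. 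Since $\kappa$ was chosen smaller than $\tfrac{1}{4}|\Re(\tilde\lambda_k)|$ for every $k$, the product $\psi_{n_\s+j}(t)z(t)$ decays exponentially as $|t|\rightarrow\infty$, so both boundary terms vanish and the integrability of the integrand over $\Rset$ is automatic.

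The only subtlety, and thus the nearest thing to an obstacle, is confirming that $\psi_{n_\s+j}(t)\rightarrow 0$ at both ends with a rate compatible with the weight $\e^{-\kappa|t|}$; but this is a direct consequence of the classification in \eqref{eqn:psi} combined with the asymptotic estimates of Lemma~\ref{lem:m1} applied to the adjoint VE, so no further work beyond invoking these facts is required.
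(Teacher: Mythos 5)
Your proof is correct and follows essentially the same route as the paper: identify the integrand as the total derivative $\frac{\d}{\d t}\langle\psi_{n_\s+j}(t),z(t)\rangle$ using the adjoint equation, then observe that the boundary terms vanish because $\psi_{n_\s+j}(t)\to 0$ as $t\to\pm\infty$ (by \eqref{eqn:psi} and Lemma~\ref{lem:m1}) while $z\in\Z^1$ decays like $\e^{-\kappa|t|}$. The paper states the decay estimate first and the derivative identity second, but the logic is the same.
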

\begin{proof}
The proof is found dividedly in \cite{G92}.
Here it is given for the reader's convenience.

We easily see that for $z\in\Z^0$
\[
|\langle\psi_{n_\s+j}(t),z(t)\rangle|\le K_0 \|z\|\e^{-\kappa t},\quad
j=1,\ldots,n_0,
\]
by Lemma~\ref{lem:m1} and \eqref{eqn:psi},
where $K_0$ is some constant independent of $z$.
On the other hand,
\begin{align*}
&
\langle\psi_{n_\s+j}(t),\dot{z}(t)-\D_x f(x^\h(t);0)z(t)\rangle\\
&
=\langle\psi_{n_\s+j}(t),\dot{z}(t)\rangle
 -\langle\D_x f(x^\h(t);0)^*\psi_{n_\s+j}(t),z(t)\rangle\\
&
=\langle\psi_{n_\s+j}(t),\dot{z}(t)\rangle
 +\langle\dot{\psi}_{n_\s+j}(t),z(t)\rangle
=\frac{\d}{\d t}\langle\psi_{n_\s+j}(t),z(t)\rangle
\end{align*}
since $\psi_{n_\s+j}(t)$ is a solution \eqref{eqn:ave}.
Thus, we obtain the result.
\end{proof}
From Theorem~4 of \cite{G92} we also have the following lemma.
\begin{lem}
\label{lem:m3}
The nonhomogeneous VE,
\begin{equation}
\dot{\xi}=\D_x f(x^\h(t);0)\xi+\eta(t)
\label{eqn:nhve}
\end{equation}
with $\eta\in\Z^0$, has a solution in $\Z^1$ if and only if
\begin{equation}
\int_{-\infty}^{\infty}
 \langle\psi_{n_\s+j}(t),\eta(t)\rangle\,dt=0,\quad
j=1,\ldots,n_0.
\label{eqn:lem3a}
\end{equation}
Moreover, if condition (\ref{eqn:lem3a}) holds,
 then there exists a unique solution to (\ref{eqn:nhve})
 satisfying $\langle\psi_j(0),\xi(0)\rangle=0$ for $j=1,\ldots,n_0$.
\end{lem}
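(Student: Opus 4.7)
The plan is to prove necessity and sufficiency as two separate steps and then to pin down the $n_0$ free parameters by the normalization at $t=0$. Necessity is immediate from Lemma~\ref{lem:m2}: any $\xi\in\Z^1$ satisfying \eqref{eqn:nhve} is a legitimate test element for Lemma~\ref{lem:m2} with $z=\xi$, and the conclusion of that lemma is precisely \eqref{eqn:lem3a}.

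For sufficiency I would construct a particular solution by variation of parameters using the Gruendler-type fundamental matrix $\Phi(t)=(\varphi_1(t),\ldots,\varphi_n(t))$ of Lemma~\ref{lem:m1} and its biorthogonal dual columns $\psi_j(t)$ defined by $\langle\psi_j,\varphi_k\rangle=\delta_{jk}$. The general solution of \eqref{eqn:nhve} can be written as
\[
\xi(t)=\sum_{j=1}^{n}\varphi_j(t)\left(c_j+\int_{t_j}^t\langle\psi_j(s),\eta(s)\rangle\,\d s\right),
\]
and I would choose the base points $t_j\in\{-\infty,0,+\infty\}$ and the constants $c_j$ according to the four categories of growth/decay in \eqref{eqn:vphi1}--\eqref{eqn:psi}. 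For the bounded homogeneous modes $j=1,\ldots,n_0$ take $t_j=0$ and leave $c_j$ free. For the modes that decay only at $+\infty$, namely $j=n_0+1,\ldots,n_\s$, take $t_j=-\infty$ and $c_j=0$, so that the growth of $\varphi_j$ at $-\infty$ is suppressed by the vanishing of the integral at that endpoint; symmetrically, for $j=n_\s+n_0+1,\ldots,n$ take $t_j=+\infty$ and $c_j=0$. The delicate category is $j=n_\s+1,\ldots,n_\s+n_0$, where $\varphi_j$ blows up at \emph{both} $\pm\infty$ while $\psi_j$ decays at both; here the hypothesis \eqref{eqn:lem3a} forces $\int_{-\infty}^{+\infty}\langle\psi_j(s),\eta(s)\rangle\,\d s=0$, so the tail $\int_{-\infty}^t$ can be rewritten as $-\int_t^{+\infty}$ and consequently decays at both ends fast enough to defeat the growth of $\varphi_j$.

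The main obstacle is the routine but careful bookkeeping needed to verify that the constructed $\xi(t)$ and its derivative actually lie in $\Z^0$. For each category one pairs the exponential rate $|\Re(\tilde\lambda_j)|$ governing $\varphi_j$ in Lemma~\ref{lem:m1} against the dual rate governing $\psi_j$ and against the weight $\e^{-\kappa|t|}$ built into $\eta\in\Z^0$; the cushion $\kappa<\fourth|\Re(\tilde\lambda_j)|$ provides just enough margin for all the relevant integrals to converge and for the products $\varphi_j(t)\cdot\int_{t_j}^t\langle\psi_j,\eta\rangle\,\d s$ to decay at rate at least $\kappa$ as $|t|\to\infty$. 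Once this is in place, the remaining freedom in the particular solution is exactly $\sum_{j=1}^{n_0}c_j\varphi_j$; imposing $\langle\psi_j(0),\xi(0)\rangle=0$ for $j=1,\ldots,n_0$ together with the biorthogonality $\langle\psi_j(0),\varphi_k(0)\rangle=\delta_{jk}$ gives a triangular (indeed identity) linear system in $c_1,\ldots,c_{n_0}$, forcing $c_1=\cdots=c_{n_0}=0$ and yielding the unique solution claimed.
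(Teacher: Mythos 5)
The paper does not actually prove this lemma; it is imported verbatim as Theorem~4 of Gruendler~\cite{G92}, so there is no in-paper argument to compare against. Your proposal is the standard Fredholm-alternative proof that underlies Gruendler's result, and its structure is correct: necessity follows by applying Lemma~\ref{lem:m2} to a solution $z=\xi\in\Z^1$, sufficiency comes from variation of parameters in the adapted basis $(\varphi_j)$ with base points $t_j$ chosen by the growth category in \eqref{eqn:vphi1}--\eqref{eqn:psi} so that the only problematic modes $j=n_\s+1,\ldots,n_\s+n_0$ are rescued by the vanishing of $\int_{-\infty}^{\infty}\langle\psi_j,\eta\rangle\,\d t$, and uniqueness follows because $\langle\psi_k(0),\xi(0)\rangle=c_k$ once the bounded modes are anchored at $t_k=0$. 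One small caution on the "routine bookkeeping'': after absorbing the polynomial factors $t^{k_j}$ from Lemma~\ref{lem:m1} into a loss $\epsilon$ in the exponential rate, the product $\varphi_j(t)\int_{t_j}^{t}\langle\psi_j,\eta\rangle\,\d s$ decays at rate $\kappa-2\epsilon$ rather than exactly $\kappa$, so membership in the \emph{fixed}-$\kappa$ space $\Z^1$ needs either eigenvalues that are semisimple (so $k_j=0$, as holds generically under (A1)) or a slightly more careful two-rate argument as in Gruendler; your phrase "decay at rate at least $\kappa$'' is the one place the sketch glosses over a real, if minor, technical step.
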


Let
\[
\bar{\Z}^1=\{z\in\Z^1\,|\,\langle\psi_j(0),z(0)\rangle=0,j=1,\ldots,n_0\}
 \subset\Z^1
\]
As the kernel of a continuous linear map
 $\bar{\Z}^1$ is closed and hence it is also a Banach space.
We define a differentiable function
 $F:\bar{\Z}^1\times\Rset^{n_0-1}\times\Rset^m\rightarrow\Z^0$ as
\begin{align}
F(z;\alpha,\mu)=&
\dot{x}^\h(t)+\dot{z}(t)
 +\sum_{j=1}^{n_0-1}\alpha_j\dot{\varphi}_{j+1}(t)\notag\\
& -f\left(x^\h(t)+z(t)+\sum_{j=1}^{n_0-1}\alpha_j\varphi_{j+1}(t);\mu\right).
\label{eqn:F}
\end{align}
A solution $z\in\bar{\Z}^1$ to
\begin{equation}
F(z;\alpha,\mu)=0
\label{eqn:F0}
\end{equation}
for $(\alpha,\mu)$ fixed gives a homoclinic orbit to the origin.

Let $q:\Rset\rightarrow\Rset$ be a continuous function satisfying
\[
\sup_t|q(t)|e^{\kappa t}<\infty
\quad\mbox{and}\quad
\int_{-\infty}^\infty q(t)\d t=1.
\]
Define a projection $\Pi:\Z^0\rightarrow\Z^0$ by
\[
\Pi(z)(t)
=q(t)\sum_{j=1}^{n_0}\left(\int_{-\infty}^\infty
 \langle\psi_{n_\s+j}(\tau),z(\tau)\rangle\,d\tau\right)\varphi_{n_\s+j}(t).
\]
Condition (\ref{eqn:F0}) is divided into two parts:
\begin{equation}
(\id-\Pi)F(z;\alpha,\mu)=0
\label{eqn:F1}
\end{equation}
and
\begin{equation}
\Pi F(z;\alpha,\mu)=0,
\label{eqn:F2}
\end{equation}
where ``$\id$'' represents the identity.
Obviously,
\begin{equation}
(\id-\Pi)F(0;0,0)=0.
\label{eqn:F10}
\end{equation}
We easily see that
\begin{equation}
\int_{-\infty}^\infty
 \langle\psi_{n_\s+j}(t),(\id-\Pi)z(t)\rangle\,\d t=0,\quad
j=1,\ldots,n_0,
\label{eqn:id-Pi}
\end{equation}
for $z\in\Z^0$.
Hence, by Lemma~\ref{lem:m3},
 Eq.~(\ref{eqn:nhve}) has a unique solution in $\bar{\Z}_1$
 if $\eta$ is involved in the range of $(\id-\Pi)$, $\R(\id-\Pi)$.
This means that for $\eta\in\R(\id-\Pi)$
 there is a unique function $\xi(t)\in\bar{\Z}_1$ such that
\[
\D_z(\id-\Pi)F(0;0,0)\xi=\eta.
\]
Using this fact and (\ref{eqn:F10}),
 we apply the implicit function theorem (\eg, Theorem~2.3 of \cite{CH82})
 to show that there are a neighborhood $U$ of $(\alpha,\mu)=(0,0)$
 and a differentiable function $\bar{z}:U\rightarrow\bar{\Z}^1$
 such that $\bar{z}(0,0)=0$ and 
\begin{equation}
(\id-\Pi)F(\bar{z}(\alpha,\mu);\alpha,\mu)=0
\label{eqn:F1a}
\end{equation}
for $(\alpha,\mu)\in U$.

Let
\begin{equation}
\bar{F}_j(\alpha,\mu)=\int_{-\infty}^\infty
 \langle\psi_{n_\s+j}(t),F(\bar{z}(\alpha,\mu)(t);\alpha,\mu)\rangle\,\d t,\quad
j=1,\ldots,n_0.
\label{eqn:bF}
\end{equation}
If $\bar{F}(\alpha,\mu)
 \equiv(\bar{F}_1(\alpha,\mu),\ldots,\bar{F}_{n_0}(\alpha,\mu))=0$,
 then $z=\bar{z}(\alpha,\mu)$ satisfies (\ref{eqn:F1}) and (\ref{eqn:F2}).
Thus, we can prove the following theorem as in Theorem~5 of \cite{G92}.

\begin{thm}
\label{thm:m}
In a neighborhood $U$ of $(\alpha,\mu)=(0,0)$,
 Eq.~\eqref{eqn:sys} has a unique homoclinic orbit of the form \eqref{eqn:ho}
 to the origin $x=0$ if $\bar{F}(\alpha,\mu)=0$.
\end{thm}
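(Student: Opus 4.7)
The plan is to assemble the theorem from the pieces already in place. The equation $F(z;\alpha,\mu)=0$ was split via the projection $\Pi$ into the two conditions $(\id-\Pi)F=0$ and $\Pi F=0$, and the implicit-function-theorem argument leading to \eqref{eqn:F1a} already supplies a differentiable map $\bar z:U\to\bar\Z^1$ with $\bar z(0,0)=0$ that solves the first condition for all $(\alpha,\mu)$ in a neighborhood of the origin. Hence the task reduces to showing that the additional assumption $\bar F(\alpha,\mu)=0$ forces the second condition, and then to recognizing the resulting $z=\bar z(\alpha,\mu)$ as a homoclinic orbit of the form \eqref{eqn:ho}.

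The first step is almost by definition: the formula for $\Pi$ gives
\[
\Pi F(\bar z(\alpha,\mu);\alpha,\mu)(t)
 =q(t)\sum_{j=1}^{n_0}\bar F_j(\alpha,\mu)\,\varphi_{n_\s+j}(t),
\]
so $\bar F(\alpha,\mu)=0$ implies $\Pi F(\bar z(\alpha,\mu);\alpha,\mu)=0$, which combined with \eqref{eqn:F1a} yields $F(\bar z(\alpha,\mu);\alpha,\mu)=0$. Unpacking the definition \eqref{eqn:F}, this is precisely the statement that the curve
\[
x(t)=x^\h(t)+\bar z(\alpha,\mu)(t)+\sum_{j=1}^{n_0-1}\alpha_j\varphi_{j+1}(t)
\]
solves $\dot x=f(x;\mu)$. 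To confirm that $x(t)$ is a homoclinic orbit to the origin, I would use the exponential weight built into $\Z^1$ to conclude $\bar z(\alpha,\mu)(t)\to 0$ as $t\to\pm\infty$, together with $x^\h(t)\to 0$ and the two-sided decay of $\varphi_{j+1}$, $j=1,\dots,n_0-1$, supplied by \eqref{eqn:vphi1}.

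To match the expansion \eqref{eqn:ho} it then suffices to show $\bar z(\alpha,\mu)=\O(|\alpha|^2+|\mu|)$. Differentiating the identity $(\id-\Pi)F(\bar z(\alpha,\mu);\alpha,\mu)=0$ at $(\alpha,\mu)=(0,0)$ and using that each $\varphi_{j+1}$ solves the homogeneous VE, so that $\D_{\alpha_j}F(0;0,0)=\dot\varphi_{j+1}-\D_x f(x^\h;0)\varphi_{j+1}=0$, forces $\D_\alpha\bar z(0,0)=0$; the linear part of $\bar z$ in $\alpha$ therefore vanishes, and one obtains the bound $|\bar z(\alpha,\mu)|\lesssim|\alpha|^2+|\mu|\lesssim\sqrt{|\alpha|^4+|\mu|^2}$. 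Uniqueness of the orbit within the ansatz \eqref{eqn:ho} is inherited from the uniqueness clause of the implicit function theorem applied to $(\id-\Pi)F$. The main obstacle is not in any single step above but rather in trusting the underlying splitting: one must know that $(\id-\Pi)F$ really is a surjection onto $\R(\id-\Pi)$ with the orthogonality and decay properties needed for the implicit function theorem to apply, and this is exactly what Lemmas \ref{lem:m1}--\ref{lem:m3} together with the weighted spaces $\Z^0,\Z^1$ were set up to provide.
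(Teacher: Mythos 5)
Your proof is correct and reproduces the Lyapunov--Schmidt argument that the paper implicitly relies on when it refers to Theorem~5 of Gruendler \cite{G92} (the paper itself gives no proof, only the citation). You identify exactly the three points one must check: that $\Pi F(\bar z(\alpha,\mu);\alpha,\mu)=q(t)\sum_{j}\bar F_j(\alpha,\mu)\varphi_{n_\s+j}(t)$, so $\bar F=0$ together with \eqref{eqn:F1a} forces $F=0$; that membership of $\bar z(\alpha,\mu)$ in $\bar\Z^1$ and the two-sided decay of $\varphi_2,\dots,\varphi_{n_0}$ from \eqref{eqn:vphi1} yield the homoclinic boundary conditions; and that $\D_\alpha\bar z(0,0)=0$ (from differentiating \eqref{eqn:F1a} and invertibility of $\D_z(\id-\Pi)F(0;0,0)$ on $\R(\id-\Pi)$) gives the $\O(\sqrt{|\alpha|^4+|\mu|^2})$ remainder required by \eqref{eqn:ho}, with uniqueness inherited from the implicit function theorem.
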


Henceforth we apply Theorem~\ref{thm:m}
 to prove persistence and bifurcation theorems for homoclinic orbits
 in \eqref{eqn:sys} with $n\ge 2$.
The first two results (Theorems~\ref{thm:m1} and \ref{thm:m2})
 are essentially the the same as Corollaries~7 and 8 of \cite{G92}
 but they are included with proofs
 for our unified presentation and the reader's convenience.
We will also need information on higher-order derivatives of $\bar{F}$
 up to third-order at $(\alpha,\mu)=(0,0)$,
 some of which were not required in \cite{G92}.
 
\subsection{General case}

\begin{figure}[t]
\begin{center}
\includegraphics[scale=0.7]{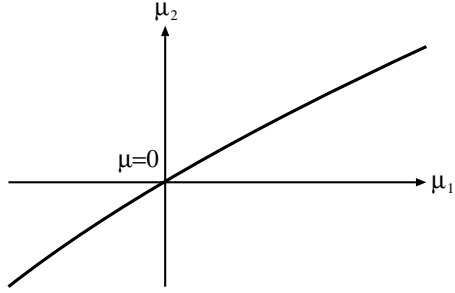}
\caption{Persistence of homoclinic orbits}
\label{fig:m1}
\end{center}
\end{figure}

Let $m=2$ and assume that $n_0=1$,
 which means that condition~(C) does not hold when $n=4$.
Then $\bar{F}$ is independent of $\alpha$.
Define two constants $a_{11},a_{12}\in\Rset$ as
\begin{equation}
a_{1j}=\int_{-\infty}^{\infty}
 \langle\psi_{n_\s+1}(t),\D_{\mu_j}f(x^\h(t);0)\rangle\,\d t.
\label{eqn:a}
\end{equation}

\begin{thm}
\label{thm:m1}
Under assumptions (M1)-(M3)
 let $n_0=1$ and suppose that $a_1=(a_{11},a_{12})\neq(0,0)$.
Then for some open interval $I$ including $0$,
 there exists a differentiable function
 $\phi_j:I\rightarrow\Rset$ with $\phi_j(0)=0$, $j=1$ or 2,
 such that a homoclinic orbit exists
 for $\mu_1=\phi_1(\mu_2)$ or $\mu_2=\phi_2(\mu_1)$ (see Fig.~\ref{fig:m1}).
\end{thm}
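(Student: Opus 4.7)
The plan is to apply the implicit function theorem directly to the bifurcation equation $\bar F(\alpha,\mu)=0$ of Theorem~\ref{thm:m}. Since $n_0=1$, the parameter $\alpha$ lives in $\Rset^{n_0-1}=\Rset^0$ and is absent, so $\bar F$ reduces to a single scalar-valued function $\bar F:U\subset\Rset^2\rightarrow\Rset$. First one checks that $\bar F(0)=0$, which is immediate because $\bar z(0,0)=0$ and $F(0;0,0)(t)=\dot x^\h(t)-f(x^\h(t);0)\equiv 0$ since $x^\h(t)$ solves \eqref{eqn:sys} at $\mu=0$.

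The next step is to compute $\partial_{\mu_j}\bar F(0)$ for $j=1,2$. Differentiating \eqref{eqn:bF} under the integral and applying the chain rule gives
\[
\frac{\partial\bar F}{\partial\mu_j}(0)=\int_{-\infty}^\infty\bigl\langle\psi_{n_\s+1}(t),\D_z F(0;0,0)\zeta_j(t)+\D_{\mu_j}F(0;0,0)(t)\bigr\rangle\,\d t,
\]
where $\zeta_j:=\partial_{\mu_j}\bar z(0,0)\in\bar{\Z}^1$ by the differentiability of $\bar z$ provided by the implicit function theorem step following \eqref{eqn:F1a}. Reading off \eqref{eqn:F} shows that $\D_z F(0;0,0)\zeta_j=\dot\zeta_j-\D_x f(x^\h(t);0)\zeta_j$ and $\D_{\mu_j}F(0;0,0)=-\D_{\mu_j}f(x^\h(t);0)$, so Lemma~\ref{lem:m2} kills the $\zeta_j$-term and yields $\partial_{\mu_j}\bar F(0)=-a_{1j}$.

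Because $(a_{11},a_{12})\neq(0,0)$, at least one partial derivative of $\bar F$ at the origin is nonzero, and the implicit function theorem applied to the scalar equation $\bar F(\mu_1,\mu_2)=0$ produces the desired differentiable function $\phi_j$ with $\phi_j(0)=0$, defined on some open interval $I\ni 0$ and solving the equation for whichever variable has nonzero partial derivative. Theorem~\ref{thm:m} then converts each zero of $\bar F$ on this curve into a homoclinic orbit of \eqref{eqn:sys}, yielding the assertion.

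The only delicate point is the justification of differentiation under the integral sign when computing $\partial_{\mu_j}\bar F(0)$. This requires uniform exponential control of the integrand in $t$, which follows from the decay of $\psi_{n_\s+1}(t)$ given by Lemma~\ref{lem:m1} together with \eqref{eqn:psi}, combined with the fact that $\bar z$ takes values in $\bar{\Z}^1$, whose elements decay like $\e^{-\kappa|t|}$. Beyond this bookkeeping, the argument is nothing more than a direct application of the implicit function theorem to a scalar bifurcation equation.
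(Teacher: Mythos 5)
Your argument is correct and is essentially the proof the paper gives: both reduce to computing the linear term of $\bar F$ at $\mu=0$, use Lemma~\ref{lem:m2} to annihilate the contribution from the implicit correction $\bar z$ (your $\zeta_j$), obtain $\D_\mu\bar F(0)=-a_1$, and then invoke the implicit function theorem together with Theorem~\ref{thm:m}. The paper phrases it as a first-order Taylor expansion of $\bar F_1(\mu)$ while you differentiate under the integral directly, but the content is identical, and your extra remark on uniform exponential control justifying the differentiation is a harmless (and welcome) bit of added rigor.
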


\begin{proof}
We simply write $\bar{z}=\bar{z}(\mu)$.
For $\mu\in\Rset^2$ we compute \eqref{eqn:bF} as
\begin{align*}
&
\bar{F}_1(\mu)\\
&
=\int_{-\infty}^\infty\langle\psi_{n_\s+1}(t),
 \dot{\bar{z}}(t)-\D_x f(x^\h(t);0){\bar{z}}(t)-\D_\mu f(x^\h(t);0)\mu\rangle\d t
 +\O(|\mu|^2)\\
&
=-a_1\mu+\O(|\mu|^2).
\end{align*}
Here we used the fact that $\bar{z}(0,0)=0$ and Lemma~\ref{lem:m2}.
Applying Theorem~\ref{thm:m}, we obtain the result.
\end{proof}

\begin{rmk}
\label{rmk:m1}
Theorem~\ref{thm:m1} implies that
 if condition~(C) does not hold for $n=4$,
 then the homoclinic orbit $x^\h(t)$ persists, \ie, no bifurcation occurs,
 since $n_0=1$, as stated in Section~1.  
\end{rmk}

\begin{figure}[t]
\begin{center}
\includegraphics[scale=0.7]{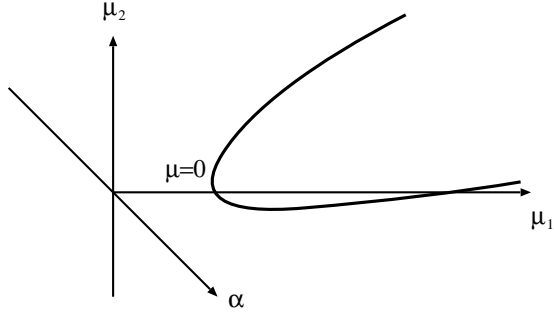}
\caption{Saddle-node bifurcation of homoclinic orbits}
\label{fig:m2}
\end{center}
\end{figure}

We next assume that $n_0=2$,
 which means that condition~(C) holds when $n=4$.
For $\mu\in\Rset^2$ define constant vectors $a_j\in\Rset^2$
 and constants $b_j\in\Rset$, $j=1,2$, as
\begin{equation}
\begin{split}
&
a_j\mu=\int_{-\infty}^{\infty}
 \langle\psi_{n_\s+j}(t),\D_\mu f(x^\h(t);0)\mu\rangle\,\d t,\\
&
b_j=\half\int_{-\infty}^{\infty}
 \langle\psi_{n_\s+j}(t),\D_x^2 f(x^\h(t);0)(\varphi_2(t),\varphi_2(t))\rangle\,\d t.
\end{split}
\label{eqn:ab}
\end{equation}
Note that $a_1=(a_{11},a_{12})$, where $a_{1j}$, $j=1,2$, are given by \eqref{eqn:a},
 as in the statement of Theorem~\ref{thm:m1}.
 
\begin{thm}
\label{thm:m2}
Under assumptions (M1)-(M3),
 let $n_0=2$ and suppose that
 the $2\times 2$ matrix $(a_1^*,a_2^*)$ is nonsingular and $(b_1,b_2)\neq(0,0)$.
Then for some open interval $I$ including $0$,
 there exists a differentiable function $\phi:I\rightarrow\Rset^2$
 with $\phi(0)=0$, $\phi'(0)=0$ and $\phi''(0)\neq 0$,
 such that a homoclinic orbit exists for $\mu=\phi(\alpha)$,
 \ie, a saddle-node bifurcation of homoclinic orbits occurs at $\mu=0$
 (see Fig.~\ref{fig:m2}).
\end{thm}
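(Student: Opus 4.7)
The plan is to Taylor-expand the reduced bifurcation function $\bar{F}(\alpha,\mu)$ from \eqref{eqn:bF} at the origin through second order and then solve $\bar{F}(\alpha,\mu) = 0$ for $\mu$ as a function of $\alpha$ via the implicit function theorem. With $n_0 = 2$ we have $\alpha \in \Rset$ and $\bar{F} : \Rset \times \Rset^2 \to \Rset^2$; nonsingularity of $(a_1^*, a_2^*)$ will furnish the IFT hypothesis on $\D_\mu \bar{F}(0,0)$, while $(b_1, b_2) \neq (0,0)$ will force $\phi''(0) \neq 0$, producing the parabolic tangency characteristic of a saddle-node bifurcation.

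First I would establish that $\bar{F}(0,0) = 0$, $\partial_\alpha \bar{F}(0,0) = 0$, and $\D_\mu \bar{F}(0,0) = -A$, where $A$ is the $2 \times 2$ matrix whose rows are $a_1, a_2$ (so that $A^* = (a_1^*, a_2^*)$ is precisely the matrix hypothesized to be nonsingular). The workhorse throughout is Lemma~\ref{lem:m2}: for every $z \in \bar{\Z}^1$, $\int_{-\infty}^\infty \langle \psi_{n_\s+j}(t), \dot{z}(t) - \D_x f(x^\h(t);0) z(t)\rangle \, \d t = 0$. Since $\D_z F(0;0,0)\cdot\xi = \dot{\xi} - \D_x f(x^\h;0)\xi$, every chain-rule contribution that passes through a derivative of $\bar{z}(\alpha,\mu)$ at leading order is integrated against $\psi_{n_\s+j}$ to zero. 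Consequently the partial derivatives of $\bar{F}$ at the origin are obtained by formally setting $z = 0$ inside $F$, just as in the proof of Theorem~\ref{thm:m1}. Because $\varphi_2$ solves the VE one has $\partial_\alpha F(0;0,0) = \dot{\varphi}_2 - \D_x f(x^\h;0)\varphi_2 = 0$, hence $\partial_\alpha \bar{F}(0,0) = 0$; the $\mu$-derivative gives $\partial_{\mu_i}\bar{F}_j(0,0) = -a_{ji}$ directly from~\eqref{eqn:ab}.

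Next I would compute $\partial_\alpha^2 \bar{F}_j(0,0)$. Differentiating the implicit identity \eqref{eqn:F1a} once in $\alpha$ at the origin, and using that the linear map $\xi \mapsto (\id - \Pi)[\dot{\xi} - \D_x f(x^\h;0)\xi]$ is an isomorphism $\bar{\Z}^1 \to (\id - \Pi)(\Z^0)$ (surjectivity by Lemma~\ref{lem:m3}; injectivity because any bounded VE solution $\xi = \sum c_k \varphi_k$ satisfying $\langle \psi_k(0), \xi(0) \rangle = 0$ for $k=1,\ldots,n_0$ forces $c_k = 0$ by biorthogonality), one obtains $\partial_\alpha \bar{z}(0,0) = 0$. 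The only surviving piece of $\partial_\alpha^2 \bar{F}_j(0,0)$ is then $\int \langle \psi_{n_\s+j}, \partial_\alpha^2 F(0;0,0)\rangle \, \d t$ with $\partial_\alpha^2 F(0;0,0) = -\D_x^2 f(x^\h;0)(\varphi_2, \varphi_2)$, which by the definition of $b_j$ equals $-2 b_j$. Thus
\[
\bar{F}(\alpha,\mu) = -A\mu - \alpha^2 b + \O\bigl(|\alpha|^3 + |\alpha|\,|\mu| + |\mu|^2\bigr),
\qquad b := (b_1, b_2)^*.
\]

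Finally, since $A$ is nonsingular, the implicit function theorem applied to $\bar{F} = 0$ yields a differentiable $\phi : I \to \Rset^2$ with $\phi(0) = 0$ and $\bar{F}(\alpha, \phi(\alpha)) \equiv 0$. Substituting the expansion above gives $A\phi(\alpha) = -\alpha^2 b + \O(|\alpha|^3)$, hence $\phi(\alpha) = -\alpha^2 A^{-1} b + \O(|\alpha|^3)$, so $\phi'(0) = 0$ and $\phi''(0) = -2A^{-1} b \neq 0$ since $b \neq 0$. Applying Theorem~\ref{thm:m} to each pair $(\alpha, \phi(\alpha))$ then delivers the claimed family of homoclinic orbits. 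The main bookkeeping hurdle is to confirm that every $\bar{z}$-derivative contribution really drops out of $\bar{F}$ up through order $\alpha^2$; Lemma~\ref{lem:m2} accomplishes this in one stroke, after which the remaining argument is the standard Lyapunov--Schmidt pattern for a fold bifurcation.
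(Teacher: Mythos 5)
Your proof is correct and follows essentially the same route as the paper: both use Lemma~\ref{lem:m2} together with the implicit identity \eqref{eqn:F1a} and Lemma~\ref{lem:m3} to show $\D_\alpha\bar{z}(0,0)=0$, then expand $\bar{F}_j(\alpha,\mu)=-a_j\mu-b_j\alpha^2+\text{h.o.t.}$ and close with the implicit function theorem and Theorem~\ref{thm:m}. One small observation: your computation gives $\phi''(0)=-2A^{-1}b$, whereas the paper records $\bar{\phi}''(0)=-A^{-1}b$; the missing factor of $2$ appears to be a harmless slip in the paper, and in any case the conclusion $\phi''(0)\neq 0$ is unaffected.
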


\begin{proof}
Differentiating \eqref{eqn:F1a} with respect to $\alpha$
 and using Lemma~\ref{lem:m2}, we have
\[
\D_\alpha(\id-\Pi)F(\bar{z};0;0)
 =\frac{\d}{\d t}\D_\alpha\bar{z}-\D_x f(x^\h(t);0)\D_\alpha\bar{z}=0
\]
at $(\alpha,\mu)=(0,0)$, \ie,
 $\D_\alpha\bar{z}(0;0)(t)$
 is a solution of \eqref{eqn:ve},
 so that $\D_\alpha\bar{z}(0;0)(t)=0$ by Lemma~\ref{lem:m3}.  
Using this fact and Lemma~\ref{lem:m2}, we compute \eqref{eqn:bF} as
\begin{align*}
\bar{F}_j(\alpha,\mu)
=&\int_{-\infty}^\infty\langle\psi_{n_\s+j}(t),-\D_\mu f(x^\h(t);0)\mu\\
& \qquad
 -\half\alpha^2\D_x^2 f(x^\h(t);0)(\varphi_2(t),\varphi_2(t))\rangle\d t
 +\O(\sqrt{\alpha^6+|\mu|^4})\\
=&-a_j\mu-b_j\alpha^2+\O(\sqrt{\alpha^6+|\mu|^4}),
\end{align*}
as in the proof of Theorem~\ref{thm:m1}.
Since $\bar{F}(0,0)=0$ and $|\D_\mu\bar{F}(0,0)|\neq 0$,
 we apply the implicit function theorem to show that
 there exist an open interval $I\ni 0$
 and a differentiable function $\bar{\phi}:I\rightarrow\Rset^2$
 such that $\bar{F}(\bar{\phi}(\alpha),\alpha)=0$ for $\alpha\in I$
 with $\bar{\phi}(0)=0$, $\bar{\phi}'(0)=0$ and 
\[
\bar{\phi}''(0)=-
\begin{pmatrix}
a_1\\
a_2
\end{pmatrix}^{-1}\!\!\!
\begin{pmatrix}
b_1\\
b_2
\end{pmatrix}.
\]
This implies the result along with Theorem~\ref{thm:m}.
\end{proof}

\subsection{$\Zset_2$-equivalent case}

We next consider the $\Zset_2$-equivalent case for $n_0=2$,
 and assume the following.
\begin{enumerate}
\item[\bf (M4)]
Eq.~(\ref{eqn:sys}) is {\em $\Zset_2$-equivalent}, \ie,
 there exists an $n\times n$ matrix $S$
 such that $S^2=\id_n$ and $Sf(x;\mu)=f(Sx;\mu)$.
\end{enumerate}
It follows from assumption (M4) that
 if $x=\bar{x}(t)$ is a solution to (\ref{eqn:sys}),
 then $x=S\bar{x}(t)$ is so.
We say that
 the pair $\bar{x}(t)$ and $S\bar{x}(t)$ are {\em $S$-conjugate}
 if $\bar{x}(t)\neq S\bar{x}(t)$.
See, \eg, Section~7.4 of \cite{K98}
 for more details on $\Zset_2$-equivalent systems.
In particular,
 the space $\Rset^n$ can be decomposed into a direct sum,
\[
\Rset^n=X^+\oplus X^-,
\]
where $Sx=x$ for $x\in X^+$ and $Sx=-x$ for $x\in X^-$.
Under assumption~(M4), if $x^\h(t)\in X^+$ for any $t\in\Rset$,
 then Eq.~(\ref{eqn:ve}) is also $\Zset_2$-equivalent about $\xi$
 since $S\D_x f(x;\mu)=\D_x f(Sx;\mu)S$ in general.
Here we need the following assumption.
\begin{enumerate}
\item[\bf (M5)]
For every $t\in\Rset$,
 $x^\h(t),\psi_{n_\s+1}(t)\in X^+$
 and $\varphi_2(t),\psi_{n_\s+2}(t)\in X^-$.
\end{enumerate}
Assumption~(M5) also means that $\varphi_1(t)\in X^+$.
Moreover,
 a homoclinic orbit of the form (\ref{eqn:ho}) has an $S$-conjugate counterpart
 for $\alpha\neq 0$ since it is not included in $X^+$.
Actually,
 we cannot apply Theorem~\ref{thm:m2} in this case as follows.

\begin{lem}
\label{lem:m4}
Under assumptions (M1)-(M5), we have
\[
\D_\mu f(x^\h(t);0)\mu,
 \D_x^2 f(x^\h(t);0)(\varphi_2(t),\varphi_2(t))\in X^+
\]
and
\[
\D_\mu\D_x f(x^\h(t);0)(\mu,\varphi_2(t)),
 \D_x^3 f(x^\h(t);0)(\varphi_2(t),\varphi_2(t),\varphi_2(t))\in X^-
\]
for any $t\in\Rset$ and $\mu\in\Rset^2$.
In particular, $a_2=0$ and $b_2=0$.
\end{lem}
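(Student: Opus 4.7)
The plan is to exploit the equivariance identity $Sf(x;\mu)=f(Sx;\mu)$ together with the $S$-invariance of the base orbit coming from (M5). Differentiating the equivariance identity $k$ times in $x$ and $\ell$ times in $\mu$ yields the master formula
\[
S\,\D_x^k\D_\mu^\ell f(x;\mu)(\xi_1,\ldots,\xi_k;\mu_1,\ldots,\mu_\ell)
=\D_x^k\D_\mu^\ell f(Sx;\mu)(S\xi_1,\ldots,S\xi_k;\mu_1,\ldots,\mu_\ell),
\]
from which all four parity assertions drop out. The special cases I need are $(k,\ell)=(0,1),(2,0),(1,1),(3,0)$.

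Next, since $x^\h(t)\in X^+$ by (M5) we have $Sx^\h(t)=x^\h(t)$, so evaluating at $x=x^\h(t)$ collapses the right-hand side to the same base point. Substituting the $x$-direction inputs $\xi_j=\varphi_2(t)\in X^-$ then pulls a sign $(-1)^k$ out, so each of the four expressions in the statement is an eigenvector of $S$ with eigenvalue determined by the parity of the number of $\varphi_2$-inputs. An even number of $\varphi_2$-slots (the $\D_\mu f$ and $\D_x^2 f(\varphi_2,\varphi_2)$ terms) gives eigenvalue $+1$, placing these in $X^+$; an odd number (the mixed $\D_\mu\D_x f(\mu,\varphi_2)$ term and the triple $\D_x^3 f(\varphi_2,\varphi_2,\varphi_2)$ term) gives eigenvalue $-1$, placing these in $X^-$.

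For the vanishing of $a_2$ and $b_2$, I would invoke that $X^+$ and $X^-$ are orthogonal with respect to $\langle\cdot,\cdot\rangle$, which is standard for a $\Zset_2$-equivariant system (one takes $S$ orthogonal, so that $\langle u,v\rangle=\langle Su,Sv\rangle=-\langle u,v\rangle$ when $u\in X^+$, $v\in X^-$) and is already consistent with (M5) together with the duality $\langle\psi_j,\varphi_k\rangle=\delta_{jk}$. Since $\psi_{n_\s+2}(t)\in X^-$, while the two quantities $\D_\mu f(x^\h(t);0)\mu$ and $\D_x^2 f(x^\h(t);0)(\varphi_2(t),\varphi_2(t))$ just shown to lie in $X^+$ are paired against it in the integrals defining $a_2\mu$ and $b_2$, the integrands vanish pointwise, yielding $a_2\mu=0$ for every $\mu\in\Rset^2$ (so $a_2=0$) and $b_2=0$.

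The main obstacle, if any, is the orthogonality $X^+\perp X^-$ needed to convert the parity information into the vanishing of the inner products; once this is noted, the remainder is a routine chain of differentiations of the equivariance relation and parity bookkeeping.
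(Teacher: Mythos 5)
Your proof is correct and follows essentially the same route as the paper: differentiate the equivariance identity $Sf(x;\mu)=f(Sx;\mu)$, evaluate at $x=x^\h(t)\in X^+$ and insert $\varphi_2(t)\in X^-$, and read off the parity $(-1)^k$ from the number of $\varphi_2$-slots. The one place where you go beyond the paper is in flagging the orthogonality $X^+\perp X^-$ needed to pass from the parity classification to $a_2=b_2=0$; the paper's proof compresses this into ``which implies the statement,'' tacitly assuming $S$ symmetric (indeed $S^2=\id_n$ alone does not force $X^+\perp X^-$, but it holds for the paper's $S=\diag(1,-1,1,-1)$ and is the standard convention). Making this hidden hypothesis explicit is a genuine, if minor, sharpening of the argument.
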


\begin{proof}
By the $\Zset_2$-equivalence of (\ref{eqn:sys}) we compute
\[
\D_\mu f(x;0)=S\,\D_\mu f(Sx;0),\quad
\D_x^2 f(x;0)(\xi,\xi)=S\,\D_x^2 f(Sx;0)(S\xi,S\xi)
\]
and
\begin{align*}
& \D_x\D_\mu f(x;0)(\xi,\mu)
 =S\,\D_x\D_\mu f(Sx;0)(S\xi,\mu),\\
& \D_x^3 f(x;0)(\xi,\xi,\xi)
 =S\,\D_x^2 f(Sx;0)(S\xi,S\xi,S\xi).
\end{align*}
Substituting $u=x^\h(t)$ and $v=\varphi_2(t)$ into the above relations
 and using the condition that $x^\h(t)\in X^+$ and $\varphi_2(t)\in X^-$,
 we have
\begin{align*}
& \D_\mu f(x^\h(t);0)=S\D_\mu f(x^\h(t);0),\\
& \D_x^2 f(x^\h(t);0)(\varphi_2(t),\varphi_2(t))
 =S\,\D_x^2 f(x^\h(t);0)(\varphi_2(t),\varphi_2(t))
\end{align*}
and
\begin{align*}
& \D_x\D_\mu f(x^\h(t);0)(\varphi_2(t),\mu)
 =-S\D_\mu f(x^\h(t);0)(\varphi_2(t),\mu),\\
& \D_x^3 f(x^\h(t);0)(\varphi_2(t),\varphi_2(t),\varphi_2(t))
 =-S\,\D_x^2 f(x^\h(t);0)(\varphi_2(t),\varphi_2(t),\varphi_2(t)),
\end{align*}
which implies the statement.
\end{proof}

Let $\xi=\bar{\xi}^\mu(t)$ and $\bar{\xi}^\alpha(t)$ be,
 respectively, unique solutions to
\[
\dot{\xi}=\D_x f(x^\h(t);0)\xi
 +(\id-\Pi)\D_\mu f(x^\h(t);0)\mu
\]
and
\[
\dot{\xi}=\D_x f(x^\h(t);0)\xi
 +\half(\id-\Pi)\D_x^2 f(x^\h(t);0)(\varphi_2(t),\varphi_2(t))
\]
in $\bar{\Z}^1$.
It is guaranteed by Lemma~\ref{lem:m3} and \eqref{eqn:id-Pi}
 that these solutions exist.
Denote
\[
\bar{G}_\mu(t)=\D_\mu f(x^\h(t);0)\mu,\quad
\bar{G}_\alpha(t)=\half\D_x^2 f(x^\h(t);0)(\varphi_2(t),\varphi_2(t)).
\]
Using the fundamental matrices to \eqref{eqn:ve} and \eqref{eqn:ave}, 
 $\Phi(t)$ and $\Psi(t)$, and noting that $\Psi(t)=(\Phi^*(t))^{-1}$, we obtain
\[
\bar{\xi}^{\mu,\alpha}(t)
 =\Phi(t)\left[\int_0^t\Psi^*(\tau)(\id-\Pi)\bar{G}_{\mu,\alpha}(\tau)\,\d\tau
 +\Psi^*(0)\bar{\xi}_0^{\mu,\alpha}\right],
\]
where $\bar{\xi}_0^{\mu,\alpha}\in\Rset^n$ are constant vectors
 such that $\langle\psi_j(0),\bar{\xi}_0^{\mu,\alpha}\rangle=0$, $j=1,2$,
 and $\bar{\xi}^{\mu,\alpha}(t)$ are bounded on $(-\infty,\infty)$, \ie,
 one can write $\bar{\xi}_0^{\mu,\alpha}=-\Phi(0)\bar{\Xi}^{\mu,\alpha}$
 with $\bar{\Xi}^{\mu,\alpha}
 =(\bar{\Xi}_1^{\mu,\alpha},\ldots,\bar{\Xi}_n^{\mu,\alpha})$ given by
\[
\bar{\Xi}_j^{\mu,\alpha}=
\begin{cases}
\displaystyle
\int_0^{-\infty}\langle\psi_j(t),(\id-\Pi)\bar{G}_{\mu,\alpha}(t)\rangle\,\d t
 & \mbox{for $j=3,\ldots,n_\s$;}\\[2ex]
\displaystyle
\int_0^{\infty}\langle\psi_j(t),(\id-\Pi)\bar{G}_{\mu,\alpha}(t)\rangle\,\d t
 & \mbox{for $j=n_\s+3,\ldots,n$;}\\[2ex]
0 & \mbox{otherwise.}
\end{cases}
\]
Let
\begin{equation}
\begin{split}
\bar{a}_2\mu =&
\int_{-\infty}^{\infty}
 \langle\psi_{n_\s+2}(t),
 \D_\mu\D_x f(x^\h(t);0)(\mu,\varphi_2(t))\\
& \qquad
 +\D_x^2 f(x^\h(t);0)(\bar{\xi}^\mu(t),\varphi_2(t))\rangle\,\d t,\\
\bar{b}_2 =& \int_{-\infty}^{\infty}
 \langle\psi_{n_\s+2}(t),
 \sixth\D_x^3 f(x^\h(t);0)(\varphi_2(t),\varphi_2(t),\varphi_2(t))\\
&\qquad
 +\D_x^2 f(x^\h(t);0)(\bar{\xi}^\alpha(t),\varphi_2(t))\rangle\,\d t.
\end{split}
\label{eqn:barab}
\end{equation}

\begin{figure}[t]
\begin{center}
\includegraphics[scale=0.7]{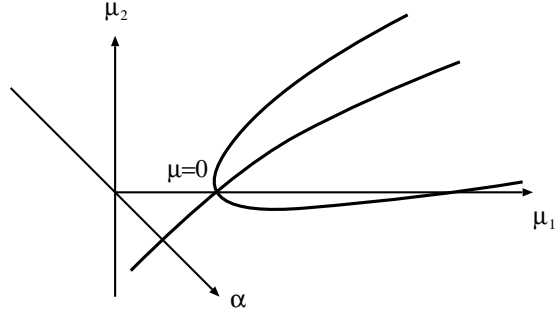}
\caption{Pitchfork bifurcation of homoclinic orbits}
\label{fig:m3}
\end{center}
\end{figure}

\begin{thm}
\label{thm:m3}
Under assumptions (M1)-(M5), suppose that condition~(C) holds,
 the $2\times 2$ matrix $(a_1^*,\bar{a}_2^*)$ is nonsingular
 and $(b_1,\bar{b}_2)\neq(0,0)$.
Then for some open interval $I\ni 0$
 there exist differentiable functions
 $\phi_j:I\rightarrow\Rset$ with $\phi_j(0)=0$, $j=1$ or 2,
 and $\phi:I\rightarrow\Rset^2$ with $\phi(0)=0$, $\phi''(0)\neq 0$
 and $\phi(\alpha)=\phi(-\alpha)$ for $\alpha\in I$, such that
 a homoclinic orbit exists on $X^+$
 for $\mu_1=\phi_1(\mu_2)$ or $\mu_2=\phi_2(\mu_1)$;
 and an $S$-conjugate pair of homoclinic orbits exist for $\mu=\phi(\alpha)$:
 a pitchfork bifurcation of homoclinic orbits occurs (see Fig.~\ref{fig:m3}).
\end{thm}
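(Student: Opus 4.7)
The proof mirrors that of Theorem~\ref{thm:m2}: we apply the implicit function theorem to $\bar{F}(\alpha,\mu)=0$ and then invoke Theorem~\ref{thm:m}, but the $\Zset_2$-equivariance forces $\bar{F}_2$ to be odd in $\alpha$, so that after factoring out $\alpha$ a second, asymmetric branch of solutions appears. Concretely, under (M4)--(M5) one verifies the functional identity
\[
F(Sz;-\alpha,\mu) = S\,F(z;\alpha,\mu)
\]
using $Sx^\h=x^\h$ and $S\varphi_2=-\varphi_2$. Choosing coordinates in which $S$ is orthogonal with respect to $\langle\cdot,\cdot\rangle$, the projection $\Pi$ intertwines the involution $(z,\alpha)\mapsto(Sz,-\alpha)$, so the uniqueness clause of the implicit function theorem applied to $(\id-\Pi)F=0$ yields $\bar{z}(-\alpha,\mu)=S\,\bar{z}(\alpha,\mu)$. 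Because $\psi_{n_\s+1}\in X^+$ and $\psi_{n_\s+2}\in X^-$, this gives $\bar{F}_1(-\alpha,\mu)=\bar{F}_1(\alpha,\mu)$ and $\bar{F}_2(-\alpha,\mu)=-\bar{F}_2(\alpha,\mu)$.

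The next step is a third-order Taylor expansion of $\bar{F}$ at $(0,0)$. Differentiating \eqref{eqn:F1a} once in $\mu$ and twice in $\alpha$, and invoking Lemma~\ref{lem:m3} for uniqueness in $\bar{\Z}^1$, one identifies $\D_\mu\bar{z}(0,0)\mu=\bar{\xi}^\mu$ and $\half\D_\alpha^2\bar{z}(0,0)=\bar{\xi}^\alpha$, while $\D_\alpha\bar{z}(0,0)=0$ as in the proof of Theorem~\ref{thm:m2}. Expanding $F$, using Lemma~\ref{lem:m2} to discard $\dot z-\D_x f\,z$ contributions and Lemma~\ref{lem:m4} to kill the $\mu$ and $\alpha^2$ terms of $\bar{F}_2$, one arrives at
\[
\bar{F}_1(\alpha,\mu) = -a_1\mu - b_1\alpha^2 + \O\bigl(\sqrt{\alpha^6+|\mu|^4}\bigr),\qquad
\bar{F}_2(\alpha,\mu) = -\alpha\bigl(\bar{a}_2\mu + \bar{b}_2\alpha^2\bigr) + \O\bigl(\alpha^5+\alpha|\mu|^2\bigr),
\]
with $\bar{a}_2,\bar{b}_2$ as in \eqref{eqn:barab}; the oddness of $\bar{F}_2$ guarantees a clean factorization by $\alpha$.

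The equation $\bar{F}(\alpha,\mu)=0$ now decouples into two branches. On the symmetric branch $\alpha=0$, the scalar equation $\bar{F}_1(0,\mu)=-a_1\mu+\O(|\mu|^2)=0$ is solved by the implicit function theorem since $a_1\neq 0$ (as one row of the nonsingular matrix $(a_1^*,\bar{a}_2^*)$), yielding a curve $\mu_j=\phi_j(\mu_{3-j})$; the resulting orbit lies in $X^+$ by virtue of $\bar{z}(0,\mu)=S\bar{z}(0,\mu)$. On the asymmetric branch $\alpha\neq 0$, divide $\bar{F}_2$ by $\alpha$ and set $\beta=\alpha^2$: the resulting $2\times 2$ system in $(\beta,\mu)$ has a nonsingular $\mu$-Jacobian at the origin, so the implicit function theorem delivers $\mu=\hat\phi(\beta)$ with $\hat\phi(0)=0$ and $\hat\phi'(0)\neq 0$ by the hypothesis $(b_1,\bar{b}_2)\neq(0,0)$. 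Thus $\phi(\alpha):=\hat\phi(\alpha^2)$ is even, satisfies $\phi(0)=\phi'(0)=0$ and $\phi''(0)\neq 0$, and the orbits at $\pm\alpha$ are $S$-conjugate because the correction $\alpha\varphi_2+\cdots$ does not lie in $X^+$. Combining both branches with Theorem~\ref{thm:m} yields the pitchfork picture.

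The principal obstacle is the bookkeeping in the second step: Lemma~\ref{lem:m4} dispatches the order-two obstructions in $\bar{F}_2$, but the order-three coefficients $\bar{a}_2$ and $\bar{b}_2$ mix direct third derivatives of $f$ with feedback from $\bar{z}$ through $\bar{\xi}^\mu$ and $\bar{\xi}^\alpha$, so one must verify that precisely the combinations \eqref{eqn:barab} appear and no others. With that calculation in hand, everything else is a symmetry-enhanced repetition of the implicit-function-theorem arguments already used for Theorems~\ref{thm:m1} and \ref{thm:m2}.
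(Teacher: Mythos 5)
Your proof is correct and follows the same basic strategy as the paper's (Taylor-expand the reduced bifurcation function $\bar{F}$ through third order, then apply the implicit function theorem plus Theorem~\ref{thm:m}), but you add one genuinely useful clarification that the paper leaves implicit. The paper deduces the vanishing of the $\mu$- and $\alpha^2$-coefficients of $\bar F_2$ from Lemma~\ref{lem:m4} and then simply writes down the cubic normal form $\bar F_2=\bar a_2\alpha\mu+\bar b_2\alpha^3+\cdots$, with the factorization of $\bar F_2$ by $\alpha$ (and hence the existence of the $\alpha=0$ branch to all orders) absorbed into the phrase ``we repeat arguments given in the proofs of Theorems~\ref{thm:m1} and \ref{thm:m2}.'' You instead prove the functional identity $F(Sz;-\alpha,\mu)=S\,F(z;\alpha,\mu)$ directly from (M4)--(M5), push it through the Lyapunov--Schmidt reduction to get $\bar z(-\alpha,\mu)=S\bar z(\alpha,\mu)$ and then $\bar F_1$ even and $\bar F_2$ odd in $\alpha$; the clean $\alpha$-factorization and the assertion that the $\alpha=0$ orbit lies in $X^+$ then follow structurally, not just to leading order. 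This buys a more robust treatment of the remainder terms and makes the dichotomy symmetric branch/asymmetric branch transparent before any Taylor coefficients are computed, at the cost of one small hypothesis you should state explicitly: the involution $S$ must be compatible with the inner product defining the $\psi_j$ (e.g.\ $S$ self-adjoint, so that $\Pi$ and the space $\bar{\Z}^1$ are $S$-invariant), which holds after normalizing coordinates so that $X^+\perp X^-$ and noting that (M5) together with $\varphi_1\in X^+$ forces the remaining basis vectors to split between $X^\pm$ as well.
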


\begin{proof}
As in the proof of Theorem~\ref{thm:m2},
 we use Lemma~\ref{lem:m2} to obtain
\begin{align*}
&
\D_\mu(\id-\Pi)F(\bar{z}_0;0;0)\\
&
=\frac{\d}{\d t}\D_\mu\bar{z}_0-\D_x f(x^\h(t);0)\D_\mu\bar{z}
 -(\id-\Pi)\D_\mu f(x^\h(t);0)=0,\\
&
\D_\alpha^2(\id-\Pi)F(\bar{z}_0;0;0)\\
&
=\frac{\d}{\d t}\D_\alpha^2\bar{z}_0-\D_x f(x^\h(t);0)\D_\alpha^2\bar{z}
 -\half(\id-\Pi)\D_x^2 f(x^\h(t);0)(\varphi_2(t),\varphi_2(t))=0,
\end{align*}
where $\bar{z}_0=\bar{z}(0,0)$.
Hence, we have $(\D_\mu\bar{z}_0)\mu=\bar{\xi}^\mu$
 and $\D_\alpha^2\bar{z}_0=\bar{\xi}^\alpha$.
Noting that $\D_\alpha\bar{z}_0=0$ and using Lemma~\ref{lem:m2},
 we compute \eqref{eqn:bF} as
\begin{align*}
&
\bar{F}_j(\alpha,\mu)\\
&
=\int_{-\infty}^\infty\langle\psi_{n_\s+j}(t),
 -\D_\mu f(x^\h(t);0)\mu-\alpha\D_\mu\D_x f(x^\h(t);0)(\mu,\varphi_2(t))\\
& \qquad
 -\half\alpha^2\D_x^2 f(x^\h(t);0)(\varphi_2(t),\varphi_2(t))
 -\sixth\alpha^3\D_x^3 f(x^\h(t);0)(\varphi_2(t),\varphi_2(t),\varphi_2(t))\\
& \qquad
 -\alpha\D_x^2 f(x^\h(t);0)(\D_\mu\bar{z}(t)\mu+\D_\alpha^2\bar{z}(t)\alpha^2,
 \varphi_2(t))\rangle\d t+\O(\sqrt{\alpha^8+|\mu|^4}),
\end{align*}
so that
\begin{align*}
&
\bar{F}_1(\alpha,\mu)=a_1\mu+b_1\alpha^2+O(\sqrt{\alpha^6+|\mu|^3}),\\
&
\bar{F}_2(\alpha,\mu)
 =\bar{a}_2\alpha\mu+\bar{b}_2\alpha^3+O(\sqrt{\alpha^8+|\mu|^4}).
\end{align*}
Applying Theorem~\ref{thm:m}, we see that a unique homoclinic orbits exists
 near $(\alpha,\mu)$ satisfying
\[
\alpha=0,\ a_1\mu=0\quad\mbox{or}\quad
\mu=-
\begin{pmatrix}
a_1\\
\bar{a}_2
\end{pmatrix}^{-1}\!\!\!
\begin{pmatrix}
b_1\\
\bar{b}_2
\end{pmatrix}
\alpha^2.
\]
Note that if there exists a homoclinic orbit which is not included in $X^+$,
 then an $S$-conjugate homoclinic orbit must exist.
Thus, we repeat arguments given in the proofs
 of Theorems~\ref{thm:m1} and \ref{thm:m2} to obtain the result.
\end{proof}

\begin{rmk}
\label{rmk:m2}
From Theorems~\ref{thm:m2} and \ref{thm:m3}
 we see that if condition~(C) holds for $n=4$,
 then a saddle-node or pitchfork bifurcation occurs
 under some nondegenerate condition, since $n_0=2$, as stated in Section~1.  
\end{rmk}

\begin{rmk}
\label{rmk:m3}
Suppose that $a_1\mu+b_1\alpha^2=0$.
Then
\[
\bar{G}(t;\mu,\alpha)=\D_\mu f(x^\h(t);0)\mu
 +\half\alpha^2\D_x^2 f(x^\h(t);0)(\varphi_2(t),\varphi_2(t))
\]
satisfies $\Pi \bar{G}(t;\mu,\alpha)=0$,
 so that $\bar{\xi}(t)=\bar{\xi}^\mu(t)+\alpha^2\bar{\xi}^\alpha(t)$
is represented as
\[
\bar{\xi}(t)
 =\Phi(t)\left[\int_0^t\Psi^*(\tau)\bar{G}(\tau;\mu,\alpha)\,\d\tau-\bar{\Xi}(\mu,\alpha)\right],
\]
where $\bar{\Xi}(\mu,\alpha)
 =(\bar{\Xi}_1(\mu,\alpha),\ldots,\bar{\Xi}_n(\mu,\alpha))$ is given by
\[
\bar{\Xi}_j(\mu,\alpha)=
\begin{cases}
\displaystyle
\int_0^{-\infty}\langle\psi_j(t),\bar{G}(t;\mu,\alpha)\rangle\,\d t
 & \mbox{for $j=3,\ldots,n_\s$;}\\[2ex]
\displaystyle
\int_0^{\infty}\langle\psi_j(t),\bar{G}(t;\mu,\alpha)\rangle\,\d t
 & \mbox{for $j=n_\s+3,\ldots,n$;}\\[2ex]
0 & \mbox{otherwise.}
\end{cases}
\]
Thus, we have
\begin{align}
\bar{a}_2\mu+\bar{b}_2\alpha^2
=\int_{-\infty}^{\infty}\langle\psi_{n_\s+2}(t),
& \D_\mu\D_x f(x^\h(t);0)(\mu,\varphi_2(t))\notag\\
& +\sixth\alpha^2\D_x^3 f(x^\h(t);0)(\varphi_2(t),\varphi_2(t),\varphi_2(t))\notag\\
& +\D_x^2 f(x^\h(t);0)(\bar{\xi}(t),\varphi_2(t))\rangle\,\d t.
\label{eqn:barab1}
\end{align}
\end{rmk}

\subsection{Hamiltonian case}
We finally consider the Hamiltonian case
 and set $n=2\bar{n}$, $\bar{n}\in\Nset$, and $m=1$.
We assume the following.
\begin{enumerate}
\item[\bf (M6)]
There exists an analytic function
 $H:\Rset^n\times\Rset\rightarrow\Rset$ such that
\[
f(x;\mu)=J_n(\D_x H(x;\mu))^*, 
\]
where $J$ is the $n\times n$ symplectic matrix
\[
J_n=
\begin{pmatrix}
0 & \id_{\bar{n}}\\
-\id_{\bar{n}} & 0
\end{pmatrix}.
\]
\end{enumerate}
Assumption~(M6) means that
 Eq.~\eqref{eqn:sys} becomes a Hamiltonian system

\begin{equation}
\dot{x}=J_n(\D_x H(x;\mu))^*
\label{eqn:Hsys}
\end{equation}
with a Hamiltonian function $H(x;\mu)$.
Under this assumption we have $n_\s=n_\u=\bar{n}$.
See, \eg, \cite{MHO08}
 for more details on Hamiltonian systems.
The $\bar{n}$-dimensional stable and unstable manifolds,
 $W^\s(0)$ and $W^\u(0)$, generically intersect
 along a one-dimensional curve, \ie, a homoclinic orbit,
 since they are included in an $(n-1)$-dimensional level set
 $\{x\in\Rset^n\,|\,H(x;\mu)=H(0;\mu)\}$.

For the Hamiltonian system \eqref{eqn:Hsys},
 the VE~\eqref{eqn:ve} and its adjoint equation \eqref{eqn:ave} become
\begin{equation}
\dot{\xi}=J_n\D_x^2H(x^\h(t);0)\xi
\label{eqn:Hve}
\end{equation}
and
\begin{equation}
\dot{\xi}=\D_x^2 H(x^\h(t);0)J_n\xi,
\label{eqn:Have}
\end{equation}
respectively, since $\D_x^2H(x;0)$ is a symmetric matrix and $J_n^*=-J_n$.
Using solutions of the VE~\eqref{eqn:Hve},
 we can easily obtain solutions of the adjoint equation \eqref{eqn:Have}
 as follows.

\begin{lem}
\label{lem:m5}
Let $\xi=\varphi(t)$ be a solution of \eqref{eqn:Hve}.
Then $\xi=J_n\varphi(t)$ is a solution of \eqref{eqn:Have}.
\end{lem}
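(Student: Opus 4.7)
The plan is to verify the claim by direct substitution. The only algebraic ingredient needed is the identity $J_n^{\,2}=-\id_n$, which is immediate from the block form of $J_n$ given in assumption~(M6). First I would differentiate $\xi(t)=J_n\varphi(t)$ and invoke the hypothesis that $\varphi$ satisfies the VE~\eqref{eqn:Hve}; this produces
\[
\dot\xi(t)=J_n\dot\varphi(t)=J_n^{\,2}\,\D_x^2H(x^\h(t);0)\varphi(t)=-\D_x^2H(x^\h(t);0)\varphi(t).
\]
Next I would evaluate the right-hand side of the adjoint equation~\eqref{eqn:Have} at $\xi=J_n\varphi(t)$, obtaining $\D_x^2H(x^\h(t);0)J_n^{\,2}\varphi(t)=-\D_x^2H(x^\h(t);0)\varphi(t)$ by the same identity. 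Comparing the two expressions completes the verification.

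There is no serious obstacle; the lemma is essentially a restatement of the classical fact that a Hamiltonian linear system and its adjoint are conjugate via $J_n$, made possible because $\D_x^2H(x;0)$ is symmetric and $J_n^*=-J_n$. Both of these properties were already used in deriving \eqref{eqn:Hve} and \eqref{eqn:Have} in the paragraph immediately preceding the lemma, so no additional structural input is required, and no analytic estimate or boundedness argument enters at this stage.
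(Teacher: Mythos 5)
Your proof is correct and matches the paper's own argument: both differentiate $J_n\varphi$, use $J_n^2=-\id_n$, and verify that the resulting expression equals the right-hand side of \eqref{eqn:Have} evaluated at $J_n\varphi$. No difference of substance.
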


\begin{proof}
Since $\xi=\varphi(t)$ satisfies \eqref{eqn:Hve}, we have
\[
\dot{\varphi}(t)=J_n\D_x^2H(x^\h(t);0)\varphi(t).
\]
We use the equality $J_n^2=-\id_n$ to modify the above equation as
\[
J_n\dot{\varphi}(t)=\D_x^2H(x^\h(t);0)J_n\left(J_n\varphi(t)\right),
\]
which means that $J_n\varphi(t)$  is a solution of \eqref{eqn:Have}.
\end{proof}

It is clear that these solutions are orthogonal
\[
\langle\varphi(t),J_n\varphi(t)\rangle=0.
\]
Hence, we take
\begin{equation}
\psi_{\bar{n}+j}(t)=-J_n\varphi_j(t),\quad
j=1,\ldots,n_0,
\label{eqn:psij}
\end{equation}
and in particular
\begin{equation}
\psi_{\bar{n}+1}(t)=\D_x H(x^\h(t);0)^*
\label{eqn:psi1}
\end{equation}
since $\varphi_1(t)=\dot{x}^\h(t)=J_n\D_x H(x^\h(t);0)^*$.

We now turn to the problem of bifurcations in \eqref{eqn:Hsys}. 
Introduce a new parameter $\nu_1$ to modify \eqref{eqn:Hsys} as
\begin{equation}
\dot{x}=J_n\D_x H(x;\mu)^*+\nu_1\D_x H(x;\mu)^*,
\label{eqn:sys1}
\end{equation}
which depends on a two-dimensional parameter vector $\nu=(\nu_1,\mu)\in\Rset^2$.

\begin{lem}
\label{lem:m6}
For $\nu_1\ne 0$,
 Eq.~\eqref{eqn:sys1} has no homoclinic orbit
 passing through the set $\{x\in\Rset^n\,|\,\D_x H(x;\mu)\neq 0\}$,
 especially in a neighborhood of $x^\h(t)$ near $\mu=0$. 
\end{lem}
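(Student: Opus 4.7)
The plan is to exploit $H$ as a Lyapunov-like function for the modified system. Along any solution $x(t)$ of \eqref{eqn:sys1}, I would compute
\[
\frac{\d}{\d t}H(x(t);\mu)
 =\D_x H(x(t);\mu)\dot{x}(t)
 =\D_x H\,J_n(\D_x H)^*+\nu_1\D_x H(\D_x H)^*.
\]
The first term vanishes by the antisymmetry of $J_n$ (since $v^*J_n v=0$ for all $v\in\Rset^n$), so
\[
\frac{\d}{\d t}H(x(t);\mu)=\nu_1\,|\D_x H(x(t);\mu)|^2.
\]

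Now suppose, for contradiction, that $x(t)$ is a homoclinic orbit to the origin for \eqref{eqn:sys1} with $\nu_1\neq 0$, and that there exists $t_0\in\Rset$ with $\D_x H(x(t_0);\mu)\neq 0$. Then $x(t)\to 0$ as $t\to\pm\infty$, so by continuity of $H$,
\[
\int_{-\infty}^{\infty}\frac{\d}{\d t}H(x(t);\mu)\,\d t
 =H(0;\mu)-H(0;\mu)=0.
\]
On the other hand, by continuity of $\D_x H\circ x(\cdot)$, the integrand $\nu_1|\D_x H(x(t);\mu)|^2$ is nonzero (with fixed sign equal to that of $\nu_1$) on an open neighborhood of $t_0$ and nonnegative (or nonpositive) everywhere. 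Hence the integral is strictly of the sign of $\nu_1$, contradicting the previous identity. This proves the first assertion.

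For the ``especially'' clause, I would note that $\dot{x}^\h(t)=J_n\D_x H(x^\h(t);0)^*$ is a nonzero curve in $\Rset^n$ (otherwise $x^\h(t)$ would be constant), so there is some $t_0$ where $\D_x H(x^\h(t_0);0)\neq 0$; by openness and continuity in $\mu$, any curve sufficiently close to $x^\h$ with $\mu$ near $0$ will still meet the open set $\{x\mid\D_x H(x;\mu)\neq 0\}$, and the previous argument applies.

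The proof is essentially a one-line computation; the only mild subtlety is verifying that $x^\h$ itself meets $\{\D_x H(\cdot;0)\neq 0\}$, but this is immediate from the fact that $x^\h$ is a nontrivial trajectory of a Hamiltonian vector field. No substantial obstacle is expected.
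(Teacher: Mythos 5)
Your proof is correct and follows essentially the same route as the paper: differentiate $H$ along an orbit of \eqref{eqn:sys1}, note that the symplectic term drops out by antisymmetry of $J_n$ leaving $\nu_1|\D_x H|^2$, and contradict the homoclinic boundary condition $\lim_{t\to-\infty}H=\lim_{t\to+\infty}H$. The paper phrases the contradiction via equality of the two limits rather than integrating over $\Rset$, and it does not spell out the ``especially'' clause, but these are cosmetic differences only.
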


\begin{proof}
Assume that $\nu_1\neq 0$
 and let $\tilde{x}(t)$ denote a homoclinic orbit in \eqref{eqn:sys1}.
We easily compute
\[
\frac{\d}{\d t}H(\tilde{x}(t);\mu)
 =\nu_1\langle\D_x H(\tilde{x}(t);\mu),\D_x H(\tilde{x}(t);\mu)\rangle,
\]
which is negative or positive for $\nu_1<0$ or $>0$ if $\D_x H(x;\mu)\neq 0$.
This contradicts the fact that
\[
\lim_{t\rightarrow-\infty}H(\tilde{x}(t);\mu)
 =\lim_{t\rightarrow+\infty}H(\tilde{x}(t);\mu)
\]
since $\tilde{x}(t)$ is a homoclinic orbit.
Thus we obtain the result.
\end{proof}
By Lemma~\ref{lem:m6},
 no homoclinic orbit can be born from $x^\h(t)$ for $\nu_1\neq 0$.

Now we apply Theorems~\ref{thm:m2} and \ref{thm:m3} to \eqref{eqn:sys1}.
We use \eqref{eqn:psi1} to have
\[
a_1=\left(
\int_{-\infty}^\infty\langle\D_x H(x^\h(t);0),\D_x H(x^\h(t);0)\rangle\d t,0
\right)
\]
and
\begin{align*}
b_1=& \half\int_{-\infty}^\infty\left\langle\D_x H(x^\h(t);0),
 \D_x^2(J_n\D_x H(x^\h(t);0))(\varphi_2(t),\varphi_2(t))\right\rangle\d t\\
=& -\half\int_{-\infty}^\infty\left\langle J_n\D_x H(x^\h(t);0),
 \D_x^2 H(x^\h(t);0)(\varphi_2(t),\varphi_2(t))\right\rangle\d t\\
=& -\half\int_{-\infty}^\infty\frac{\d}{\d t}
 \left\langle\varphi_2(t),\D_x^2 H(x^\h(t);0)\varphi_2(t)\right\rangle\d t=0
\end{align*}
since
\begin{align*}
&
\D_x^2 H(x^\h(t);0)(\dot{\varphi}_2(t),\varphi_2(t))\\
&
=\left\langle\dot{\varphi}_2(t),
 \D_x^2 H(x^\h(t);0)\varphi_2(t)\right\rangle\\
&
=\left\langle J_n\D_x^2 H(x^\h(t);0)\varphi_2(t),
 \D_x^2 H(x^\h(t);0)\varphi_2(t)\right\rangle
=0.
\end{align*}
Using Lemma~\ref{lem:m6}
 and noting that the hypotheses of these theorems hold only if $\nu_1=0$,
 we obtain the following theorems for \eqref{eqn:Hsys} with $m=1$.
\begin{thm}
\label{thm:m4}
Under assumptions~(M1)-(M3) and (M6),
 let $n_0=2$ and suppose that $a_2,b_2\neq 0$.
Then a saddle-node bifurcation of homoclinic orbits occurs at $\mu=0$.
Moreover, it is supercritical or subcritical,
 depending on whether $a_2 b_2<0$ or $>0$.
\end{thm}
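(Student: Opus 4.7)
The plan is to apply the two-parameter saddle-node result Theorem~\ref{thm:m2} to the auxiliary system \eqref{eqn:sys1} and then invoke Lemma~\ref{lem:m6} to show that the bifurcation curve must lie in the slice $\nu_1=0$, thereby reducing to the original one-parameter Hamiltonian system \eqref{eqn:Hsys}.

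First I would verify that (M1)--(M3) still hold for \eqref{eqn:sys1} at $(\nu_1,\mu)=(0,0)$, which is immediate since at $\nu_1=0$ the system reduces to \eqref{eqn:Hsys}. For the Melnikov data of \eqref{eqn:sys1}, whose parameter is $\nu=(\nu_1,\mu)\in\Rset^2$, one checks directly using $J_n^*J_n=\id_n$ and $\psi_{\bar n+2}=-J_n\varphi_2$ that the $\mu$-component of the extended $a_2$ equals the scalar $a_2$ of the Hamiltonian system, and that $b_2$ is unchanged since at $\nu_1=0$ the extended vector field agrees with $J_n(\D_xH)^*$. The calculations just before the theorem then give
\[
a_1=\Bigl(\int_{-\infty}^{\infty}|\D_xH(x^\h(t);0)|^2\,\d t,\,0\Bigr),\qquad b_1=0,
\]
where the first component of $a_1$ is strictly positive because $\dot x^\h=J_n(\D_xH)^*\not\equiv 0$. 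Writing the $\nu_1$-component of the extended $a_2$ as $a_{21}$ and its $\mu$-component as $a_2$, the matrix
\[
\begin{pmatrix} a_1 \\ a_2\end{pmatrix}=\begin{pmatrix} a_{11} & 0 \\ a_{21} & a_2\end{pmatrix}
\]
is lower-triangular with nonzero diagonal, hence nonsingular, and $(b_1,b_2)=(0,b_2)\ne(0,0)$. Thus the hypotheses of Theorem~\ref{thm:m2} are satisfied.

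Next I would apply Theorem~\ref{thm:m2} to \eqref{eqn:sys1} to obtain a differentiable $\phi=(\phi_1,\phi_2):I\to\Rset^2$ with $\phi(0)=\phi'(0)=0$ along which homoclinic orbits exist. Lemma~\ref{lem:m6} forces $\phi_1\equiv 0$ on $I$, so the curve lies in the slice $\nu_1=0$ and produces homoclinic orbits of \eqref{eqn:Hsys} along $\mu=\phi_2(\alpha)$ with $\phi_2(0)=0$ and $\phi_2'(0)=0$. To extract the supercritical/subcritical distinction I would use the formula from the proof of Theorem~\ref{thm:m2},
\[
\phi''(0)=-\begin{pmatrix} a_{11} & 0 \\ a_{21} & a_2\end{pmatrix}^{-1}\!\!\begin{pmatrix} 0 \\ b_2\end{pmatrix}=\begin{pmatrix} 0 \\ -b_2/a_2\end{pmatrix},
\]
so $\phi_2''(0)=-b_2/a_2\ne 0$; the pair of homoclinic orbits therefore opens on the $\mu>0$ side when $a_2b_2<0$ (supercritical) and on the $\mu<0$ side when $a_2b_2>0$ (subcritical).

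The main obstacle is really bookkeeping rather than conceptual: one must carefully track which components of the extended $a_j$ correspond to $\nu_1$ versus $\mu$, and verify that the vanishings in the first row (namely $a_1^{(\mu)}=0$ and $b_1=0$) established for the Hamiltonian case indeed carry over to the extended system. Once the triangular structure of $(a_1^*,a_2^*)$ and the vanishing of $b_1$ are in hand, the saddle-node conclusion and the sign of $\phi_2''(0)$ follow directly from Theorem~\ref{thm:m2} combined with Lemma~\ref{lem:m6}.
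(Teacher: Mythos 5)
Your proposal is correct and follows exactly the paper's route: it introduces the auxiliary two-parameter system \eqref{eqn:sys1}, checks the nondegeneracy hypotheses of Theorem~\ref{thm:m2} via the triangular structure of $(a_1^*,a_2^*)$ together with the computed vanishings $a_{12}=0$, $b_1=0$, applies Theorem~\ref{thm:m2}, and uses Lemma~\ref{lem:m6} to pin the bifurcation curve to the slice $\nu_1=0$. The sign analysis via $\phi_2''(0)=-b_2/a_2$ also matches the intended argument.
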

\begin{thm}
\label{thm:m5}
Under assumptions~(M1)-(M6),
 let $n_0=2$ and suppose that $\bar{a}_2,\bar{b}_2\neq 0$.
Then a pitchfork bifurcation of homoclinic orbits occurs at $\mu=0$.
Moreover, it is supercritical or subcritical,
 depending on whether $\bar{a}_2\bar{b}_2<0$ or $>0$.
\end{thm}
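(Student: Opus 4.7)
The plan is to reduce Theorem~\ref{thm:m5} to Theorem~\ref{thm:m3} applied to the two-parameter extension \eqref{eqn:sys1} with $\nu=(\nu_1,\mu)\in\Rset^2$, and then collapse the resulting bifurcation diagram back to the Hamiltonian slice $\nu_1=0$ via Lemma~\ref{lem:m6}. The Hamiltonian computations performed just above Theorem~\ref{thm:m4} already give $a_1=(A,0)$ with $A=\int_{-\infty}^{\infty}\langle\D_x H(x^\h(t);0),\D_x H(x^\h(t);0)\rangle\,\d t>0$, and $b_1=0$. Combined with Lemma~\ref{lem:m4}, which forces $a_2=b_2=0$ in the $\Zset_2$-equivariant setting, this rules out a saddle-node bifurcation and directs us to the higher-order coefficients $\bar{a}_2,\bar{b}_2$ defined in \eqref{eqn:barab}.

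Writing the row vector $\bar{a}_2=(\bar{a}_{2,\nu_1},\bar{a}_{2,\mu})$ and identifying the scalar $\bar{a}_2$ of the theorem statement with the $\mu$-component $\bar{a}_{2,\mu}$, I would first verify the hypotheses of Theorem~\ref{thm:m3} for \eqref{eqn:sys1}. The determinant of $(a_1^*,\bar{a}_2^*)$ reduces to $A\,\bar{a}_{2,\mu}$ because the $\mu$-column of $a_1^*$ vanishes, so nonsingularity follows from $A>0$ and $\bar{a}_2\neq 0$; likewise $(b_1,\bar{b}_2)=(0,\bar{b}_2)\neq(0,0)$ under $\bar{b}_2\neq 0$. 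Theorem~\ref{thm:m3} then produces an $S$-conjugate pair of homoclinic orbits along $\nu=\phi(\alpha)$ with $\phi(-\alpha)=\phi(\alpha)$ and
\[
\phi''(0)=-\begin{pmatrix}A & 0\\ \bar{a}_{2,\nu_1} & \bar{a}_2\end{pmatrix}^{-1}\!\!\begin{pmatrix}0\\ \bar{b}_2\end{pmatrix}=\begin{pmatrix}0\\ -\bar{b}_2/\bar{a}_2\end{pmatrix}.
\]
The vanishing of $\phi_1''(0)$ is already suggestive, but the sharp statement that the whole branch lives in $\{\nu_1=0\}$ comes from Lemma~\ref{lem:m6}: no homoclinic orbit of \eqref{eqn:sys1} can persist off $\nu_1=0$, so the image of $\phi$ is confined to that slice, giving $\phi_1\equiv 0$ and reducing the pitchfork to the curve $\mu=\phi_2(\alpha)$ in the original Hamiltonian system \eqref{eqn:Hsys} with $\phi_2''(0)=-\bar{b}_2/\bar{a}_2\neq 0$.

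The criticality claim then follows from sign analysis of $\phi_2''(0)$: when $\bar{a}_2\bar{b}_2<0$ one has $\phi_2''(0)>0$, so the $S$-conjugate pair appears on the side $\mu>0$, which is the supercritical case; when $\bar{a}_2\bar{b}_2>0$ one has $\phi_2''(0)<0$, giving the subcritical case. The only nontrivial bookkeeping I anticipate is confirming that \eqref{eqn:sys1} genuinely inherits (M1)-(M5) from the underlying $\Zset_2$-equivariant Hamiltonian system -- but since the added term $\nu_1(\D_x H)^*$ is the gradient of the $S$-invariant function $H$ and the VE at $\nu=0$ is unchanged, the $S$-decomposition of the $\varphi_j,\psi_j$ is preserved and this verification is routine.
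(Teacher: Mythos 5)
Your proposal is correct and tracks the paper's own (very terse) argument exactly: apply Theorem~\ref{thm:m3} to the two-parameter extension \eqref{eqn:sys1}, use the Hamiltonian computations $a_1=(A,0)$ with $A>0$ and $b_1=0$, and invoke Lemma~\ref{lem:m6} to confine the bifurcating branch to $\nu_1=0$. The paper essentially compresses all of this into the single sentence preceding Theorems~\ref{thm:m4} and \ref{thm:m5}; you have reconstructed the same route with the determinant calculation, the formula for $\phi''(0)$, and the sign analysis made explicit, which is a faithful expansion rather than a different proof.

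One point you correctly flag as ``routine bookkeeping'' is worth being slightly more careful about than you let on: verifying that \eqref{eqn:sys1} inherits (M4)--(M5) from \eqref{eqn:Hsys} requires $S\nabla H(x;\mu)=\nabla H(Sx;\mu)$, which is \emph{not} an automatic consequence of (M4) for the Hamiltonian vector field alone -- it holds iff $S$ commutes with $J_n$ (equivalently, $S$ is a symmetric symplectic involution and $H$ is $S$-invariant). The paper is silent on this and it does hold in the worked example (where $S=\diag(1,-1,1,-1)$), so treating it as understood is consistent with the paper's own level of rigor, but a fully airtight write-up would state this compatibility between $S$ and $J_n$ as an explicit hypothesis.
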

In these statements
 the constants $a_2,b_2,\bar{a}_2,\bar{b}_2$ are obtained
 by \eqref{eqn:ab} and \eqref{eqn:barab1} for \eqref{eqn:Hsys} with $m=1$ as follows:
\begin{equation}
\begin{split}
a_2=&-\int_{-\infty}^{\infty}
 \langle J_n\varphi_2(t),\D_\mu f(x^\h(t);0)\rangle\,\d t,\\
b_2=&-\half\int_{-\infty}^{\infty}
 \langle J_n\varphi_2(t),\D_x^2 f(x^\h(t);0)(\varphi_2(t),\varphi_2(t))\rangle\,
 \d t,\\
\bar{a}_2=&-\int_{-\infty}^{\infty}
 \langle J_n\varphi_2(t),\D_\mu\D_x f(x^\h(t);0)\varphi_2(t)\\
&\qquad +\D_x^2 f(x^\h(t);0)(\xi^\mu(t),\varphi_2(t))\rangle\,\d t,\\
\bar{b}_2=&-\int_{-\infty}^{\infty}
 \langle J_n\varphi_2(t),
 \sixth\D_x^3 f(x^\h(t);0)(\varphi_2(t),\varphi_2(t),\varphi_2(t))\\
&\qquad +\D_x^2 f(x^\h(t);0)(\xi^\alpha(t),\varphi_2(t))\rangle\,\d t,
\end{split}
\label{eqn:Hab}
\end{equation}
where we used \eqref{eqn:psij} with $j=2$, and
\begin{equation}
\xi^{\mu,\alpha}(t)
 =\Phi(t)\left[\int_0^t\Psi^*(\tau)G_{\mu,\alpha}(\tau)\,\d\tau
 -\Xi^{\mu,\alpha}\right]
\label{eqn:Hxi}
\end{equation}
with $G_\mu(t)=\D_\mu f(x^\h(t);0)$,
 $G_\alpha(t)=\half\D_x^2 f(x^\h(t);0)(\varphi_2(t),\varphi_2(t))$ and
\[
\Xi_j^{\mu,\alpha}=
\begin{cases}
\displaystyle
\int_0^{-\infty}\langle\psi_j(t),G_{\mu,\alpha}(t)\rangle\,\d t
 & \mbox{for $j=3,\ldots,n_\s$;}\\[2ex]
\displaystyle
\int_0^{\infty}\langle\psi_j(t),G_{\mu,\alpha}(t)\rangle\,\d t
 & \mbox{for $j=n_\s+3,\ldots,n$;}\\[2ex]
0 & \mbox{otherwise.}
\end{cases}
\]
See Remark~\ref{rmk:m3}.
Note that in this case the condition $a_1\nu+b_1\alpha^2=0$
 is equivalent to $\nu_1=0$. 

\section{Differential Galois theory}

Differential Galois theory deals
 with the problem of integrability by quadratures for differential equations.
Here we briefly review a part of the differential Galois theory
 which is often referred to as the Picard-Vessiot theory
 and gives a complete framework about the integrability by quadratures
 of linear differential equations with variable coefficients. 

\subsection{Picard-Vessiot extensions}

Consider a system of abstract differential equations,
\begin{equation}\label{LinearSystem}
\dot y = Ay,\quad\quad A\in\mathrm{gl}(n,\Kset),
\end{equation}
where $\Kset$ is a differential field and
 $\mathrm{gl}(n,\Kset)$ denotes the ring of $n\times n$ matrices
 with entries in $\Kset$.
We recall that a \emph{differential field} is a field
 endowed with a derivation $\partial$,
 which is an additive endomorphism
 satisfying the Leibniz rule.
By abuse of notation we write $\dot y$ instead of $\partial y$.
The set $\mathrm{C}_{\Kset}$ of elements of $\Kset$ for which $\partial$ vanishes
 is a subfield of $\Kset$
and called the \emph{field of constants of $\Kset$}.
In our application of the theory in this paper,
 the differential field $\Kset$ is
 the field of meromorphic functions on a Riemann surface $\Gamma$
 endowed with a meromorphic vector field,
 so that the field of constants becomes
 the field of complex numbers $\mathbb C$.
A \emph{differential field extension} $\Lset\supset \Kset$
 is a field extension such that $\Lset$ is also a differential field
 and the derivations on $\Lset$ and $\Kset$ coincide on $\Kset$.

A differential field extension $\Lset\supset \Kset$
 satisfying the following conditions is called a \emph{Picard-Vessiot extension}
 for \eqref{LinearSystem}.
\begin{enumerate}
\item[\bf (PV1)]
There is a fundamental matrix $\Phi$
 of \eqref{LinearSystem} with coefficients in $\Lset$.
\item[\bf (PV2)]
The field $\Lset$ is spanned
 by $\Kset$ and entries of the fundamental matrix $\Phi$.
\item[\bf (PV3)]
The fields of constants for $\Lset$ and $\Kset$ coincide.
\end{enumerate}

The system \eqref{LinearSystem}
 admits a Picard-Vessiot extension which is unique up to isomorphism.
An algebraic construction of the Picard-Vessiot extension
was given in a general situation
 by Kolchin (see, \eg, \cite{K73}). However,
 when $\Kset$ is the field of meromorphic functions,
 such a construction is unnecessary.
Actually, we have a fundamental matrix
in some field of convergent Laurent series, 
and get the Picard-Vessiot extension just by adding convergent Laurent series
to the base field $\Kset$.
 This is the point of view originally
adopted by Picard and Vessiot (cf. \cite{K57}).

We now fix a Picard-Vessiot extension $\Lset\supset \Kset$
 and fundamental matrix $\Phi$ with coefficients in $\Lset$
 for \eqref{LinearSystem}.
Let $\sigma$ be a \emph{$\Kset$-automorphism} of $\Lset$,
 which is a field automorphism of $\Lset$
 that commutes with the derivation of $\Lset$
 and leaves $\Kset$ pointwise fixed.
Obviously, $\sigma(\Phi)$ is also a fundamental matrix of \eqref{LinearSystem}
 and consequently there is a matrix $M_\sigma$ with constant entries
 such that $\sigma(\Phi)=\Phi M_\sigma$.
This relation gives a faithful representation
 of the group of $\Kset$-automorphisms of $\Lset$
 on the general linear group as
\[
R\colon {\rm Aut}_{\Kset}(\Lset)\to {\rm GL}(n,{\rm C}_{\Lset}),
\quad \sigma\mapsto M_{\sigma},
\]
where ${\rm GL}(n,{\rm C}_{\Lset})$
is the group of $n\times n$ invertible matrices with entries in ${\rm C}_{\Lset}$.
The image of $R$
 is a linear algebraic subgroup of ${\rm GL}(n,{\rm C}_{\Lset})$,
 which is called the \emph{differential Galois group} of \eqref{LinearSystem}
and denoted by ${\rm Gal}(\Lset/\Kset)$.
This representation is not unique
 and depends on the choice of the fundamental matrix $\Phi$,
 but a different fundamental matrix only gives rise to a conjugated representation.
Thus, the differential Galois group is unique up to conjugation
 as an algebraic subgroup of the general linear group.

\begin{defn}
A differential field extension $\Lset\supset\Kset$ is called 
\begin{enumerate}
\item[(i)]
an \emph{integral extension} if there exists $a\in\Lset$ such
that $\dot a \in \Kset$ and $\Lset = \Kset(a)$,
 where $\Kset(a)$ is the smallest extension of $\Kset$ containing $a$;
\item[(ii)]
an \emph{exponential extension} if there exists $a\in\Lset$ such
that $\dot{a}/a \in \Kset$ and $\Lset = \Kset(a)$;
\item[(iii)]
an \emph{algebraic extension} if there exists $a\in\Lset$
 such that it is algebraic over $\Kset$ and $\Lset = \Kset(a)$.
\end{enumerate}
\end{defn}

\begin{defn}
A differential field extension $\Lset\supset\Kset$ is called a 
\emph{Liouvillian extension} if it can be decomposed as a tower of extensions,
$$
\Lset = \Kset_n \supset \ldots \supset \Kset_1\supset 
\Kset_0 = \Kset,
$$
such that each extension $\Kset_{i+1}\supset \Kset_i$ is
either integral, exponential or algebraic. It is called \emph{strictly Liouvillian}
if in the tower only integral and exponential extensions appear. 
\end{defn}

Let $G\subset {\rm GL}(n,{\rm C}_{\Lset})$ be an algebraic group.
Then it contains a unique maximal connected algebraic subgroup $G^0$,
 which is called the \emph{connected component of the identity}
 or \emph{connected identity component}.
The connected identity component $G^0\subset G$
 is a normal algebraic subgroup and the smallest subgroup of finite index,
 \ie, the quotient group $G/G^0$ is finite.
By the Lie-Kolchin Theorem \cite{K57,VS03},
a connected solvable linear algebraic group is triangularizable.
Here
 a subgroup of ${\rm GL}(n,{\rm C}_{\Lset})$ is
said to be \emph{triangularizable}
 if it is conjugated to a subgroup of the group of upper triangular matrices.
The following theorem relates
 the solvability and triangularizability of the differential Galois group
 with the (strictly) Liouvillian Picard-Vessiot extension
 (see \cite{K57,VS03} and \cite{BM09} for the proofs of the first and second parts,
 respectively).

\begin{thm}
\label{thm:dg}
Let $\Lset\supset\Kset$ be a Picard-Vessiot extension of \eqref{LinearSystem}.
\begin{enumerate}
\item[(i)]
The connected identity component
 of the differential Galois group ${\rm Gal}(\Kset/\Lset)$ is solvable
 if and only if $\Lset\supset\Kset$ is a Liouvillian extension.
\item[(ii)]
If the differential Galois group ${\rm Gal}(\Kset/\Lset)$ is triangularizable,
 then $\Lset\supset\Kset$ is a strictly Liouvillian extension. 
\end{enumerate}
\end{thm}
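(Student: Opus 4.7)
The plan is to combine the Galois correspondence with the Lie--Kolchin theorem and a row-by-row ``peeling'' of a fundamental matrix. Write $G={\rm Gal}(\Lset/\Kset)$ and let $G^0$ be its connected identity component. For the forward direction of~(i), I would first suppose $G^0$ is solvable and pass to the intermediate field $\Kset':=\Lset^{G^0}$; by the Galois correspondence this is an algebraic extension of $\Kset$ with finite Galois group $G/G^0$, and $\Lset\supset\Kset'$ is a Picard--Vessiot extension whose Galois group is the connected solvable group $G^0$. By Lie--Kolchin I may choose a fundamental matrix $\Phi=(\varphi_{ij})$ on which $G^0$ acts by upper-triangular matrices, $\sigma(\varphi_{ij})=\sum_{k\le j}\varphi_{ik}c_{kj}(\sigma)$. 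The diagonal entries satisfy $\sigma(\varphi_{ii})=c_{ii}(\sigma)\,\varphi_{ii}$, so $\dot{\varphi}_{ii}/\varphi_{ii}\in\Kset'$ and adjoining $\varphi_{ii}$ is an exponential extension; once $\varphi_{ii}$ is adjoined, the normalized off-diagonal entries in the next column are invariant under the remaining unipotent subquotient and their derivatives lie in the preceding field, producing an integral extension. Iterating column by column builds the required Liouvillian tower.

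For the reverse direction of~(i), I would suppose $\Lset\supset\Kset$ is Liouvillian with tower $\Kset=\Kset_0\subset\cdots\subset\Kset_n=\Lset$ and identify the Galois group of each individual step. Each $\Kset_{i+1}=\Kset_i(a)$ yields a Galois group contained in either a finite group (algebraic case), the additive group $(\Cset,+)$ (integral case, since $\dot a\in\Kset_i$ forces $\sigma(a)=a+c_\sigma$), or the multiplicative group $(\Cset^{\times},\cdot)$ (exponential case, since $\dot a/a\in\Kset_i$ forces $\sigma(a)=c_\sigma a$). All three classes are solvable, so $G$ inherits a filtration with solvable quotients, and hence $G^0$ is solvable. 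For part~(ii) I would then rerun the forward argument of~(i) while skipping the preliminary passage to $\Lset^{G^0}$: since the \emph{full} group $G$ is now triangularizable, the row-peeling can be performed directly over $\Kset$, yielding only integral and exponential extensions in the tower.

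The main obstacle lies in part~(ii). Strengthening ``Liouvillian'' to ``strictly Liouvillian'' forbids any algebraic step, and one must verify that at each intermediate stage the peeled-off element really generates an integral or an exponential extension over the immediately preceding stage, and that the tower stays inside $\Lset$ rather than escaping into an algebraic closure along the way. Controlling the successive $(\Cset,+)$-torsors associated to the unipotent subquotients of the triangular representation---the analysis carried out in~\cite{BM09}---is the delicate point that would have to be reproduced carefully, in contrast to the relatively routine classical proof of~(i) available in \cite{K57,VS03}.
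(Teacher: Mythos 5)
The paper does not prove this theorem itself: it cites Kaplansky and van~der~Put--Singer \cite{K57,VS03} for part~(i) and \cite{BM09} for part~(ii), so the comparison is really against the standard references rather than an argument in the text. Your overall route matches those references, but two steps as written do not hold up. In the forward direction of~(i) you assert that every diagonal entry satisfies $\sigma(\varphi_{ii})=c_{ii}(\sigma)\,\varphi_{ii}$; this is only true for $i=1$. With $\sigma(\Phi)=\Phi M_\sigma$ and $M_\sigma=(c_{kj})$ upper triangular one has $\sigma(\varphi_{jj})=\sum_{k\le j}\varphi_{jk}c_{kj}(\sigma)$, which mixes in the off-diagonal entries from earlier columns as soon as $j\ge 2$. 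The quantities that genuinely transform by a character of $G^0$ are the leading principal minors $W_j=\det(\varphi_{ik})_{1\le i,k\le j}$, and the peeling must be organized around those (or, equivalently, each column must first be reduced modulo the span of the preceding ones over the already-constructed field before its diagonal quotient has logarithmic derivative in that field).

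In the converse direction of~(i) you pass from the Liouvillian tower $\Kset=\Kset_0\subset\cdots\subset\Kset_n=\Lset$ directly to ``a filtration of $G$ with solvable quotients.'' But the intermediate fields $\Kset_i$ need not be Picard--Vessiot over $\Kset$, so the closed subgroups $H_i={\rm Gal}(\Lset/\Kset_i)$ form a descending chain of subgroups of $G$ that is not, in general, a chain of \emph{normal} subgroups, and there are no quotient groups to invoke. Identifying each single step $\Kset_{i+1}\supset\Kset_i$ as finite, additive, or multiplicative is correct, but turning that local information into solvability of $G^0$ is precisely the nontrivial content of the theorem; the proofs in \cite{K57,VS03} handle it by an induction on tower length (passing, for instance, to composita $\Lset\cdot\Kset_1$ viewed as a Picard--Vessiot extension of $\Kset_1$), not by reading a solvable series off the $H_i$. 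Your candid remark that part~(ii), following \cite{BM09}, is the delicate point is fair, but the converse of~(i) has a gap of exactly the same flavor that your sketch does not acknowledge.
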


\subsection{Monodromy group and Fuchsian equations}

Let $\Kset$ be the field of meromorphic functions in a Riemann surface $\Gamma$
and let $t_0\in\Gamma$
 be a nonsingular point in \eqref{LinearSystem}.
We prolong the fundamental matrix $\Phi(t)$ analytically
 along any loop $\gamma$ based at $t_0$ and containing no singular point,
 and obtain another fundamental matrix $\gamma*\Phi(t)$.
So there exists a constant nonsingular matrix $M_{[\gamma]}$ such that
$$
\gamma*\Phi(t) = \Phi(t)M_{[\gamma]}.
$$
The matrix $M_{[\gamma]}$ depends on the homotopy class $[\gamma]$
 of the loop $\gamma$
 and is called the \emph{monodromy matrix} of $[\gamma]$.

The set of singularities in \eqref{LinearSystem}
is a discrete subset of $\Gamma$, which is denoted by $\S$.
Let $\pi_1(\Gamma\setminus\S,t_0)$
 be the fundamental group of homotopy classes of loops based at $t_0$.
We have a representation
$$
\tilde{R}\colon \pi_1(\Gamma\setminus\S,t_0)\to {\rm GL}(n,\Cset), 
\quad [\gamma]\mapsto M_{[\gamma]}.
$$
The image of $\tilde{R}$ is called the \emph{monodromy group}
 of \eqref{LinearSystem}.
As in the differential Galois group,
the representation $\tilde{R}$ depends on the choice of the fundamental matrix,
but the monodromy group is defined as a group of matrices up to conjugation.
In general, monodromy transformations
 define automorphisms of the corresponding Picard-Vessiot extension.

A singular point of \eqref{LinearSystem} is called \emph{regular}
 if the growth of solutions along any ray approaching the singular point
 is bounded by a meromorphic function;
 otherwise it is called \emph{irregular}.
In particular, if $A=B(t)/t$ with $B(t)$ holomorphic at zero,
 then Eq.~\eqref{LinearSystem} has a regular singularity at $t=0$.
Eq.~\eqref{LinearSystem} is said to be \emph{Fuchsian}
 if all singularities are regular.
Any univalued solution of a Fuchsian equation is meromorphic.
This gives us the following result
 along with the normality of the Picard-Vessiot extensions
 (see, \eg, Theorem 5.8 in \cite{VS03} for the proof).

\begin{thm}[Schlessinger]\label{th:sl}
Assume that Eq.~\eqref{LinearSystem} is Fuchsian.
Then the differential Galois group of \eqref{LinearSystem}
 is the Zariski closure of the monodromy group. 
\end{thm}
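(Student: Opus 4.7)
The plan is to prove two inclusions by combining the fact that monodromy transformations are $\Kset$-automorphisms of the Picard-Vessiot extension with the Galois correspondence.

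First, I would establish that the monodromy image sits inside the differential Galois group. Analytic continuation along a loop $\gamma\in\pi_1(\Gamma\setminus\S,t_0)$ is a ring automorphism of germs at $t_0$ that commutes with the derivation and fixes every meromorphic function on $\Gamma$, hence it extends to a $\Kset$-automorphism of $\Lset$ whose matrix in the representation $R$ is precisely $M_{[\gamma]}$. Thus $\tilde R(\pi_1(\Gamma\setminus\S,t_0))\subset\mathrm{Gal}(\Lset/\Kset)$, and since the Galois group is Zariski closed in $\mathrm{GL}(n,\Cset)$ by construction, the Zariski closure $H:=\overline{\tilde R(\pi_1(\Gamma\setminus\S,t_0))}$ also lies in $\mathrm{Gal}(\Lset/\Kset)$.

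Second, for the reverse inclusion, I would identify the fixed field $\Lset^H$. Any $h\in\Lset$ that is invariant under every monodromy transformation can be written as a rational expression in entries of $\Phi$ with coefficients in $\Kset$, so it defines a locally analytic function on $\Gamma\setminus\S$; monodromy invariance promotes it to a single-valued meromorphic function on $\Gamma\setminus\S$. Near each point $p\in\S$ the Fuchsian hypothesis forces the entries of $\Phi$ to grow at most polynomially, so $h$ has polynomial growth at $p$, and by the standard meromorphic extension theorem (single-valued meromorphic functions on a punctured disk with polynomial growth at the puncture extend meromorphically) we conclude $h\in\Kset$. Because fixing a given element of $\Lset$ is a polynomial condition on the matrix acting on $\Lset$, passing from $\tilde R(\pi_1)$ to its Zariski closure $H$ does not enlarge the fixed field, so $\Lset^H=\Kset$.

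Finally, I would invoke the fundamental theorem of the Picard-Vessiot correspondence: an algebraic subgroup $H\subset\mathrm{Gal}(\Lset/\Kset)$ with $\Lset^H=\Kset$ must coincide with the full Galois group. This yields $\mathrm{Gal}(\Lset/\Kset)=H$, proving the theorem. The main delicate point is the meromorphic-extension step across $\S$: the Fuchsian assumption enters precisely here, because for irregular singularities one can produce single-valued functions (e.g.\ $\exp(1/t)$) that are not meromorphic, which would enlarge $\Lset^H$ beyond $\Kset$ and invalidate the argument.
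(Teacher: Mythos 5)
The paper does not prove Schlessinger's theorem itself but defers to Theorem~5.8 of \cite{VS03}, sketching exactly the strategy you use: univalued solutions of a Fuchsian equation are meromorphic, combined with the normality of the Picard--Vessiot extension (the Galois correspondence). Your argument reproduces that standard proof and is essentially correct; the one step that deserves a touch more care is the inference from polynomial growth of the entries of $\Phi$ to polynomial growth of an arbitrary monodromy-invariant $h=P(\Phi)/Q(\Phi)\in\Lset$, since moderate growth of $Q$ alone does not control $1/Q$---to conclude one should invoke the regular-singular normal form $\Phi(z)=Y(z)z^E$ at each point of $\S$, from which a nonzero $Q$ is bounded below by a power of $|z|$ on sectors, giving the required polynomial bound on $h$.
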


Since the group of triangular matrices is algebraic,
 the Zariski closure of a triangularizable group is triangularizable.
Noting this fact,
 we obtain the following result immediately from Theorem~\ref{th:sl}.

\begin{cor}\label{TriangularG}
Assume that Eq.~\eqref{LinearSystem} is Fuchsian.
Then the monodromy group is triangularizable
 if and only if the differential Galois group is triangularizable. 
\end{cor}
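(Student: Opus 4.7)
The corollary is essentially an immediate consequence of Theorem~\ref{th:sl} together with the fact that the group of upper triangular matrices is Zariski closed, so my plan is to spell out both implications compactly.

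For the ``only if'' direction, the plan is to recall that monodromy transformations extend to $\Kset$-automorphisms of the Picard-Vessiot extension, so the monodromy group is contained in the differential Galois group. If the latter is triangularizable, there is a single basis change conjugating it into the group of upper triangular matrices, and the same conjugation clearly triangularizes the monodromy group as a subgroup. This half requires no use of the Fuchsian hypothesis.

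For the ``if'' direction, I would argue as follows. Suppose the monodromy group $\tilde{R}(\pi_1(\Gamma\setminus\S,t_0))$ is triangularizable. After conjugating the fundamental matrix $\Phi$ by an appropriate constant matrix, we may assume the monodromy group is literally contained in the subgroup $T\subset{\rm GL}(n,\Cset)$ of upper triangular matrices. Now $T$ is defined by the polynomial equations setting the strictly lower triangular entries to zero, hence is Zariski closed (and algebraic). Therefore the Zariski closure of the monodromy group is still contained in $T$. By Theorem~\ref{th:sl}, which applies because the equation is Fuchsian, this Zariski closure equals the differential Galois group, which is therefore triangularizable.

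There is no real obstacle here; the proof is a one-line application of Theorem~\ref{th:sl} once one observes that the upper triangular subgroup is Zariski closed, a point already flagged in the paragraph preceding the corollary. The only subtlety worth mentioning explicitly in the write-up is that ``triangularizable'' is a conjugacy-invariant property, so one is free to choose the fundamental matrix that simultaneously triangularizes the monodromy representation and then appeal to Schlessinger's theorem in that chosen basis.
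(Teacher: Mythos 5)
Your argument is correct and follows exactly the route the paper takes: the paper's one-sentence proof is that the Zariski closure of a triangularizable group is triangularizable (because the upper-triangular group is algebraic), combined with Schlessinger's theorem identifying the differential Galois group with the Zariski closure of the monodromy group. You merely spell out both implications a bit more explicitly, but the key observations are identical.
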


\section{Proof of Theorem~\ref{thm:main}}

We now prove our main result.
Here we note that 
 $f$ and $\M$ can be extended to complex analytic ones
 in a neighborhood of $\Rset^4$ in $\Cset^4$
 since they are real analytic.
In this section
 we consider such complexifications including one of \eqref{eqn:ve}.

\subsection{Normal and tangential variational equations}

We first use assumption~(A2)
 to decompose the four-dimensional VE~\eqref{eqn:ve}
 into two-dimensional normal and tangent parts,
 so that we reduce our analysis
 to a two-dimensional system.

\begin{figure}[t]
\begin{center}
\includegraphics[scale=1]{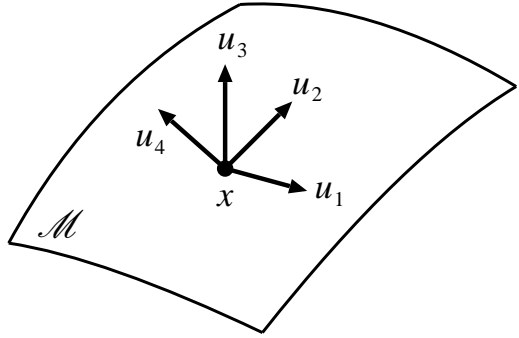}
\caption{Generators $u_j$, $j=1,2,3,4$.}
\label{fig:u}
\end{center}
\end{figure}

Consider the \emph{generic variational equation} at $\mu=0$,
\begin{equation}
\dot x=f(x;0),\quad
\dot{\xi}=\D_x f(x;0)\xi,
\label{eqn:gve}
\end{equation}
which defines a flow on the tangent bundle $T\Cset^4$
 and which is linear on its fibers.
Since $\M$ is invariant under the flow of \eqref{eqn:sys} by assumption~(A2),
 the tangent bundle $T\M$ and tangent bundle of $\Cset^4$ restricted to $\M$,
 $T\Cset^4|_\M$, are invariant under the flow of \eqref{eqn:gve}.
The normal bundle $N\M$ is identified with the quotient $T\Cset^4|_{\M}/T\M$
 by definition, and its fiber at $x\in\M$ is the linear space $T_x\Cset^4/T_x\M$. 
Let us take a \emph{moving frame} on $\M$,
 \ie, a system of generators $u_j\in\Cset^4$, $j=1,2,3,4$,
 for the tangent space $T_x\Cset^4$ with $x\in\M$,
 such that $T_x\M=\Span\{u_1,u_2\}$ and $N_x\M=\Span\{u_3,u_4\}$
 (see Fig.~\ref{fig:u}).
We introduce new coordinates $(\chi_1,\chi_2,\eta_1,\eta_2)\in\Cset^4$ by
\[
\xi=\chi_1 u_1+\chi_2 u_2+\eta_1 u_3+\eta_2 u_4
\]
on $T\Cset^4|_\M$.
The invariance of $T\M$ under the flow of \eqref{eqn:gve}
 ensures that the plane $\eta_1=\eta_2=0$ is invariant
 in the restriction of \eqref{eqn:gve} to $\M$.
So the second equation of \eqref{eqn:gve} is rewritten 
 by a block form in the new coordinates as
\begin{equation}
\begin{pmatrix} 
\dot\chi_1\\
\dot\chi_2\\
\dot\eta_1\\
\dot\eta_2
\end{pmatrix}
=\begin{pmatrix}
A_\chi(x) & A_\c(x) \\[1ex]
0 & A_\eta(x)
\end{pmatrix}
\begin{pmatrix}
\chi_1 \\ \chi_2\\ \eta_1 \\ \eta_2
\end{pmatrix}
\label{eqn:newve}
\end{equation}
if restricted to $\M$.
Here the $2\times 2$ matrix functions $A_\chi(x)$, $A_\eta(x)$ and $A_\c(x)$
 are analytic and obtained via algebraic and differential manipulation
 from $\D_x f(x;0)$ and $u_j$, $j=1,2,3,4$.

Let $e_j$ be a unit eigenvector of $\D_x f(0;0)$
 corresponding to the eigenvalue $\lambda_j$ for $j=1,2,3,4$.
From \eqref{eqn:newve} we obtain the following result.

\begin{lem}
\label{lem:p0}
The tangent space of $\M$ at the origin $x=0$ is spanned by $e_{j_-}$ and $e_{j_+}$,
 \ie, $T_0\M=\Span\{e_{j_-},e_{j_+}\}$, where $j_-=1$ or $2$ and $j_+=3$ or $4$.
\end{lem}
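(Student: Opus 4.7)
The plan is to exploit the fact that $T_0\M$ is a two-dimensional subspace of $\Cset^4$ that is invariant under the linearization $\D_x f(0;0)$. First I would establish this invariance from the invariance of $\M$ under the flow of $\dot x=f(x;0)$: since the time-$t$ map sends $\M$ into itself and fixes $0$, its derivative at $0$, namely $\exp(t\,\D_x f(0;0))$, preserves $T_0\M$; differentiating at $t=0$ gives $\D_x f(0;0)(T_0\M)\subset T_0\M$. This is also visible directly from the block-triangular form \eqref{eqn:newve} evaluated at $x=0$: the restricted system on $T_0\M$ is governed by $A_\chi(0)$, so the spectrum of $A_\chi(0)$ is a two-element subset (with multiplicity) of $\{\lambda_1,\lambda_2,\lambda_3,\lambda_4\}$.

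Next, I would rule out $T_0\M=E^\s:=\Span\{e_1,e_2\}$ and $T_0\M=E^\u:=\Span\{e_3,e_4\}$ using the homoclinic orbit. If $T_0\M=E^\s$, then $A_\chi(0)$ has only negative eigenvalues, so $0$ is asymptotically stable for the flow restricted to $\M$; but then no orbit in $\M$ other than the equilibrium itself can approach $0$ as $t\to-\infty$, contradicting $x^\h(t)\to 0$ as $t\to-\infty$. The symmetric argument (time reversal) rules out $T_0\M=E^\u$. Consequently $T_0\M$ meets both $E^\s$ and $E^\u$ nontrivially.

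Finally, since $E^\s\oplus E^\u=\Cset^4$ and $\dim T_0\M=2$, the non-triviality of both intersections forces $\dim(T_0\M\cap E^\s)=\dim(T_0\M\cap E^\u)=1$. Each such one-dimensional subspace is invariant under $\D_x f(0;0)$, hence is an eigenline; in $E^\s$ it is spanned by an eigenvector $e_{j_-}$ with $j_-\in\{1,2\}$, and in $E^\u$ by $e_{j_+}$ with $j_+\in\{3,4\}$. Therefore $T_0\M=\Span\{e_{j_-},e_{j_+}\}$, as required.

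The only delicate point, and the step I would treat most carefully, is the dynamical argument that excludes $T_0\M=E^\s$ or $E^\u$: it relies on the local stable/unstable manifold picture of the induced flow on the two-dimensional analytic invariant manifold $\M$, where the existence of a homoclinic connection forces the restricted linearization to be of saddle type (one eigenvalue of each sign). This argument is robust to possible coincidences $\lambda_1=\lambda_2$ or $\lambda_3=\lambda_4$, since one-dimensional invariant subspaces of a real linear map are always spanned by a (real) eigenvector, regardless of whether the corresponding eigenspace is one- or two-dimensional.
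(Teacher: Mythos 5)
Your proof is correct and takes essentially the same route as the paper's: both hinge on the observation that $T_0\M$ is a two-dimensional subspace invariant under $\D_x f(0;0)$ (equivalently, governed by the block $A_\chi(0)$ in \eqref{eqn:newve}), together with the dynamical fact that the homoclinic orbit lying on $\M$ forces that restriction to have one negative and one positive eigenvalue, whence $T_0\M$ is spanned by one stable and one unstable eigenvector. Your version is somewhat more explicit — it spells out the invariance via the linearized time-$t$ map, handles the primary decomposition $T_0\M=(T_0\M\cap E^\s)\oplus(T_0\M\cap E^\u)$, and notes robustness to $\lambda_1=\lambda_2$ or $\lambda_3=\lambda_4$ — but these are elaborations of, not departures from, the paper's terse two-line argument.
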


\begin{proof}
Let $x=0$ in \eqref{eqn:newve}.
We easily see that two eigenvalues of the Jacobian matrix
\[
\begin{pmatrix}
A_\chi(0) & A_\c(0) \\[1ex]
0 & A_\eta(0)
\end{pmatrix}
\]
are those of $A_\chi(0)$,
 and the associated eigenvectors have the form
 $\xi=(\chi_\pm,0)\in\Cset^2\times\Cset^2$,
 where $\chi_\pm$ are eigenvectors of $A_\chi(0)$.
Since $\M$ contains a homoclinic orbit,
 $A_\chi(0)$ must have a pair of positive and negative eigenvalues.
Thus we obtain the result.
\end{proof}
Henceforth we denote $\lambda_\pm=\lambda_{j_\pm}$
 and write the other eigenvalues as $\mu_\pm$, where $\mu_-<0<\mu_+$.

Let us consider \eqref{eqn:newve} for $x=x^\h(t)$.
Taking the normal components $\eta=(\eta_1,\eta_2)$,
 we have the \emph{normal variational equation (NVE)} around $x^\h(t)$,
\begin{equation}
\dot \eta = A_\eta(x^\h(t))\eta.
\label{eqn:nve}
\end{equation}
Moreover, we set $\chi=(\chi_1,\chi_2)$ and $\eta=0$ to obtain
 the \emph{tangent variational equation (TVE)} around $x^\h(x)$,
\begin{equation}\label{eqn:tve}
\dot\chi = A_\chi(x^\h(t))\chi,
\end{equation}
which governs the dynamics of \eqref{eqn:gve} on $T\M$ for $x=x^\h(t)$.
When interested in a necessary condition for condition~(C),
 we only have to deal with the two-dimensional NVE~\eqref{eqn:ve}
 instead of the four-dimensional VE~\eqref{eqn:ve} as follows.

\begin{lem}\label{lem:Cprime}
If condition~(C) holds, then
\begin{enumerate}
\item[\bf(C')]
the NVE~\eqref{eqn:nve} has a non-vanishing bounded solution. 
\end{enumerate}
\end{lem}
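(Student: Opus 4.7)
The plan is to exploit the block upper triangular structure of the variational equation in the moving frame, equation \eqref{eqn:newve}, and project bounded solutions of the VE onto their normal components, which automatically satisfy the NVE. The only thing to control is the kernel of this projection, which turns out to be the space of bounded solutions of the tangent variational equation (TVE); a hyperbolicity argument on the two-dimensional manifold $\M$ shows that this kernel is exactly one-dimensional.

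More concretely, I would fix the moving frame $u_1,\ldots,u_4$ from Section~4.1 along the homoclinic orbit $x^\h(t)$ and write any solution $\xi(t)$ of the complex VE \eqref{eqn:ve} as $\xi=\chi_1 u_1+\chi_2 u_2+\eta_1 u_3+\eta_2 u_4$. Because \eqref{eqn:newve} is block upper triangular, the projection $\xi\mapsto\eta=(\eta_1,\eta_2)$ sends solutions of the VE to solutions of the NVE \eqref{eqn:nve}; and since the frame is bounded along the bounded curve $x^\h(t)$, bounded $\xi$ yields bounded $\eta$. Let $W$ denote the space of bounded solutions of the VE, so $\dim W=n_0\ge 2$ by condition~(C), and let $P:W\to\{\text{bounded solutions of \eqref{eqn:nve}}\}$ be the restriction of this projection. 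If I can show that $\dim\ker P\le 1$, then the image of $P$ has dimension at least one, which produces a nontrivial bounded solution of the NVE; by uniqueness for the 2-dimensional linear system \eqref{eqn:nve}, such a solution is automatically non-vanishing.

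The heart of the proof is therefore the computation of $\ker P$. A solution $\xi$ lies in $\ker P$ if and only if $\eta(t)\equiv 0$, i.e.\ $\xi(t)\in T_{x^\h(t)}\M$ for all $t$, and these are precisely the bounded solutions of the TVE \eqref{eqn:tve}. The TVE is a two-dimensional linear system whose coefficient matrix tends to $A_\chi(0)$ as $t\to\pm\infty$. By Lemma~\ref{lem:p0} the space $T_0\M$ is spanned by $e_{j_-}$ and $e_{j_+}$, so $A_\chi(0)$ has eigenvalues $\lambda_-<0<\lambda_+$. Applying the asymptotic classification of Lemma~\ref{lem:m1} to the TVE, solutions bounded as $t\to+\infty$ form a one-dimensional subspace (those whose $e^{\lambda_+ t}$-asymptotic component vanishes) and similarly at $t\to-\infty$; the intersection is thus at most one-dimensional. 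Since $\dot{x}^\h(t)$ is a nontrivial element of that intersection, we conclude $\dim\ker P=1$, which finishes the proof.

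The main obstacle I anticipate is justifying that the TVE has only a one-dimensional space of bounded solutions, but this reduces to the hyperbolic structure at the saddle already recorded in Lemma~\ref{lem:p0} together with the Gruendler-type asymptotics of Lemma~\ref{lem:m1}, both of which are available. Everything else is linear-algebraic bookkeeping on the moving-frame decomposition \eqref{eqn:newve}.
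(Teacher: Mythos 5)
Your proposal is correct and follows essentially the same route as the paper: both arguments rest on showing that the TVE \eqref{eqn:tve} has exactly one independent bounded solution (spanned by $\dot{x}^\h(t)$, using the eigenvalues $\lambda_-<0<\lambda_+$ of $A_\chi(0)$ from Lemma~\ref{lem:p0}), so that condition~(C) forces a nontrivial bounded solution in the normal component. Your write-up merely makes the projection $P$ and its kernel explicit, which the paper leaves implicit, but the substance is identical.
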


\begin{proof}
Since $A_\chi(0)=\lim_{t\to\pm\infty}A_\chi(x^\h(t))$
 has two eigenvalue $\lambda_-<0<\lambda_+$ (cf. the proof of Lemma~\ref{lem:p0}),
 we see that the TVE~\eqref{eqn:tve} has one independent bounded solution at most,
 as in Lemma~\ref{lem:m1}.
In addition, the solution $\xi=\dot{x}^\h(t)$ of the VE~\eqref{eqn:ve}
 gives a bounded solution of the TVE~\eqref{eqn:tve}.
Hence, the TVE~\eqref{eqn:tve} has only one independent bounded solution.

Let us assume that condition~(C) holds,
 \ie, the VE~\eqref{eqn:ve} has two independent bounded solutions.
Since the TVE~\eqref{eqn:tve} has only one independent bounded solution,
 the NVE~\eqref{eqn:nve} must have a non-vanishing bounded solution.
\end{proof}

\subsection{Analyses of the VE, NVE and TVE}

We next analyze
 the VE~\eqref{eqn:ve}, NVE~\eqref{eqn:nve} and TVE~\eqref{eqn:tve}
 by using adequate systems of coordinates.
We begin with the following lemma.
\begin{lem}
\label{lem:p1}
There exist analytical functions $h_\pm: U\rightarrow\Cset^4$
 defined in a neighborhood $U$ of $x=0$ in $\Cset$ such that $h_\pm(0)=0$
 and the complexification of the homoclinic orbit $x^\h(t)$ is represented as
\[
x^\h(t)=
\begin{cases}
h_+\left(\e^{\lambda_+ t}\right)\quad\mbox{for $\Re(t)>0$};\\
h_-\left(\e^{\lambda_- t}\right)\quad\mbox{for $\Re(t)<0$}
\end{cases}
\]
in $U$.
\end{lem}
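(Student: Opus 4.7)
My plan is to reduce the parametrization to a pair of one-dimensional analytic flows, one on the local stable manifold of the saddle within $\M$ and one on the local unstable manifold. Restricting the ambient vector field to the analytic invariant manifold $\M$ supplied by (A2), I obtain a two-dimensional analytic vector field for which the origin is a hyperbolic saddle with eigenvalues $\lambda_-<0<\lambda_+$ by Lemma~\ref{lem:p0}. The classical analytic stable/unstable manifold theorem then yields one-dimensional analytic embeddings whose images are the local stable and unstable manifolds of the origin inside $\M$, tangent at the origin to $e_{j_-}$ and $e_{j_+}$ respectively. Since $x^\h(t)\in\M$ and $x^\h(t)\to 0$ as $\Re(t)\to+\infty$, the orbit lies on the local stable manifold for all sufficiently large positive $\Re(t)$, and symmetrically on the local unstable manifold for all sufficiently large negative $\Re(t)$.

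Next I straighten the flow along each of these one-dimensional invariant curves. Pulling back through its analytic parametrization, the flow becomes a scalar analytic ODE $\dot s=g(s)$ with $g(0)=0$ and $g'(0)=\lambda$, where $\lambda$ is the relevant stable or unstable eigenvalue. Since $g(s)/s$ is analytic and nonvanishing near $s=0$, one may write $\int\d s/g(s)=\lambda^{-1}\log s+\phi(s)$ for some $\phi$ analytic at $s=0$, so that $\sigma:=s\exp(\lambda\phi(s))$ is an analytic diffeomorphism near $0$ that linearizes the flow to $\dot\sigma=\lambda\sigma$. Its solutions are $\sigma(t)=c\,\e^{\lambda t}$, and the free constant $c$ can be absorbed into a shift of the time origin in the parametrization. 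Composing the inverse of this coordinate change with the analytic embedding of the invariant curve into $\Cset^4$ produces an analytic map from a disk $U\subset\Cset$ to $\Cset^4$ that vanishes at $0$ and expresses the corresponding end of $x^\h(t)$ in the required exponential form; performing the construction on each branch separately yields the two maps $h_\pm$.

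The only step that demands real care is the appeal to analyticity of the local stable and unstable manifolds of the saddle on $\M$, which follows from the analytic invariant manifold theorem applied to the complexified vector field on $\M$. No non-resonance condition intervenes because the straightening is carried out in one dimension on each branch, where a hyperbolic fixed point is always analytically linearizable by the explicit quadrature above. Compatibility of the two branches wherever both are defined is automatic from uniqueness of analytic continuation of $x^\h(t)$, and the $\pm$ indexing simply records which branch, and hence which of the two eigendirections, one is following in toward the origin.
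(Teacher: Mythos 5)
Your proof is correct and follows the same strategy as the paper's: restrict to the invariant manifold $\M$, reduce the parametrization to a one-dimensional analytic flow on each branch of the separatrix near the saddle (the paper phrases this as desingularizing the double point of the complex separatrix $\Gamma$ into two disks $\Sigma_\pm$), and then analytically linearize that scalar flow so that its time parametrization becomes $\e^{\lambda_\pm t}$. The only substantive difference is that you supply the linearization explicitly by the quadrature $\int \d s/g(s)=\lambda^{-1}\log s+\phi(s)$, $\sigma=s\e^{\lambda\phi(s)}$, whereas the paper simply invokes the analytic linearizability of $\dot z=\lambda z+g(z)z^2$ from \cite{AI88}; your elementary argument is a welcome self-contained replacement of that citation.
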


\begin{proof}
Let $\Gamma$ denote the complex separatrix of $x=0$
 on the invariant manifold $\M$ in \eqref{eqn:sys}.
It follows from assumption~(A2) that $x=0$ is a double point on $\Gamma$.
Hence, the intersection of $\Gamma\setminus\{0\}$
 with a small neighborhood of $x=0$ in $\mathbb C^4$
 consists of two pointed disks $\Sigma_{\pm}^*$.
We add two different copies of the origin to these pointed disks,
 and get two disks $\Sigma_{\pm}$.
Using Lemma~\ref{lem:p0} and holomorphic changes of coordinates,
 we represent the restriction of \eqref{eqn:sys} to $\Sigma_{\pm}$ as
\begin{equation}\label{eqn:disk}
\dot z_{\pm} = \lambda_{\pm}z_{\pm} + g_{\pm}(z_{\pm})z_{\pm}^2,
\end{equation}
where $z_{\pm}$ represent coordinates in the disks $\Sigma_{\pm}$
 and $g_{\pm}(z)$ are holomorphic functions.
Since $\Re(\lambda_{\pm})\neq 0$,
 Eq.~\eqref{eqn:disk} is analytically linearizable 
(see, \eg, Section~1.2 in Chapter~4 of \cite{AI88})
and hence rewritten as
\begin{equation}\label{eqn:disk1}
\dot \zeta_{\pm} = \lambda_{\pm}\zeta_{\pm}
\end{equation}
under holomorphic changes of coordinates,
 by reducing the sizes of $\Sigma_{\pm}$ if necessary.
The flow of \eqref{eqn:disk1} is given by $\zeta_{\pm}(t) = e^{\lambda_{\pm}t}$
 and the functions $h_\pm$ are the inverses of the transformation
 $x\mapsto\zeta_\pm$ in $U$.
\end{proof}

\begin{figure}
\begin{center}
\includegraphics[scale=0.9]{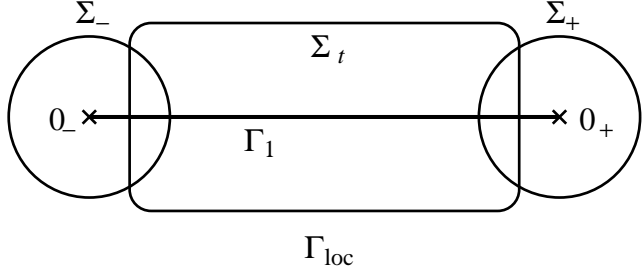}
\caption{Local Riemann surface $\Gamma_\loc$
 and its covering $\{\Sigma_\pm,\Sigma_t\}$.}
\label{fig:Gamma}
\end{center}
\end{figure}

Let $\Gamma_{0}=\{x=x^{\h}(t)\,|\,t\in\Rset\}\cup\{0\}$.
The curve $\Gamma_{0}$ in the complex space $\Cset^{4}$
consists of the homoclinic orbit $x^{\h}(t)$ and saddle $x=0$,
 which is a double point.
We introduce two points $0^{+}$ and $0^{-}$
corresponding to the origin
for desingularizing the curve $\Gamma_{0}$.
The points $0_{+}$ and $0_{-}$
are represented in the temporal parametrization
by $t=+\infty$ and $t=-\infty$, respectively.
Take a sufficiently narrow, simply connected neighborhood $\Sigma_t$
 of $\Gamma_0\setminus (\Sigma_+\cup\Sigma_-)$ in $\Gamma$,
such that it contains no singularity of \eqref{eqn:ve}
 and intersects $\Sigma_{\pm}$ in two simply connected domains.
We set $\Gamma_{\loc}=\Sigma_{-}\cup\Sigma_t\cup\Sigma_+$,
 so that $\Gamma_{\loc}$ is simply connected
 and contains only two singularities of the VE~\eqref{eqn:ve} at $0_\pm$.
See Fig.~\ref{fig:Gamma}.

Using Lemma~\ref{lem:p1}
 and the covering $\{\Sigma_\pm,\Sigma_t\}$ of $\Gamma_{\loc}$,
 we introduce three charts on the Riemann surface $\Gamma_{\loc}$ as follows:
The charts in $\Sigma_\pm$ are given by $z=\e^{\lambda_\pm t}$
 and the chart in $\Sigma_{t}$ is given by $t$.
Thus, we transform the VE~\eqref{eqn:ve} onto $\Gamma_\loc$,
 such that it is unchanged in $\Sigma_{t}$ and written as
\begin{equation}
\frac{\d\xi}{\d z_\pm}
 =\frac{1}{\lambda_\pm z_\pm}\D_x f(h_\pm(z_\pm);0)\,\xi
\label{eqn:ve0}
\end{equation}
in $\Sigma_{\pm}$.
We easily see see that $\D_x f(\chi_\pm(z);0)$ is analytic in $z$, and
 obtain the following lemma.

\begin{lem}
\label{lem:p3}
The singularities of the VE~\eqref{eqn:ve} at $0_\pm$ are regular.
\end{lem}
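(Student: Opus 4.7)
The plan is to read off regularity directly from the explicit form of the VE in the charts $\Sigma_\pm$ and apply the sufficient condition for regularity recalled in Section~3 (namely, that a coefficient matrix of the form $B(z)/z$ with $B$ holomorphic at $z=0$ yields a regular singularity). Since the statement is really a direct check, I do not anticipate a serious obstacle; most of the work has already been done in assembling the coordinates via Lemma~\ref{lem:p1}.

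First, I would recall that on the charts $\Sigma_\pm$ of the desingularized Riemann surface $\Gamma_\loc$, the VE~\eqref{eqn:ve} takes the form
\[
\frac{\d\xi}{\d z_\pm}=\frac{1}{\lambda_\pm z_\pm}\D_x f(h_\pm(z_\pm);0)\,\xi,
\]
where the point $0_\pm$ corresponds to $z_\pm=0$. Next, I would note that by Lemma~\ref{lem:p1} the map $h_\pm$ is analytic on a neighborhood of the origin with $h_\pm(0)=0$, and $f$ (hence $\D_x f(\,\cdot\,;0)$) is analytic in a neighborhood of the equilibrium $x=0$; consequently the matrix-valued function $z_\pm\mapsto\D_x f(h_\pm(z_\pm);0)$ is holomorphic at $z_\pm=0$.

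Setting $B_\pm(z_\pm)=\lambda_\pm^{-1}\,\D_x f(h_\pm(z_\pm);0)$, the coefficient matrix of the VE in these charts is $B_\pm(z_\pm)/z_\pm$ with $B_\pm$ holomorphic at $z_\pm=0$. Invoking the criterion stated in Section~3 (``if $A=B(t)/t$ with $B(t)$ holomorphic at zero, then the singularity at $t=0$ is regular''), we conclude that each of the two singularities of the transformed VE at $0_\pm$ is regular, which is the claim.

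The only delicate point I would mention explicitly is that regularity is a local property of the linear system in a given chart, so one must verify it in the coordinates in which the singular point is visible; the chart $z_\pm=\e^{\lambda_\pm t}$ from Lemma~\ref{lem:p1} is precisely what makes $0_\pm$ a finite point at which the coefficient matrix exhibits at worst a simple pole. Beyond this observation, no further computation is required.
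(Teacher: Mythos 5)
Your proof is correct and follows essentially the same reasoning the paper uses: in the charts $z_\pm$ the VE takes the form \eqref{eqn:ve0} with coefficient matrix $\lambda_\pm^{-1}\D_x f(h_\pm(z_\pm);0)/z_\pm$, and since $h_\pm$ and $f$ are analytic this is $B(z_\pm)/z_\pm$ with $B$ holomorphic at zero, so the sufficient condition from Section~3 gives regularity at $0_\pm$. The paper treats this as immediate from the transformed form and the analyticity observation, and your write-up supplies exactly those details.
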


Thus, 
the VE~\eqref{eqn:ve} is Fuchsian on $\Gamma_{\loc}$,
 and so are the NVE~\eqref{eqn:nve} and TVE~\eqref{eqn:tve}.
We also have the following result.

\begin{lem}
\label{lem:p4}
By the coordinates $z_\pm$,
 the NVE~\eqref{eqn:nve} and TVE~\eqref{eqn:tve} are,
 respectively, rewritten as
$$
\frac{\d\eta}{\d z_\pm}=\frac{1}{z_\pm}B_\eta^\pm(z_\pm)\eta
\quad\mbox{and}\quad
\frac{\d\chi}{\d z_\pm}=\frac{1}{z_\pm}B_\chi^\pm(z_\pm)\chi
$$
in $\Sigma_{\pm}$,
 where the $2\times 2$ matrix functions $B_\eta^\pm(z)$ and $B_\chi^\pm(z)$
 are holomorphic and
$$
B_\eta^\pm(0)=
\begin{pmatrix}
\mu_+/\lambda_\pm & 0\\
0 & \mu_-/\lambda_\pm 
\end{pmatrix},\quad
B_\chi^\pm(0)=
\begin{pmatrix}
\lambda_+/\lambda_\pm & 0\\
0 & \lambda_-/\lambda_\pm 
\end{pmatrix}.
$$
\end{lem}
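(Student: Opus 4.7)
The plan is to combine the block decomposition in \eqref{eqn:newve} with the local linearizing coordinates $z_\pm=\e^{\lambda_\pm t}$ constructed in Lemma~\ref{lem:p1}, and then to read off the leading term of each block from the spectrum of $\D_x f(0;0)$ using Lemma~\ref{lem:p0}.

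First I would restrict \eqref{eqn:newve} to the homoclinic orbit. By assumption~(A2) the submanifold $\M$ is invariant, so setting $x=x^\h(t)$ in \eqref{eqn:newve} preserves the block-triangular structure: the $\eta$-component is closed and yields the NVE~\eqref{eqn:nve}, while specializing further to $\eta=0$ gives the TVE~\eqref{eqn:tve}. Next I would change the independent variable from $t$ to $z_\pm$ via Lemma~\ref{lem:p1}. Since $z_\pm=\e^{\lambda_\pm t}$ satisfies $\dot{z}_\pm=\lambda_\pm z_\pm$, the chain rule gives $\d/\d t=\lambda_\pm z_\pm\,\d/\d z_\pm$, and substituting $x^\h(t)=h_\pm(z_\pm)$ yields
\[
\frac{\d\eta}{\d z_\pm}=\frac{1}{\lambda_\pm z_\pm}A_\eta(h_\pm(z_\pm))\,\eta,
\qquad
\frac{\d\chi}{\d z_\pm}=\frac{1}{\lambda_\pm z_\pm}A_\chi(h_\pm(z_\pm))\,\chi
\]
on $\Sigma_\pm$. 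Setting $B_\eta^\pm(z)=A_\eta(h_\pm(z))/\lambda_\pm$ and $B_\chi^\pm(z)=A_\chi(h_\pm(z))/\lambda_\pm$ puts the equations in the asserted form, and holomorphy of $B_\eta^\pm, B_\chi^\pm$ at $z=0$ follows at once from holomorphy of the moving-frame matrices $A_\eta, A_\chi$ on $\M$ together with holomorphy of $h_\pm$ at the origin.

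It remains to identify $B_\eta^\pm(0)$ and $B_\chi^\pm(0)$. Since $h_\pm(0)=0$, this reduces to computing $A_\eta(0)$ and $A_\chi(0)$. By construction, $A_\chi(0)$ is the matrix of $\D_x f(0;0)|_{T_0\M}$ in the basis $\{u_1,u_2\}$ of $T_0\M$, while $A_\eta(0)$ is the matrix of the induced endomorphism of $N_0\M=T_0\Cset^4/T_0\M$ in the basis $\{u_3,u_4\}\pmod{T_0\M}$. By Lemma~\ref{lem:p0}, $T_0\M$ is spanned by two eigenvectors of $\D_x f(0;0)$ corresponding to $\lambda_\pm$, whence $T_0\M$ is invariant and the two eigenvalues on the quotient $N_0\M$ are exactly the remaining ones $\mu_\pm$. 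Choosing the frame $\{u_1,u_2,u_3,u_4\}$ at $x=0$ so that $u_1,u_2$ are the tangential eigenvectors for $\lambda_+,\lambda_-$ and $u_3,u_4$ project in $N_0\M$ onto the quotient eigenvectors for $\mu_+,\mu_-$ diagonalizes both blocks simultaneously at $x=0$ and yields the stated expressions for $B_\eta^\pm(0)$ and $B_\chi^\pm(0)$.

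The only mildly subtle point is the identification of $A_\eta(0)$ with the quotient endomorphism on $N_0\M$ rather than with a genuine restriction to a complementary subspace: since the block $A_\c(0)$ is generally nonzero, $u_3,u_4$ need not be eigenvectors of the full Jacobian, but their classes modulo $T_0\M$ do diagonalize the induced action, which is all that is required. This is essentially a linear-algebra bookkeeping step, and I expect it to be the only part of the argument demanding any care; the rest is a direct application of Lemmas~\ref{lem:p0} and \ref{lem:p1} combined with the chain rule.
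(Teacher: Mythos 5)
Your proof is correct and follows essentially the same route as the paper: change the independent variable to $z_\pm=\e^{\lambda_\pm t}$ using Lemma~\ref{lem:p1}, absorb the factor $1/\lambda_\pm$ into $B^\pm$, and read off the eigenvalues at $x=0$ via Lemma~\ref{lem:p0}. The paper's own proof is terser (it merely notes the eigenvalues of $A_\chi(0)$ and $A_\eta(0)$ are $\lambda_\pm$ and $\mu_\pm$, implicitly assuming a diagonalizing frame); your explicit construction of the moving frame at $x=0$ and the careful remark that $A_\eta(0)$ is the \emph{quotient} endomorphism on $N_0\M$ rather than a restriction is a useful clarification, not a different argument.
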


\begin{proof}
As in \eqref{eqn:ve0},
 we can rewrite the NVE~\eqref{eqn:nve} and TVE~\eqref{eqn:tve} as
\[
\frac{\d\eta}{\d z_\pm}=\frac{1}{\lambda_\pm z_\pm}A_\eta(h_\pm(z_\pm))\eta
\quad\mbox{and}\quad
\frac{\d\chi}{\d z_\pm}=\frac{1}{\lambda_\pm z_\pm}A_\chi(h_\pm(z_\pm))\chi,
\]
respectively, in $\Sigma_\pm$.
Noting that $A_\eta(0)$ and $A_\chi(0)$
 have eigenvalues $\lambda_\pm$ and $\mu_\pm$,
 we prove the result.
\end{proof}

\subsection{Application of the differential Galois theory}

We apply the differential Galois theory of Section~3
 to the NVE~\eqref{eqn:nve} and VE~\eqref{eqn:ve}
 on the Riemann surface $\Gamma_\loc$.
To this end,
 we need an auxiliary result for two-dimensional systems of the form 
\begin{equation}\label{RegularSingular}
\frac{\d y}{\d z} =\frac{1}{z}B(z)y,\quad
y\in\Cset^2,
\end{equation}
where the $2\times 2$ matrix function $B(z)$ is holomorphic at $z=0$.
The system \eqref{RegularSingular}
 has a regular singular point at zero
 and its fundamental matrix near $z=0$ is expressed as
\begin{equation}\label{FMatrix}
\Phi(z) = Y(z)z^E,
\end{equation}
where the $2\times 2$ matrix function $Y(z)$ is meromorphic near $z=0$
 and $E$ is a constant matrix.
Using this expression,
 we can easily compute the monodromy matrix corresponding to an infinitesimal loop
 around $z=0$ as $M = \exp(2\pi i E)$
 (see \cite{B00}, page 8).

\begin{lem}\label{Fuchsian}
Assume that $B(0)$ has two real eigenvalues $\rho_\pm$
 such that $\rho_-<0<\rho_+$.
Then the following statements hold:

\begin{enumerate}
\item[(i)]
Eq.~\eqref{RegularSingular} has a solution $\bar{y}(z)$
 which is bounded along any ray approaching $z=0$
 and unique up to constant factors.
\item[(ii)]
Any other independent solutions of \eqref{RegularSingular}
 are unbounded along any ray approaching $z=0$.
\item[(iii)]
The monodromy matrix $M$ has an eigenvalue $\e^{2\pi i \rho_+}$,
 for which the associated eigenvector is given by  $\bar{y}(z)$,
$$
M\bar{y}(z) =\e^{2\pi i\rho_+}\bar{y}(z).
$$
\end{enumerate}

\end{lem}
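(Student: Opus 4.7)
The plan is to reduce the statement to the classical Frobenius theory for regular singular points of two-dimensional systems.

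First, since $B(0)$ has two distinct real eigenvalues $\rho_-<0<\rho_+$, I would apply a constant linear change of coordinates on $y$ to put $B(0)$ in diagonal form $\mathrm{diag}(\rho_+,\rho_-)$. Because the change of coordinates is constant, it does not affect the form $\frac{\d y}{\d z}=\frac{1}{z}B(z)y$ with $B$ holomorphic at $0$, nor the monodromy eigenvalues.

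Second, I would invoke the classical theory (see, e.g., the theory of regular singularities in \cite{B00} or the reference to \eqref{FMatrix}) to obtain a fundamental matrix of the form $\Phi(z)=Y(z)z^{E}$ near $z=0$, with $Y$ holomorphic and invertible. When $\rho_+-\rho_-\notin\Zset_{>0}$, the exponent matrix $E$ may be taken equal to $B(0)$ and we obtain two linearly independent solutions $y_\pm(z)=z^{\rho_\pm}\phi_\pm(z)$ with $\phi_\pm$ holomorphic at $z=0$ and $\phi_\pm(0)\neq 0$. In the resonant case $\rho_+-\rho_-\in\Zset_{>0}$, one replaces $y_-$ by $z^{\rho_-}\phi_-(z)+c(\log z)y_+(z)$ for some constant $c$. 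In either case there is a distinguished solution $y_+(z)=z^{\rho_+}\phi_+(z)$ holomorphic (up to the factor $z^{\rho_+}$) at $0$, and any other independent solution has a nonzero $y_-$-component.

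Third, I would verify the three claims in order. For (i), set $\bar y(z)=y_+(z)$. Along any ray $z=r\mathrm{e}^{\i\theta}$ one has $|\bar y(z)|\sim |z|^{\rho_+}\to 0$ as $r\to 0^+$, so $\bar y$ is bounded. For uniqueness, an arbitrary solution is $\alpha y_++\beta y_-$; if $\beta\neq 0$ then on any ray the $y_-$-contribution grows like $r^{\rho_-}$ (times at most $|\log r|$), which is unbounded since $\rho_-<0$ and dominates the vanishing $r^{\rho_+}|\log r|$ term. Hence $\beta=0$, proving (i). Claim (ii) is the same computation: an independent solution must have $\beta\neq 0$ and therefore blows up as $r\to 0^+$. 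For (iii), analytic continuation of $z^{\rho_+}$ around $z=0$ multiplies it by $\e^{2\pi\i\rho_+}$, while $\phi_+(z)$ is univalued; therefore $\bar y(z)\mapsto \e^{2\pi\i\rho_+}\bar y(z)$, which identifies $\e^{2\pi\i\rho_+}$ as an eigenvalue of $M$ with eigenvector $\bar y$.

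The only genuine subtlety is the resonant case $\rho_+-\rho_-\in\Zset_{>0}$, where the second solution may carry a logarithmic term. This is the main obstacle to a one-line argument, but it is harmless here since the logarithm is subdominant to $r^{\rho_-}$ as $r\to 0^+$ and does not affect the distinguished bounded solution $y_+=z^{\rho_+}\phi_+$, so the uniqueness and monodromy arguments in steps three carry over verbatim.
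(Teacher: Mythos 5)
Your proof is correct and follows essentially the same route as the paper's own proof: both diagonalize $B(0)$ by a constant change of coordinates, invoke the local fundamental-matrix form $\Phi(z)=Y(z)z^{E}$ for a regular singular point (citing Balser), split into nonresonant and resonant subcases, and read off boundedness and the monodromy eigenvalue from the $z^{\rho_\pm}$ factors, observing that any logarithmic term in the resonant case sits in the unbounded $z^{\rho_-}$ branch and leaves the $z^{\rho_+}$-solution univalued up to the scalar $\e^{2\pi i\rho_+}$.
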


\begin{proof}
We first consider the \emph{nonresonant} case
 of $\rho_+-\rho_-\not\in \mathbb Z$.
Using Theorem~5 in Chapter~2 of \cite{B00},
 we have $E = B(0)$ in \eqref{FMatrix}.
Under a constant linear transformation,
 we assume that $B(0)$ is diagonal and
\begin{equation}\label{NonLogarithmic}
\Phi(z) = Y(z)
\begin{pmatrix}
z^{\rho_-} & 0\\
0 & z^{\rho_+}
\end{pmatrix}.
\end{equation}
Letting $\Phi(z)=(y_1(z),y_2(z))$ and $Y(z)=(v_1(z),v_2(z))$ in \eqref{NonLogarithmic},
 we have $y_1(z) = z^{\rho_-}v_1(z)$ and $y_2(z) = z^{\rho_+} v_2(z)$.
Since $v_1(z),v_2(z)$ are holomorphic
 and $z^{\rho_+}$ (resp. $z^{\rho_-}$) is bounded (resp. unbounded)
 along any ray approaching $z=0$,
 so is the solution $y_2(z)$ (resp. $y_1(z)$).
Thus, we prove parts~(i) and (ii).
Moreover, part~(iii) immediately follows
 by noting that $M=\exp(2\pi i B(0))$ is also diagonal in the present coordinates
 and $\bar{y}(z)=y_2(z)$.

We next consider the \emph{resonant} case of $\rho_+-\rho_-\in \mathbb Z$. 
Let $\rho_0\in[0,1)$ be a real number
 such that $\rho_\pm=\rho_0\pm m_\pm$ for some positive integers $m_\pm$.
Using Theorem~6 in Chapter~2 of \cite{B00},
 we have a fundamental matrix of the form
\begin{equation}\label{Resonant}
\Phi(z) = Y(z)
\begin{pmatrix}
z^{-m_-} & 0\\
0 & z^{m_+}
\end{pmatrix}
z^{E'}
\end{equation}
under a constant linear transformation,
 where $E'$ is a $2\times 2$ matrix of the Jordan form
 having a double eigenvalue $\rho$.
If $E'$ is diagonal,
 then the expression \eqref{Resonant} coincides with \eqref{NonLogarithmic},
 so that we can prove parts~(i), (ii) and (iii) as above. 

Suppose that $E'$ is nondiagonal.
This situation corresponds to so-called \emph{logarithmic singularity}.
Eq.~\eqref{Resonant} becomes
\begin{equation}\label{Logarithmic}
\Phi(z) = Y(z)
\begin{pmatrix}
z^{\rho_-} & 0\\
z^{\rho_-}\log z & z^{\rho_+}
\end{pmatrix},
\end{equation}
and the same argument as in the nonresonant case yields parts~(i) and (ii).
Moreover, the monodromy matrix is computed as
$$
M =\exp
\begin{pmatrix}
2\pi i \rho_0 & 0\\
2\pi i & 2\pi i\rho_0
\end{pmatrix}
= 
\begin{pmatrix}
\e^{2\pi i \rho_0} & 0\\
2\pi i \e^{2\pi i \rho_0} & \e^{2\pi i\rho_0}
\end{pmatrix}.
$$
Letting $Y(z)=(v_1(z),v_2(z))$ as above,
 we see that $y_2(z)=\e^{2\pi i\rho_0}v_2(z)$ is an eigenvector of $M$
 for the eigenvalue $\e^{2\pi i\rho_0} = \e^{2\pi i\rho_+}$.
Thus we complete the proof.
\end{proof}

We turn to the NVE~\eqref{eqn:nve} and VE~\eqref{eqn:ve}
 on the Riemann surface $\Gamma_\loc$.

\begin{thm}
\label{thm:p}
Under condition~(C'), the monodromy group of the transformed NVE~\eqref{eqn:nve}
 on $\Gamma_\loc$ is triangularizable. 
\end{thm}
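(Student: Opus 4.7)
The plan is to exploit condition~(C') together with the local Frobenius analysis provided by Lemma~\ref{Fuchsian} at each of the two singularities $0_\pm$ to produce a common eigenvector for the generators of the monodromy group. Since the NVE is a two-dimensional system, the existence of such a common eigenvector is equivalent to the reducibility of the monodromy representation, which in dimension two is the same as triangularizability.

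First I would identify the generators of the monodromy group. By construction $\Gamma_\loc$ is simply connected, and by Lemma~\ref{lem:p3} the only singularities of the NVE on $\Gamma_\loc$ lie at $0_\pm$. Hence $\pi_1(\Gamma_\loc\setminus\{0_+,0_-\},t_0)$ is free on two generators, realized as small loops $\gamma_\pm$ around $0_\pm$, and the monodromy group is generated by the corresponding matrices $M_+$ and $M_-$. Next I would apply Lemma~\ref{Fuchsian} locally at each singular point: by Lemma~\ref{lem:p4}, near $0_\pm$ the transformed NVE reads $\d\eta/\d z_\pm=z_\pm^{-1}B_\eta^\pm(z_\pm)\eta$ with $B_\eta^\pm(0)=\diag(\mu_+/\lambda_\pm,\mu_-/\lambda_\pm)$, whose eigenvalues $\mu_\pm/\lambda_\pm$ are real and of opposite signs (recall $\mu_-<0<\mu_+$ and $\lambda_\pm$ are of fixed sign). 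Lemma~\ref{Fuchsian} then produces at each $0_\pm$ a solution $\bar y_\pm(z_\pm)$, unique up to scalars, which is bounded along any ray approaching $0_\pm$, while every independent solution is unbounded along any such ray; and $\bar y_\pm$ is an eigenvector of $M_\pm$ by part~(iii) of that lemma.

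Then I would use condition~(C') to link these two local data. The non-vanishing bounded solution $\bar\eta$ of the NVE, transported to the local coordinate $z_+=\e^{\lambda_+ t}$, is bounded as $z_+\to 0$ along the positive real axis. If $\bar\eta$ had any non-zero component on an independent solution, part~(ii) of Lemma~\ref{Fuchsian} would force it to be unbounded along that very ray; therefore $\bar\eta$ must be proportional to $\bar y_+$ in a neighborhood of $0_+$. The symmetric argument at $0_-$ shows $\bar\eta$ is also proportional to $\bar y_-$. Consequently, viewed as a line in the common two-dimensional solution space at $t_0$, $\Cset\bar\eta$ is simultaneously invariant under $M_+$ and $M_-$, hence under the whole monodromy group. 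Choosing any basis of solutions whose first element is $\bar\eta$ turns both generators, and therefore the whole monodromy group, into upper triangular matrices.

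The main obstacle is the fidelity of this translation step: one must make sure that ``bounded as $t\to+\infty$ along the real line'' in the temporal parametrization really means ``bounded along a ray approaching $z_+=0$'' in the local coordinate, and that the logarithmic (resonant) alternative in Lemma~\ref{Fuchsian} does not allow a more subtle bounded-but-not-eigen solution to sneak in. The lemma handles this precisely---it asserts uniqueness up to scalars of the ray-bounded solution in all cases---so once the identification of rays is granted, the rest of the argument is just the standard fact that a $2\times 2$ representation with a common invariant line is triangularizable.
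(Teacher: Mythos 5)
Your proof is correct and follows essentially the same route as the paper: apply Lemma~\ref{lem:p4} and Lemma~\ref{Fuchsian} to see that a bounded solution of the NVE is a common eigenvector of the two monodromy matrices around $0_\pm$, then invoke the fact that a $2\times 2$ matrix group with a common eigenvector is triangularizable. The paper's proof is a three-line sketch; you have supplied exactly the details it leaves implicit (identification of the generators $M_\pm$, matching the temporal boundedness with ray-boundedness at each $z_\pm=0$, and ruling out a logarithmic complication via the uniqueness clause in Lemma~\ref{Fuchsian}(i)), with no deviation in substance.
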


\begin{proof}
Suppose that condition~(C') holds, \ie,
 the NVE~\eqref{eqn:nve} has a non-vanishing bounded solution.
Using Lemmas~\ref{lem:p4} and \ref{Fuchsian}, we see that
 the corresponding bounded solution of the NVE~\eqref{eqn:nve}
 is a common eigenvector of the monodromy matrices around $0_\pm$.
Since a group of $2\times 2$ matrices with a common eigenvector
 is triangularizable, so is the monodromy group of the NVE~\eqref{eqn:nve} on $\Gamma_\loc$.
\end{proof}

\subsection*{Proof of Theorem~\ref{thm:main}}
Suppose that condition~(C) holds.
Then it follows from Lemma~\ref{lem:Cprime} that condition~(C') holds.
Using Corollary~\ref{TriangularG} and Theorem~\ref{thm:p}, we see that
 the differential Galois group of the NVE~\eqref{eqn:nve}
 on $\Gamma_\loc$ is triangularizable.
Recall that the TVE~\eqref{eqn:tve} is always integrable
 and solutions of the VE~\eqref{eqn:ve} are obtained
 by substituting solutions of the NVE~\eqref{eqn:nve}
 and by solving the resulting equation by variation of constants.
Hence, the differential Galois group of the VE~\eqref{eqn:ve} on $\Gamma_\loc$
 is triangularizable.
This completes the proof.
\qed

\begin{rmk}\label{rmk:p3}
Suppose that the full Riemann surface $\Gamma$ in $\Cset^4$
 has genus zero,
 and let $\bar\Gamma$ be its desingularization.
Then
 $\bar\Gamma$ is a Riemann sphere and hence
 the field of meromorphic functions on $\bar\Gamma$ is isomorphic
 to that of rational functions $\mathbb C(z)$.
Assume that the pullback of the VE~\eqref{eqn:ve} to $\bar\Gamma$
 has exactly three regular singularities $\{0_{\pm},p\}$.
Then $\Gamma_{\loc}\setminus\{0_{\pm}\}$ is homotopic to 
$\bar\Gamma\setminus \{0_{\pm},p\}$,
so that both Riemann surfaces give rise
 to equivalent monodromy representations.
Hence, we see via Theorems~\ref{thm:main} and \ref{th:sl} that
 when regarded as a differential equation with coefficients in $\Cset(z)$,
 the VE~\eqref{eqn:ve} is integrable
 by Liouvillian functions on $\Cset(z)$,
 which lie in a Liouvillian extension of $\Cset(z)$,
 if condition~(C) holds.
This situation happens in the example given in the next section.
\end{rmk}

\section{Example}

To illustrate our theory, we consider
\begin{equation}
\begin{split}
&
\dot{x}_1=x_3,\quad
\dot{x}_3=x_1-(x_1^2+\beta_1 x_2^2)x_1-\beta_3 x_2,\\
&
\dot{x}_2=x_4,\quad
\dot{x}_4=sx_2-(\beta_1 x_1^2+\beta_2 x_2^2)x_2-\beta_3 x_1-\beta_4 x_2^2,
\end{split}
\label{eqn:ex}
\end{equation}
which represents steady states
 in coupled real Ginzburg-Landau equations of the form
\begin{equation}
\begin{split}
&
\partial_t U_1=\partial_x^2 U_1-U_1+(U_1^2+\beta_1 U_2^2) U_1+\beta_3 U_2,\\
&
\tau\partial_t U_2=\partial_x^2 U_2 -sU_2+(\beta_1 U_1^2+\beta_2 U_2^2) U_2
 +\beta_3 U_1 +\beta_4 U_2^2,
\end{split}\quad
U_1,U_2\in\Rset,
\label{eqn:PDEs}
\end{equation}
with $x_1=U_1$, $x_2=U_2$, $x_3=\partial_x U_1$ and $x_4=\partial_x U_2$,
 where $s$, $\tau$ and $\beta_j$, $j=1,2,3,4$, are constants.
See \cite{M02} and references therein
 for general information on Ginzburg-Landau equations
 and \cite{DHV04,R99} for examples of PDEs of the type \eqref{eqn:PDEs}.
Eq.~\eqref{eqn:ex} also represents a two-mode truncation
 of non-planar vibrations of a buckled beam 
 when $\beta_3=\beta_4=0$ (see, \eg, \cite{Y99}).
Specifically, we assume that $s\ge 1$
 since the case of $s<1$ can be treated very similarly.

Eq.~\eqref{eqn:ex} is Hamiltonian with a Hamiltonian function
\[
H=\half(-x_1^2-s x_2^2+\beta_1 x_1^2 x_2^2+x_3^2+x_4^2)
 +\fourth(x_1^4+\beta_2 x_2^4)+\beta_3 x_1 x_2+\third\beta_4 x_2^3.
\]
The eigenvalues of the Jacobian matrix of the right-hand-side of \eqref{eqn:ex}
 at $x=0$ are given by
\[
\lambda_1=-\sqrt{s},\quad
\lambda_2=-1,\quad
\lambda_3=1,\quad
\lambda_4=\sqrt{s}.
\]
When $\beta_3=0$,
 the $(x_1,x_3)$- and the $(x_2,x_4)$-planes are invariant
 under the flow of \eqref{eqn:ex} and there exist a pair of homoclinic orbits
\begin{equation}
x_\pm^\h(t)
 =\left(\pm\sqrt{2}\,\sech t,0,\mp\sqrt{2}\,\sech\,t\tanh t,0\right)
\label{eqn:exhomo}
\end{equation}
on the $(x_1,x_3)$-plane.
Here we are only interested in the homoclinic orbits given by \eqref{eqn:exhomo}
 although another pair of homoclinic orbits exist on the $(x_2,x_4)$-plane.
Thus, assumptions~(A1) and (A2) (\ie, (M1) and (M2)) hold for $\beta_3=0$
 as well as assumption~(M6).

Let $\beta_3=\beta_4=0$.
Then Eq.~\eqref{eqn:ex} is $\Zset_2$-equivalent with
\[
S=
\begin{pmatrix}
1 & 0 & 0 & 0\\
0 & -1 & 0 & 0\\
0 & 0 & 1 & 0\\
0 & 0 & 0 & -1
\end{pmatrix}.
\]
Thus, assumption~(M3) holds.
We also have $X^+=\{(x_1,0,x_3,0)\in\Rset^4\,|\,x_1,x_3\in\Rset\}$,
 $X^-=\{(0,x_2,0,x_4)\in\Rset^4\,|\,x_2,x_4\in\Rset\}$,
 and $x_\pm^\h(t)\in X^+$.
Henceforth we take $x^\h(t)=x_+^\h(t)$ to avoid complication in notation.

For $\beta_3=\beta_4=0$,
 the VE around the homoclinic orbits $x^\h(t)$ is given by
{\setcounter{enumi}{\value{equation}}
\addtocounter{enumi}{1}
\setcounter{equation}{0}
\renewcommand{\theequation}{\theenumi\alph{equation}}
\begin{align}
&
\dot{\xi}_1=\xi_3,\quad
\dot{\xi}_3=(1-6\,\sech^2 t)\xi_1,\label{eqn:exve1}\\
&
\dot{\xi}_2=\xi_4,\quad
\dot{\xi}_4=(s-2\beta_1\,\sech^2 t)\xi_2,\label{eqn:exve2}
\end{align}
\setcounter{equation}{\value{enumi}}}
which is divided
 into uncoupled two-dimensional linear systems of the form
\begin{equation}
\dot{\eta}_1=\eta_2,\quad
\dot{\eta}_2=(\nu_1-\nu_2\,\sech^2 t)\eta_1.
\label{eqn:lsys}
\end{equation}

Eqs.~\eqref{eqn:exve1} and \eqref{eqn:exve2}, respectively,
 give the TVE and NVE for \eqref{eqn:ex}.
Letting $z=\sech^2 t$ in \eqref{eqn:lsys},
 we have a Fuchsian second-order differential equation
\begin{equation}
\frac{\d^2\eta}{\d z^2}+\frac{3z-2}{2z(z-1)}\frac{\d\eta}{\d z}
 +\frac{\nu_1-\nu_2 z}{4z^2(z-1)}\eta=0,
\label{eqn:2ode}
\end{equation}
which has regular singularities at $z=0,1,\infty$
 and whose solutions are expressed by a Riemann $P$ function \cite{WW27} as
\begin{equation}
P\left\{
\begin{matrix}
0 & 1 & \infty\\
s_0^- & 0 & s_\infty^- & z\\
s_0^+ & \half & s_\infty^+
\end{matrix}
\right\},
\label{eqn:P}
\end{equation}
where $s_0^\pm$ and $s_\infty^\pm$ represent the local exponents of \eqref{eqn:2ode}
 at $z=0$ and $\infty$, respectively, and are given by
\[
s_0^\pm=\pm\half\sqrt{\nu_1},\quad
s_\infty^\pm=\frac{1\pm\sqrt{4\nu_2+1}}{4}.
\]
Note that the local exponents of \eqref{eqn:2ode} at $z=1$ are 0 and $\half$.
Thus, the VE (\ref{eqn:exve1},b) is transformed
 into the Fuchsian system of the type \eqref{eqn:2ode}
 with three regular singularities,
 so that we only have to discuss the monodromy and differential Galois groups
 of \eqref{eqn:2ode} on the Riemann sphere $\Cset\cup\{\infty\}$
 (see Remark~\ref{rmk:p3}).
The following result was essentially proven in \cite{K69}.

\begin{lem}
\label{lem:ex}
Consider a Fuchsian second-order differential equation
 which has three singularities $z=z_j$, $j=1,2,3$,
 and a Riemann $P$ function
\[
P\left\{
\begin{matrix}
z_1 & z_2 & z_3\\
\rho_1^+ & \rho_2^+ & \rho_3^+ & z\\
\rho_1^- & \rho_2^- & \rho_3^-
\end{matrix}
\right\}.
\]
Then its monodromy and differential Galois groups are triangularizable
 if and only if at least one of $\rho_1+\rho_2+\rho_3$, $-\rho_1+\rho_2+\rho_3$,
 $\rho_1-\rho_2+\rho_3$ and $\rho_1+\rho_2-\rho_3$ is an odd integer,
 where $\rho_j=\rho_j^+ -\rho_j^-$, $j=1,2,3$, denote the exponent differences.
\end{lem}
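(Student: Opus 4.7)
The plan is to reduce the general Riemann $P$ situation to the standard Gauss hypergeometric equation, apply a classical reducibility criterion for the latter, and then lift the conclusion from the monodromy group to the differential Galois group via Corollary~\ref{TriangularG}. First, I apply a Möbius transformation of $\Cset\cup\{\infty\}$ sending $(z_1,z_2,z_3)$ to $(0,1,\infty)$ and then multiply solutions by a gauge factor $z^{-\rho_1^-}(z-1)^{-\rho_2^-}$. The first step is a biholomorphism of the Riemann sphere and so leaves the monodromy and differential Galois groups intact; the second multiplies each monodromy generator by a nonzero scalar (the phase picked up by the gauge factor around each singularity) and therefore preserves triangularizability. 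The resulting equation is the standard hypergeometric equation with parameters $(a,b,c)$ determined by
\[
1-c=\rho_1,\qquad c-a-b=\rho_2,\qquad a-b=\rho_3.
\]

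Second, I invoke the classical criterion (essentially due to Kimura \cite{K69}) that the monodromy representation of the Gauss hypergeometric equation is reducible---equivalently, admits a common one-dimensional invariant subspace, and hence is triangularizable in $\mathrm{GL}(2,\Cset)$---if and only if at least one of $a$, $b$, $c-a$, $c-b$ is an integer. Solving the linear system above gives
\begin{align*}
2a&=1-\rho_1-\rho_2+\rho_3, & 2b&=1-\rho_1-\rho_2-\rho_3,\\
2(c-a)&=1-\rho_1+\rho_2-\rho_3, & 2(c-b)&=1-\rho_1+\rho_2+\rho_3,
\end{align*}
so each of the four integrality conditions corresponds bijectively to one of the quantities $\rho_1+\rho_2-\rho_3$, $\rho_1+\rho_2+\rho_3$, $\rho_1-\rho_2+\rho_3$, $-\rho_1+\rho_2+\rho_3$ being an odd integer, which is precisely the condition stated in the lemma.

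Finally, since the transformed equation is Fuchsian on the Riemann sphere, Corollary~\ref{TriangularG} translates the monodromy criterion into one for the differential Galois group: the Zariski closure of a triangularizable subgroup of $\mathrm{GL}(2,\Cset)$ is again triangularizable, so the two properties coincide. The step I expect to be the main obstacle is the Kimura reducibility criterion itself, whose proof consists in writing the three generators $M_0,M_1,M_\infty$ (subject to $M_0M_1M_\infty=I$) in terms of their eigenvalue pairs $\{1,\e^{-2\pi ic}\}$, $\{1,\e^{2\pi i(c-a-b)}\}$, $\{\e^{-2\pi ia},\e^{-2\pi ib}\}$ and determining when they admit a common eigenvector; this is a classical but slightly delicate case analysis, which one can carry out via the Pochhammer integral representation or, equivalently, through the explicit connection formulas between the local Frobenius bases at $0$, $1$, and $\infty$.
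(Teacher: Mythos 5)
Your plan is correct and, at its core, takes the same route as the paper: the paper itself gives no proof but simply asserts that the result ``was essentially proven in \cite{K69}.'' What you add is the explicit reduction scaffolding: the M\"obius normalization of the singularities to $0,1,\infty$ and the gauge factor $z^{-\rho_1^-}(z-1)^{-\rho_2^-}$ (which, as you correctly observe, only rescales each monodromy generator by a scalar and so cannot affect triangularizability), the parameter identification $2a=1-\rho_1-\rho_2+\rho_3$, etc.\ via the Fuchs relation, the equivalence for $2\times 2$ groups between reducibility (a common eigenvector) and triangularizability, and the final transfer from monodromy to differential Galois group via Corollary~\ref{TriangularG}. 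Your bookkeeping checks out: each of $2a,2b,2(c-a),2(c-b)$ being an even integer is equivalent, up to an overall sign, to one of the four combinations $\pm\rho_1\pm\rho_2\pm\rho_3$ with an odd number of plus signs being an odd integer, which matches the lemma (whose condition is invariant under $\rho_j\mapsto-\rho_j$). The only thing you defer, as the paper also does, is the hypergeometric reducibility criterion itself; that criterion is in fact classical (pre-Kimura), with Kimura's contribution being the complete Liouvillian classification of which the reducible case is Case~I, so your citation is slightly coarse but harmless.
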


\begin{figure}
\begin{center}
\includegraphics[scale=0.7]{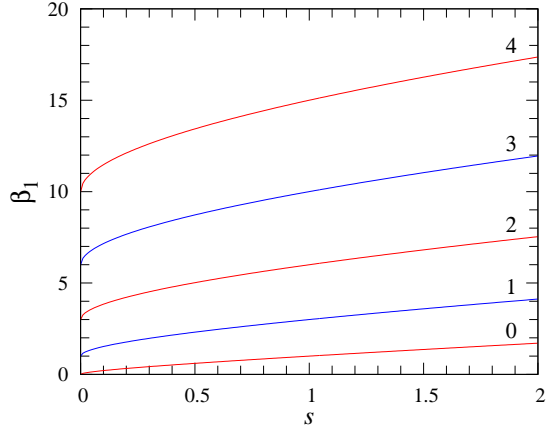}
\caption{Condition~\eqref{eqn:beta} for $\ell=$0-4.
The value of $\ell$ is labeled.
Saddle-node bifurcations can occur on the red curves
 but not on the blue curves
 while pitchfork bifurcations can occur on all curves.}
\label{fig:beta}
\end{center}
\end{figure}

Using Lemma~\ref{lem:ex}, we see that
 the monodromy and differential Galois groups for \eqref{eqn:2ode} are triangularizable
 if and only if at least one of
\begin{equation}
(s_0^+ -s_0^-)\pm(s_\infty^+ -s_\infty^-)\pm\half
 =\frac{2\sqrt{\nu_1}\pm \sqrt{4\nu_2+1}\pm 1}{2}
\label{eqn:le}
\end{equation}
is an odd integer.
Obviously, this condition is satisfied by \eqref{eqn:exve1},
 and also by \eqref{eqn:exve2} if and only if for some $\ell\in\Zset$
\[
\sqrt{8\beta_1+1}\pm 2\sqrt{s}\pm 1=2(2\ell+1),
\]
\ie,
\begin{equation}
\beta_1=\frac{(2\sqrt{s}+2\ell+1)^2-1}{8},\quad
\ell\in\Zset.
\label{eqn:beta}
\end{equation}
See Fig.~\ref{fig:beta} for how the values of $\beta_1$
 satisfying condition~\eqref{eqn:beta} with $\ell=$0-4 change when $s$ is varied.
Noting that Eq.~\eqref{eqn:sys} is $\Zset_2$-equivalent for $\beta_3=0$
 and applying Theorem~\ref{thm:main}, we prove the following theorem.

\begin{thm}
\label{thm:ex1}
\begin{enumerate}[(i)]
\item
Choose $\beta_3$ as a control parameter and fix the other parameters.
Then a saddle-node bifurcation occurs at $\beta_3=0$ in \eqref{eqn:ex}
 only if condition~\eqref{eqn:beta} holds.
\item
Choose $\beta_1$ as a control parameter and fix the other parameters,
 especially $\beta_3=\beta_4=0$.
Then a pitchfork bifurcation occurs in \eqref{eqn:ex}
 only if condition~\eqref{eqn:beta} holds.
\end{enumerate}
\end{thm}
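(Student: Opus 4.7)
The plan is to argue by contraposition, invoking Theorem~\ref{thm:main} and then applying the Fuchsian analysis of~\eqref{eqn:2ode}. Suppose the claimed bifurcation (saddle-node for~(i) at $\beta_3=0$, pitchfork for~(ii) at some $\beta_1=\beta_1^\ast$) takes place. Since the ambient system is four-dimensional, by Remark~\ref{rmk:m1} (equivalently, by the contrapositives of Theorems~\ref{thm:m2}--\ref{thm:m5}) such a bifurcation forces $n_0\ge 2$, and so condition~(C) must hold at the relevant parameter value. Assumptions~(A1) and~(A2) are in force at $\beta_3=0$ with the invariant plane $\M=\{x_2=x_4=0\}$ carrying $x^\h(t)$ from~\eqref{eqn:exhomo}, so Theorem~\ref{thm:main} yields that the differential Galois group of the VE given by~\eqref{eqn:exve1} and~\eqref{eqn:exve2} on $\Gamma_\loc$ is triangularizable.

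Next, I would exploit the block-diagonal structure of the Jacobian along $x^\h(t)$ at $\beta_3=0$: the VE decouples into the TVE~\eqref{eqn:exve1} and the NVE~\eqref{eqn:exve2}, so the Picard--Vessiot extension of the VE is the compositum of those of the two blocks, and triangularizability of the full Galois group descends to triangularizability of the Galois group of~\eqref{eqn:exve2}. The substitution $z=\sech^2 t$ then recasts~\eqref{eqn:exve2} as the Fuchsian equation~\eqref{eqn:2ode} on the Riemann sphere $\Cset\cup\{\infty\}$ with $\nu_1=s$, $\nu_2=2\beta_1$, and regular singularities at $z=0,1,\infty$. Since the complexified homoclinic orbit is rationally parametrized via $(\sech t,\tanh t)$, the separatrix $\Gamma$ has genus zero, so Remark~\ref{rmk:p3} identifies the Galois group on $\Gamma_\loc$ with that of~\eqref{eqn:2ode} on $\Cset\cup\{\infty\}$.

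Applying Lemma~\ref{lem:ex} to the Riemann $P$ function~\eqref{eqn:P} then requires at least one of the four combinations in~\eqref{eqn:le} to be an odd integer. With exponent differences $\sqrt{s}$, $\tfrac12$ and $\tfrac12\sqrt{8\beta_1+1}$ at $z=0,1,\infty$, the condition unfolds to $\sqrt{8\beta_1+1}=2\sqrt{s}+2\ell+1$ for some $\ell\in\Zset$ (absorbing the various sign and parity choices), which upon squaring is exactly~\eqref{eqn:beta}. Parts~(i) and~(ii) then follow simultaneously: in~(i), the Hamiltonian single-parameter setting at $\beta_3=0$ invokes Theorem~\ref{thm:m4}; in~(ii), the $\Zset_2$-equivariance restored by $\beta_3=\beta_4=0$ invokes Theorem~\ref{thm:m5}. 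Both require condition~(C) at the bifurcation value, and both therefore reduce via the chain above to~\eqref{eqn:beta}.

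The principal obstacle lies in the application of Remark~\ref{rmk:p3}: one must verify from the explicit rational parametrization of the separatrix that $\Gamma$ has genus zero and that the three regular singularities of~\eqref{eqn:2ode} at $z=0,1,\infty$ correctly account for the monodromy captured by the two singularities $0_\pm$ on $\Gamma_\loc$, thereby justifying the identification of the two Galois groups. Once this identification is secured, the remaining algebra---enumerating the four sign combinations in~\eqref{eqn:le} and solving for $\beta_1$---is entirely routine.
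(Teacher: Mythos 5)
Your proposal is correct and follows essentially the same route as the paper's (largely implicit) argument: a bifurcation forces $n_0\ge 2$ (Remark~\ref{rmk:m1}), hence condition~(C), hence by Theorem~\ref{thm:main} the Galois group of the VE on $\Gamma_\loc$ is triangularizable; since the VE decouples into the TVE~\eqref{eqn:exve1} (always triangularizable) and the NVE~\eqref{eqn:exve2}, this forces triangularizability of the NVE's Galois group, and Lemma~\ref{lem:ex} applied to the hypergeometric normal form~\eqref{eqn:2ode} (via Remark~\ref{rmk:p3}) then yields condition~\eqref{eqn:beta}. Two small inaccuracies worth noting: the parenthetical ``contrapositives of Theorems~\ref{thm:m2}--\ref{thm:m5}'' is not what gives $n_0\ge 2$---the needed implication comes from the persistence statement of Theorem~\ref{thm:m1} (as in Remark~\ref{rmk:m1})---and the closing mention of Theorems~\ref{thm:m4}--\ref{thm:m5} is superfluous here, since those supply \emph{sufficient} conditions (relevant to Theorem~\ref{thm:ex2}) rather than the \emph{necessary} direction being proved.
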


Now we compute the second independent bounded solution of the VE (\ref{eqn:exve1},b)
 for $\beta_3=\beta_4=0$,
 based on the above result.
Consider \eqref{eqn:exve2}
 and suppose that condition~\eqref{eqn:beta} holds.
Then we have
\[
s_0^\pm=\pm\half\sqrt{s},\quad
s_\infty^\pm=\fourth\left(1\pm\left|2\sqrt{s}+2\ell+1\right|\right)
\]
in Eq.~\eqref{eqn:le}.
We set
\[
\zeta=z^{s_0^-}\eta,\quad
z^{s_0^+}(z-1)^{1/2}\eta,\quad
z^{s_0^+}\eta\quad\mbox{and}\quad
z^{s_0^-}(z-1)^{1/2}\eta,
\]
so that the Riemann $P$ function \eqref{eqn:P} becomes
\begin{align*}
&
z^{s_0^-}
P\left\{
\begin{matrix}
0 & 1 & \infty\\
s_0^- & 0 & s_\infty^- & z\\
s_0^+ & \half & s_\infty^+
\end{matrix}
\right\}
=P\left\{
\begin{matrix}
0 & 1 & \infty\\
0 & 0 & s_\infty^- +s_0^- & z\\
s_0^+ -s_0^- & \half & s_\infty^+ +s_0^-
\end{matrix}
\right\},\\
&
z^{s_0^+}(z-1)^{1/2}
P\left\{
\begin{matrix}
0 & 1 & \infty\\
s_0^- & 0 & s_\infty^- & z\\
s_0^+ & \half & s_\infty^+
\end{matrix}
\right\}
=P\left\{
\begin{matrix}
0 & 1 & \infty\\
0 & -\half & s_\infty^- +s_0^+ +\half& z\\
s_0^- -s_0^+ & 0 & s_\infty^+ +s_0^+ +\half
\end{matrix}
\right\},\\
&
z^{s_0^+}
P\left\{
\begin{matrix}
0 & 1 & \infty\\
s_0^- & 0 & s_\infty^- & z\\
s_0^+ & \half & s_\infty^+
\end{matrix}
\right\}
=P\left\{
\begin{matrix}
0 & 1 & \infty\\
0 & 0 & s_\infty^- +s_0^+ & z\\
s_0^- -s_0^+ & \half & s_\infty^+ +s_0^+
\end{matrix}
\right\}
\end{align*}
and
\[
z^{s_0^-}(z-1)^{1/2}
P\left\{
\begin{matrix}
0 & 1 & \infty\\
s_0^- & 0 & s_\infty^- & z\\
s_0^+ & \half & s_\infty^+
\end{matrix}
\right\}
=P\left\{
\begin{matrix}
0 & 1 & \infty\\
0 & -\half & s_\infty^- +s_0^+ +\half& z\\
s_0^+ -s_0^- & 0 & s_\infty^+ +s_0^+ +\half
\end{matrix}
\right\},
\]
respectively.
Hence, Eq.~\eqref{eqn:2ode} is transformed to the hypergeometric equation
\[
z(1-z)\frac{\d^2\zeta}{\d z^2}+(c-(a+b+1)z)\frac{\d\zeta}{\d z}-ab\,\zeta=0,
\]
where $c=\pm(s_0^+ -s_0^-)+1=1$ and
\begin{align*}
(a,b)=&\left(\half(\ell+1),-\sqrt{s}-\half\ell\right),\quad
\left(-\half(\ell-1),\sqrt{s}+\half\ell+1\right),\\
&
\left(-\half\ell,\sqrt{s}+\half(\ell+1)\right)\quad\mbox{and}\quad
\left(\half\ell+1,-\sqrt{s}-\half(\ell-1)\right),
\end{align*}
respectively.
Using a well-known result on the hypergeometric equation (\eg, \cite{WW27}),
 we obtain a bounded solution of \eqref{eqn:exve2} as
\begin{equation}
\xi_2(t)=z^{\sqrt{s}/2}(1-z)^{1/2}\,F\!\left(-k+1,\sqrt{s}+k+\half,1;z\right)
\label{eqn:xi2a}
\end{equation}
for $\ell=2k-1$ and
\begin{equation}
\xi_2(t)=z^{\sqrt{s}/2}\,F\!\left(-k+1,\sqrt{s}+k-\half,1;z\right)
\label{eqn:xi2b}
\end{equation}
for $\ell=2(k-1)$,
 where $k\in\Nset$, $z=\sech^2 t$
 and $F(a,b,c;z)$ is the hypergeometric function,
\[
F(a,b,c;z)=\sum_{j=0}^\infty
 \frac{a(a+1)\cdots(a+j-1)b(b+1)\cdot(b+j-1)}{j!\,c(c+1)\cdots(c+j-1)}z^j,
\]
which becomes a finite series when $a$ is a nonpositive integer.  
Note that Eq.~\eqref{eqn:2ode} also allows a solution of finite series as
\[
\xi_2(t)=z^{-\sqrt{s}/2}\,F\!\left(-k+1,-\sqrt{s}+k-\half,1;z\right)
\]
for $\ell=-2k+1$ and
\[
\xi_2(t)=z^{-\sqrt{s}/2}(z-1)^{1/2}\,F\!\left(-k+1,-\sqrt{s}+k+\half,1;z\right)
\]
for $\ell=-2k$ with $k\in\Nset$ but they are unbounded
 since $z^{-1}=\cosh^2 t\rightarrow\infty$ as $t\rightarrow 0$.
Thus, for $\beta_3=\beta_4=0$,
 if $\beta_1$ satisfies \eqref{eqn:beta} with $\ell\ge 0$,
 then condition~(C) holds and the second independent bounded solution
 of the VE (\ref{eqn:exve1},b) is given
 in the form $\varphi_2(t)=(0,\xi_2(t),0,\dot{\xi}_2(t))^*$
 by \eqref{eqn:xi2a} or \eqref{eqn:xi2b}.

We next carry out the Melnikov analysis for \eqref{eqn:ex}.
Obviously, assumptions~(M1)-(M3) and (M6) hold,
 and assumptions~(M4) and (A5) for $\beta_3=\beta_4=0$.
Using \eqref{eqn:Hab}, we have
\begin{equation}
a_2=-\int_{-\infty}^{\infty}\xi_2(t)x_1^\h(t)\d t,\quad
b_2=-\beta_4\int_{-\infty}^{\infty}\xi_2^3(t)\d t
\label{eqn:exab1}
\end{equation}
if we take $\mu=\beta_3$ as a control parameter,
 where $\xi_2(t)$ is given by \eqref{eqn:xi2a} or \eqref{eqn:xi2b}.
Since $x_1^\h(t)$ is an even function of $t$
 and $\xi_2(t)$ is an even or odd function
 depending on whether $\ell$ $(\ge 0)$ in \eqref{eqn:beta} is even or odd,
 we easily see that $a_2,b_2\neq 0$ only if $\ell$ is even and $\beta_4\neq 0$.
See Appendix~A for computations of $a_2,b_2$ when $\ell=0,2,4$.

On the other hand, assume that $\beta_3=\beta_4=0$
 and let $\mu=\beta_1$ be a control parameter.
We take
\begin{align*}
\varphi_1^*(t)=&(\sech\,t\tanh t,0,2\,\sech^3 t-\sech\,t,0),\\
\varphi_3^*(t)
 =&\bigl({\textstyle\frac{3}{2}}t\,\sech\,t\tanh t+\half\sinh t\tanh t-\sech\,t,0,\\
&\quad
 3t\,\sech^3 t+3\,\sech\,t\tanh t-{\textstyle\frac{3}{2}}t\,\sech\,t
 +\half\sinh t\bigr)\\
\psi_1^*(t)=&(3t\,\sech^3 t+3\,\sech\,t\tanh t-{\textstyle\frac{3}{2}}t\,\sech\,t
 +\half\sinh t,0,\\
& \quad
 -{\textstyle\frac{3}{2}}t\,\sech\,t\tanh t-\half\sinh t\tanh t+\sech\,t,0),\\
\psi_3^*(t)=&(-2\,\sech^3 t+\sech\,t,0,\sech\,t\tanh t,0)
\end{align*}
(see \cite{G85}) so that
\[
\xi_1^\alpha(t)
 =\varphi_{11}(t)\int_0^t\psi_{13}(\tau)x_1^\h(\tau)\xi_2^2(\tau)\d\tau
 +\varphi_{31}(t)\int_0^t\psi_{33}(\tau)x_1^\h(\tau)\xi_2^2(\tau)\d\tau,
\]
where $\varphi_{ij}(t)$ and $\psi_{ij}(t)$ are the $j$-th components
 of $\varphi_i(t)$ and $\psi_i(t)$, respectively,
while $\xi^\mu(t)=0$ since $\D_\mu f(x^\h(t);0)=0$.
Hence,
\begin{equation}
\begin{split}
&
\bar{a}_2=-\int_{-\infty}^{\infty}\xi_2^2(t)\left[x_1^\h(t)\right]^2\d t,\\
&
\bar{b}_2=-2\beta_1\int_{-\infty}^{\infty}x_1^\h(t)\xi_1^\alpha(t)\xi_2^2(t)\d t
 -\beta_2\int_{-\infty}^{\infty}\xi_2^4(t)\d t.
\end{split}
\label{eqn:exab2}
\end{equation}
Note that $\xi_1^\alpha(t)$ is an even function of $t$
 since $\varphi_{11}(t),\psi_{33}(t)$ are odd
 and $\varphi_{31}(t),\psi_{13}(t)$ are even.
We easily see that $\bar{a}_2<0$
 and $\bar{b}_2\neq 0$ for almost all values of $\beta_2$
 although the integral including $\xi_1^\alpha(t)$ in \eqref{eqn:exab2}
 is difficult to be estimated analytically.  

Applying Theorems~\ref{thm:m4} and \ref{thm:m5},
 we obtain the following theorem.

\begin{thm}
\label{thm:ex2}
\begin{enumerate}[(i)]
\item
Choose $\beta_3$ as a control parameter and fix the other parameters.
Then a saddle-node bifurcation occurs at $\beta_3=0$
 in \eqref{eqn:ex} with $\beta_4\neq 0$
 if condition~\eqref{eqn:beta} holds for a nonnegative and even integer $\ell$
 and $a_2,b_2\neq 0$.
Moreover, it is supercritical or subcritical,
 depending on whether $a_2b_2<0$ or $>0$. 
\item
Choose $\beta_1$ as a control parameter and fix the other parameters,
 especially $\beta_3=\beta_4=0$.
Then a pitchfork bifurcation occurs in \eqref{eqn:ex} with $\beta_2\neq 0$
 if condition~\eqref{eqn:beta} holds for a nonnegative integer $\ell$
 and $\bar{b}_2\neq 0$.
Moreover, it is supercritical or subcritical,
 depending on whether $\bar{b}_2>0$ or $<0$. 
\end{enumerate}
\end{thm}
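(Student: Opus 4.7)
The plan is to verify that the hypotheses of Theorems~\ref{thm:m4} and \ref{thm:m5} are satisfied for the system \eqref{eqn:ex} in the respective scenarios, and then read off the bifurcation conclusion together with the sub/supercritical classification. For part~(i), I would first note that with $\beta_3=0$ (and \emph{any} value of $\beta_4$) the plane $\{x_2=x_4=0\}$ is invariant, so the homoclinic orbit $x^\h(t)=x_+^\h(t)$ from \eqref{eqn:exhomo} persists and (M1), (M2), (M6) hold with the given Hamiltonian. For part~(ii), taking additionally $\beta_4=0$ restores the $\Zset_2$-equivariance with the matrix $S$ already exhibited and places $x^\h(t)$ in $X^+$, so (M4) and (M5) also hold.

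Next I would verify (M3), \ie, that $n_0=2$, which is equivalent to condition~(C). The VE~(\ref{eqn:exve1},b) decouples; the $(\xi_1,\xi_3)$-block is the standard sech$^2$ equation which contributes exactly the one bounded solution $\dot{x}^\h(t)$, while the $(\xi_2,\xi_4)$-block reduces via $z=\sech^2 t$ to the Fuchsian equation \eqref{eqn:2ode} with three regular singular points. The Kimura-type Lemma~\ref{lem:ex} combined with the explicit exponents makes the monodromy (and hence differential Galois) group triangularizable precisely when \eqref{eqn:beta} holds for some $\ell\in\Zset$, and the hypergeometric reduction already carried out in the text supplies a bounded second solution $\xi_2(t)$ given by \eqref{eqn:xi2a} or \eqref{eqn:xi2b} whenever $\ell\ge 0$. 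Thus under \eqref{eqn:beta} with $\ell\ge 0$, condition~(C) holds and $\varphi_2(t)=(0,\xi_2(t),0,\dot\xi_2(t))^\ast$ is the required second bounded solution of the VE.

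For part~(i), I would now turn to the Melnikov constants \eqref{eqn:exab1}. The crucial observation is parity: $x_1^\h(t)$ is even in $t$, while $\xi_2(t)$ inherits a definite parity from $\cosh^{-1}(t)$ and the polynomial $F(-k+1,\ldots,1;\sech^2 t)$, so $\xi_2(t)$ is even iff $\ell$ is even. Hence $a_2=-\int\xi_2 x_1^\h\,\d t$ can be nonzero only when $\ell$ is even, and in that case $\xi_2^3(t)$ is also even, making $b_2=-\beta_4\int\xi_2^3\,\d t$ proportional to $\beta_4$ and generically nonzero when $\beta_4\neq0$. Under the stated hypotheses $a_2,b_2\neq 0$, Theorem~\ref{thm:m4} yields a saddle-node bifurcation at $\beta_3=0$ that is super- or subcritical according to $\mathrm{sgn}(a_2b_2)$. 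For part~(ii), the key simplification is that $\D_{\beta_1}f(x^\h(t);0)=0$ because $\beta_1$ multiplies monomials vanishing on $x_2=0$; consequently $\xi^\mu(t)\equiv 0$ in \eqref{eqn:Hxi} and \eqref{eqn:exab2} gives $\bar{a}_2=-\int\xi_2^2[x_1^\h]^2\,\d t<0$ automatically, while $\bar{b}_2$ is the sum of a $\beta_2$-linear contribution and a $\beta_1$-linear contribution involving $\xi_1^\alpha$. Invoking Theorem~\ref{thm:m5} then produces a pitchfork bifurcation whose direction is $\mathrm{sgn}(\bar{a}_2\bar{b}_2)=-\mathrm{sgn}(\bar{b}_2)$, \ie, supercritical iff $\bar{b}_2>0$, as claimed.

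The main obstacle is the nondegeneracy clause $\bar{b}_2\neq 0$ in part~(ii). Unlike $a_2,b_2,\bar{a}_2$, whose nonvanishing follows from parity and positivity of the integrands, $\bar{b}_2$ involves the particular solution $\xi_1^\alpha(t)$ of the inhomogeneous TVE, constructed via variation of parameters from the explicit fundamental matrix of the Poschl-Teller problem. This integral does not reduce in closed form; I would address it by (a) using the explicit even-parity structure of $\xi_1^\alpha$ and evaluating the relevant weighted integrals of Legendre-type functions in closed form for the low-$\ell$ cases $\ell=0,2,4$ (deferred to an appendix), and (b) arguing that, with $\beta_1$ fixed by \eqref{eqn:beta} and $\beta_2$ varying, the map $\beta_2\mapsto\bar{b}_2$ is affine with nonzero $\int\xi_2^4\,\d t$ coefficient, so $\bar{b}_2\neq 0$ for all but at most one value of $\beta_2$.
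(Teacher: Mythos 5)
Your proof is correct and follows essentially the same route as the paper: verify (M1)--(M6) in the respective scenarios, use the hypergeometric reduction and Lemma~\ref{lem:ex} to show $n_0=2$ exactly under \eqref{eqn:beta} with $\ell\ge 0$, exploit parity (and $\xi^\mu\equiv 0$, $\bar a_2<0$) to read off the Melnikov constants \eqref{eqn:exab1}--\eqref{eqn:exab2}, and invoke Theorems~\ref{thm:m4} and \ref{thm:m5}. Your last paragraph on establishing $\bar{b}_2\neq 0$ is superfluous, since Theorem~\ref{thm:ex2}(ii) takes $\bar{b}_2\neq 0$ as an explicit hypothesis rather than something to be deduced.
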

In particular, if $\ell=0$ or if $\ell=2,4$ and $s$ is sufficiently large
 ($s>0.16049$ or $s\ge 5.17784\times 10^{-7}$),
 then the quantity $a_2 b_2$ has the same sign as $\beta_4$ 
 as shown in Appendix~A,
 and the saddle-node bifurcations detected by Theorem~\ref{thm:ex2}(i)
 are subcritical (resp. supercritical) for $\beta_4>0$ (resp. for $\beta_4<0$). 

\begin{figure}[t]
\begin{center}
\includegraphics[scale=0.62]{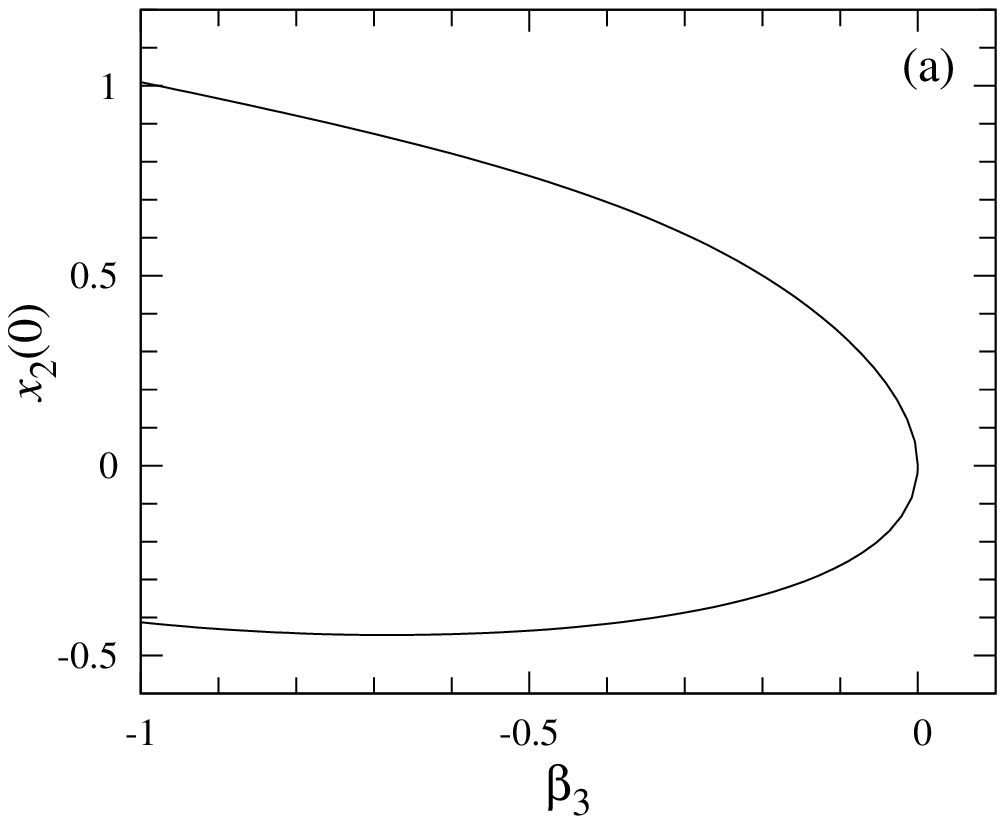}\quad
\includegraphics[scale=0.62]{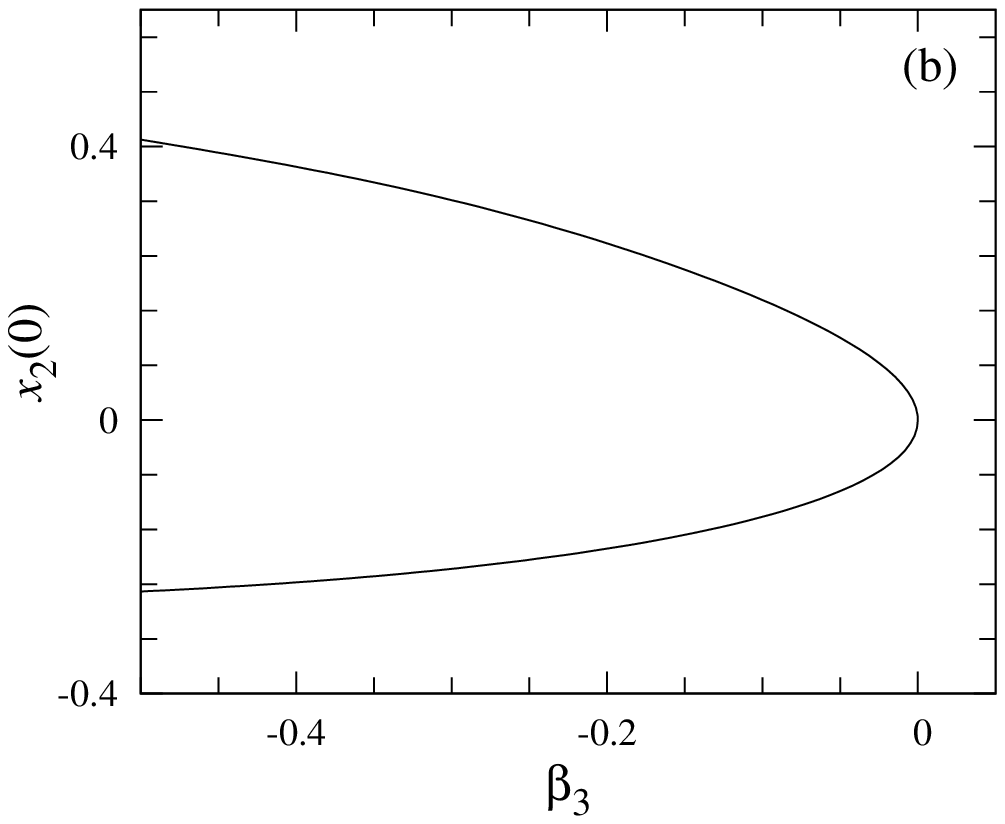}\\[2ex]
\includegraphics[scale=0.62]{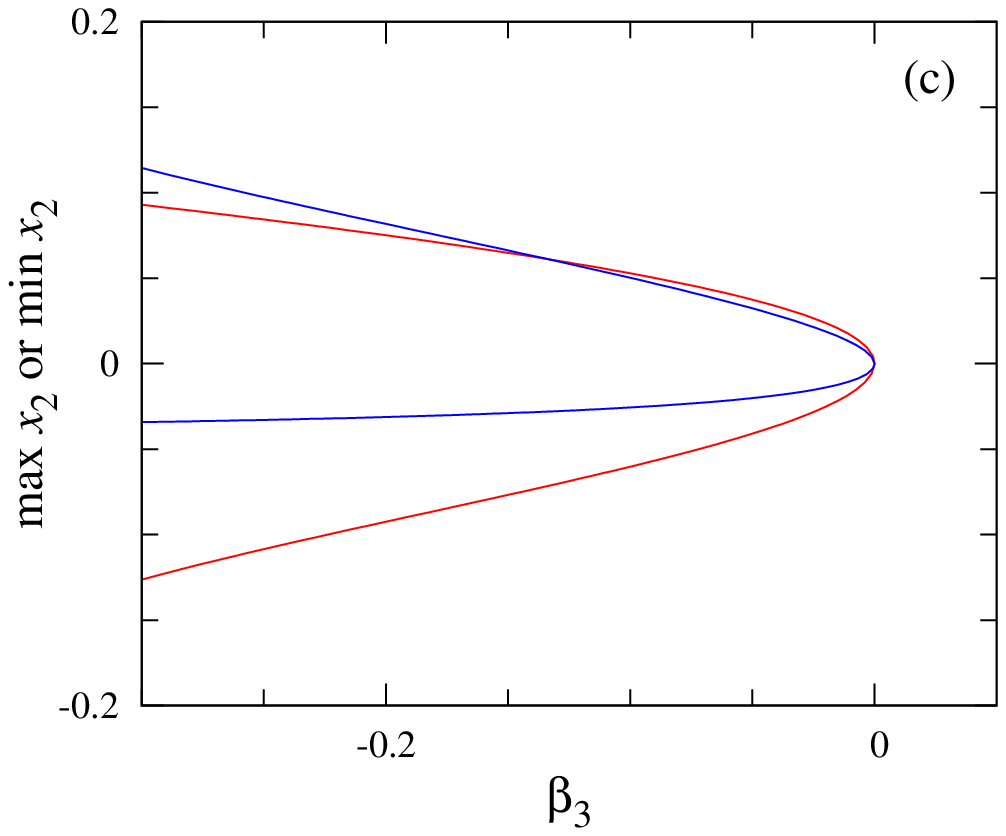}
\caption{Bifurcation diagram with $\beta_3$ a control parameter for $s=2$:
(a) $\beta_1=1.7071068$, $\beta_2=1$ and $\beta_4=2$;
(b) $\beta_1=7.5355339$, $\beta_2=1$ and $\beta_4=2$;
(c) $\beta_1=17.36396103$, $\beta_2=10$ and $\beta_4=20$.
Note that condition~\eqref{eqn:beta} approximately holds
 in plates~(a), (b) and (c) for $\ell=0,2$ and 4, respectively.
In plates~(c) two extrema varying continuously with $\beta_1$
 are plotted in red and blue.}
\label{fig:sn}
\end{center}
\end{figure}

\begin{figure}[t]
\begin{center}
\includegraphics[scale=0.52]{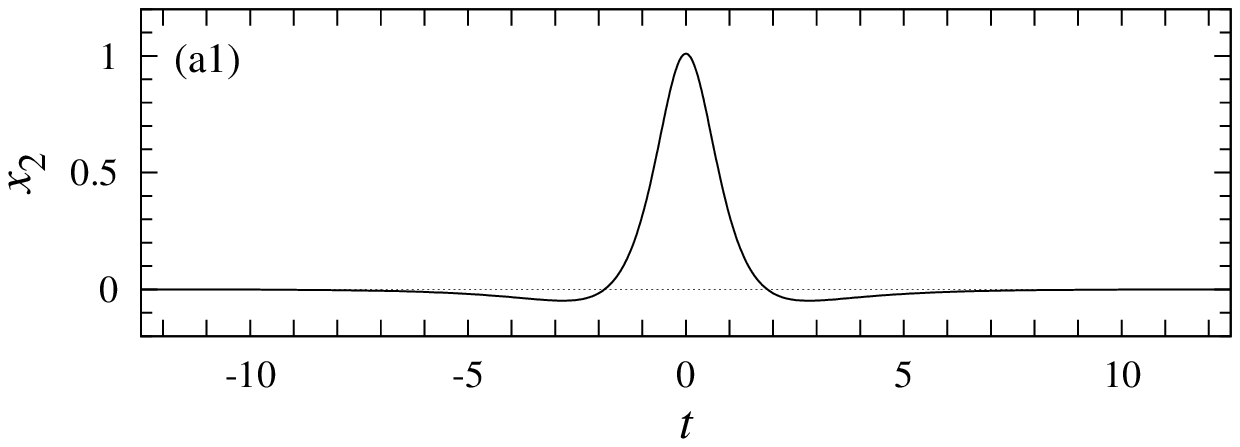}\quad
\includegraphics[scale=0.52]{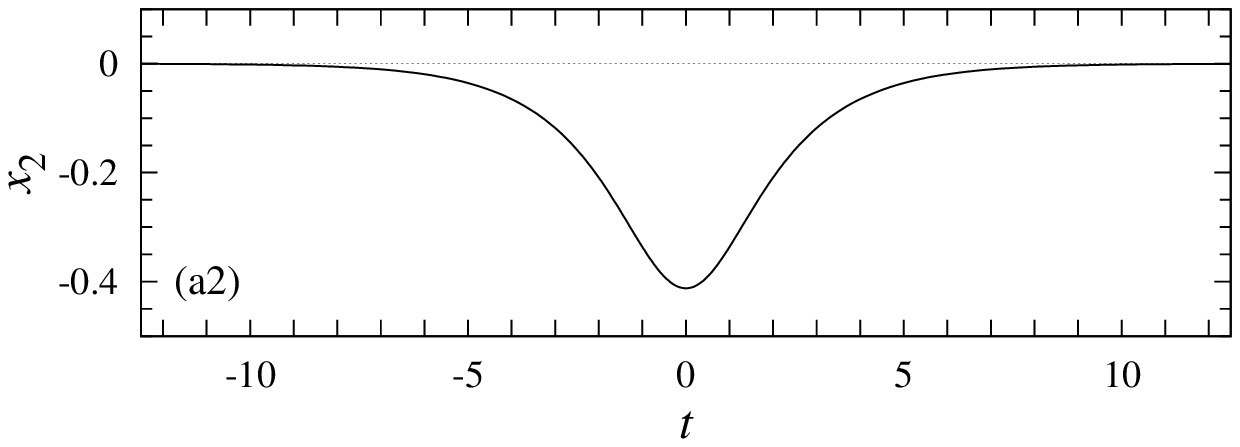}\\[2ex]
\includegraphics[scale=0.52]{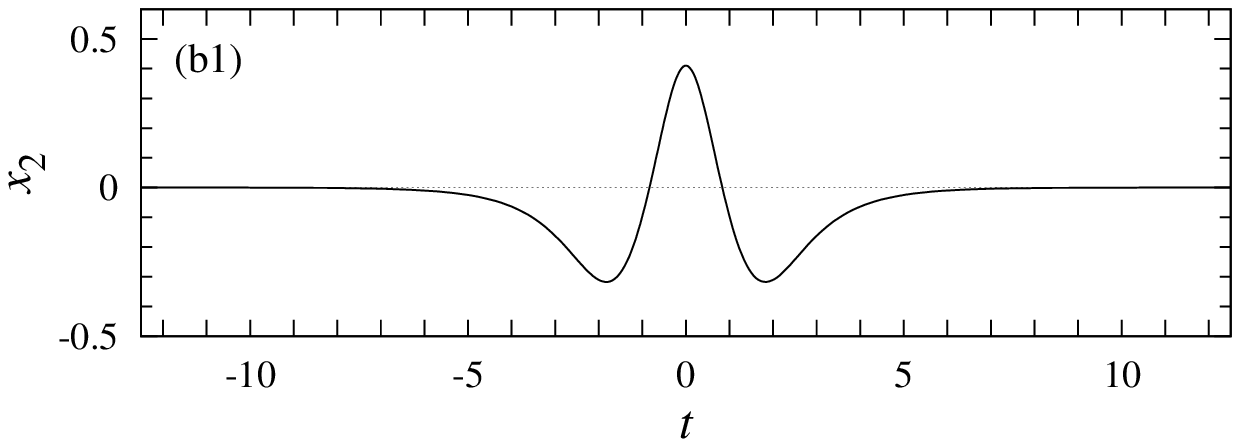}\quad
\includegraphics[scale=0.52]{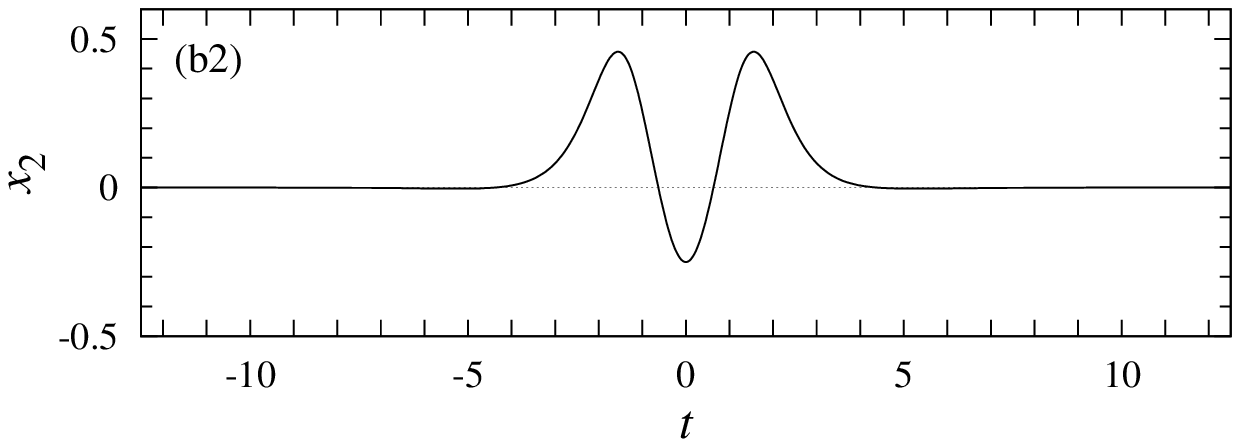}\\[2ex]
\includegraphics[scale=0.52]{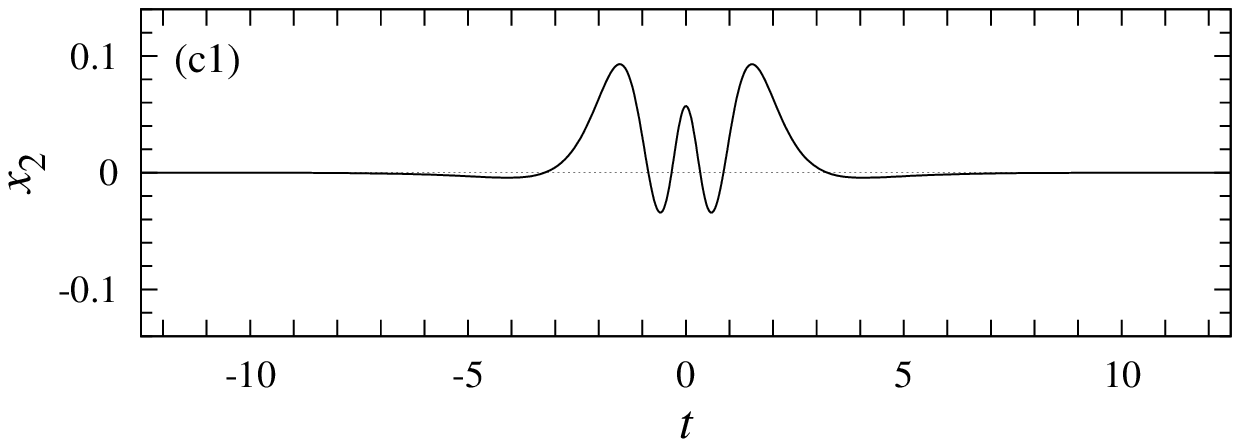}\quad
\includegraphics[scale=0.52]{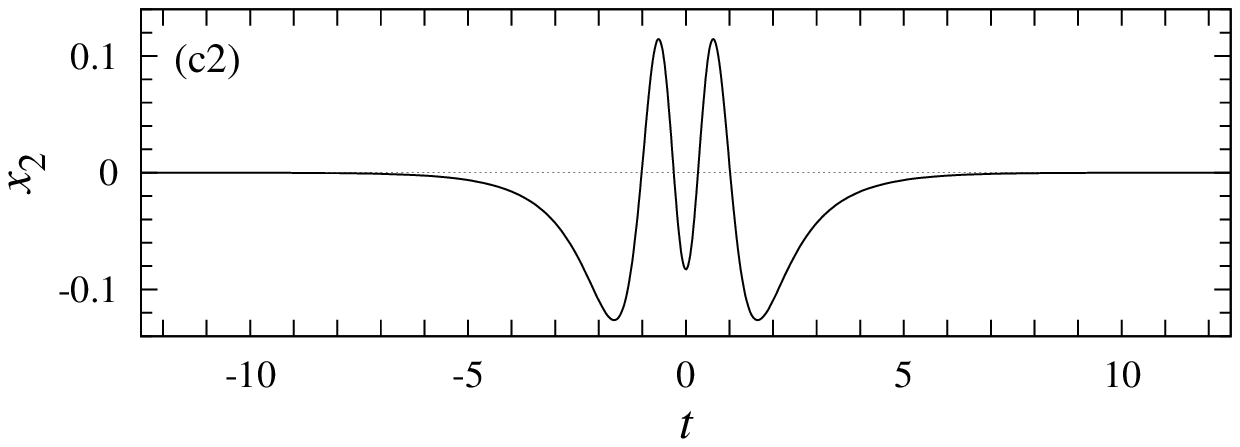}
\caption{Profiles of homoclinic orbits on the branches
 in Fig.~\ref{fig:sn}:
(a1,2) $\beta_3=1$;
(b1,2) $\beta_3=0.5$;
and (c1,2), $\beta_3=0.3$.
Other parameter values in plates~(a1,2), (b1,2) and (c1,2) are,
 respectively, the same as in plates~(a), (b) and (c) of Fig.~\ref{fig:sn}.
}
\label{fig:snprf}
\end{center}
\end{figure}

We finally give numerical results for \eqref{eqn:ex}.
We used the computation tool AUTO97 with HomCont \cite{AUTO97}
 and performed continuations of homoclinic orbits with two parameters
 in a general setting for
\begin{equation}
\begin{split}
&
\dot{x}_1=x_3,\quad
\dot{x}_3=x_1-(x_1^2+\beta_1 x_2^2)x_1-\beta_3 x_2-\nu_1 x_3,\\
&
\dot{x}_2=x_4,\quad
\dot{x}_4=sx_2-(\beta_1 x_1^2+\beta_2 x_2^2)x_2-\beta_3 x_1-\beta_4 x_2^2-\nu_1 x_4
\end{split}
\label{eqn:exd}
\end{equation}
instead of a system of the type \eqref{eqn:sys1}.
Note that
 a statement similar to that of Lemma~\ref{lem:m6} still hold in \eqref{eqn:exd}
 and the homoclinic orbit persists only if $\nu_1=0$.
The homoclinic orbit \eqref{eqn:exhomo}
 was taken as the starting solution in these continuations.
Henceforth we fix the parameter $s=2$.

Figure~\ref{fig:sn} shows branches of homoclinic orbits
 when $\beta_3$ is varied as a control parameter
 and $\beta_1$ satisfies condition~\eqref{eqn:beta} for $\ell=0,2$ and 4.
In plate~(c) of Fig.~\ref{fig:sn},
 the maximum and minimum of $x_2(t)$ are plotted
 since $x_2(t)$ has no maximum and minimum at $t=0$.
From Fig.~\ref{fig:sn} we see that saddle-node bifurcations occur at $\beta_3=0$,
 as predicted in Theorem~\ref{thm:ex2}.
Moreover, these bifurcations are subcritical,
 as predicted by Theorem~\ref{thm:ex2}(i)
 with the computations of $a_2$ and $b_2$ in Appendix~A.
Note that $x_2(t)=0$ at the bifurcation point.
We also remark that no saddle-node bifurcation was observed at $\beta_3=0$
 when $\beta_1$ satisfies condition~\eqref{eqn:beta} with $\ell=1,3$,
 and that secondary saddle-node bifurcations occur very near $\beta_3=0$
 and the branch shape becomes very different
 when $\ell$ is higher and $\beta_2,\beta_4$ are smaller
 (this is the reason why large values of $\beta_2,\beta_4$
 are taken for in plate~(c) of Fig.~\ref{fig:sn}).
Profiles of homoclinic orbits on the branches in Fig.~\ref{fig:sn}
 are given in Fig.~\ref{fig:snprf}.
Note that these homoclinic orbits are symmetric.
Asymmetric homoclinic orbits were also born from homoclinic orbits on these branches 
 at pitchfork bifurcations as well as at the secondary saddle-node bifurcations
 although no branches of asymmetric orbits are drawn in Fig.~\ref{fig:sn}.

\begin{figure}[t]
\begin{center}
\includegraphics[scale=0.62]{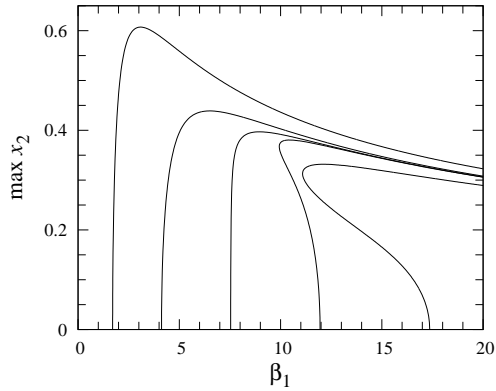}
\caption{Bifurcation diagram with $\beta_1$ a control parameter
 for $s=2$, $\beta_2=1$ and $\beta_3=\beta_4=0$.}
\label{fig:pf}
\end{center}
\end{figure}

\begin{figure}[t]
\begin{center}
\includegraphics[scale=0.58]{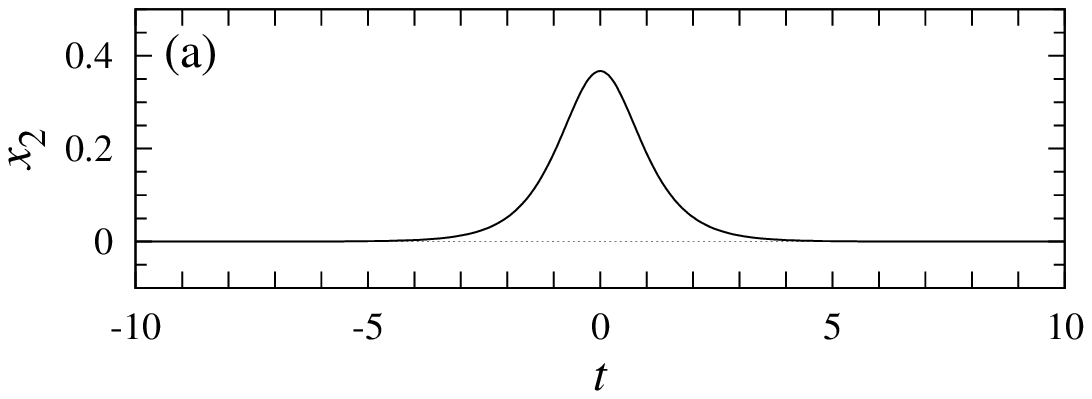}\quad
\includegraphics[scale=0.58]{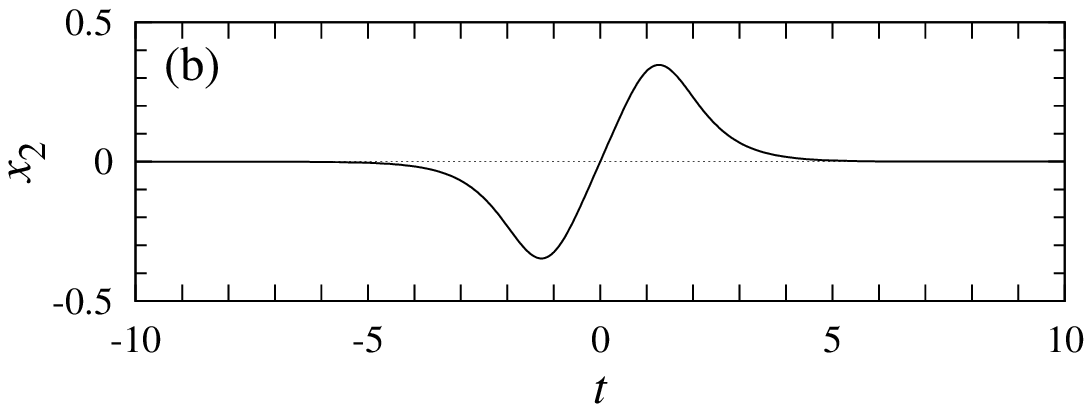}\\[2ex]
\includegraphics[scale=0.58]{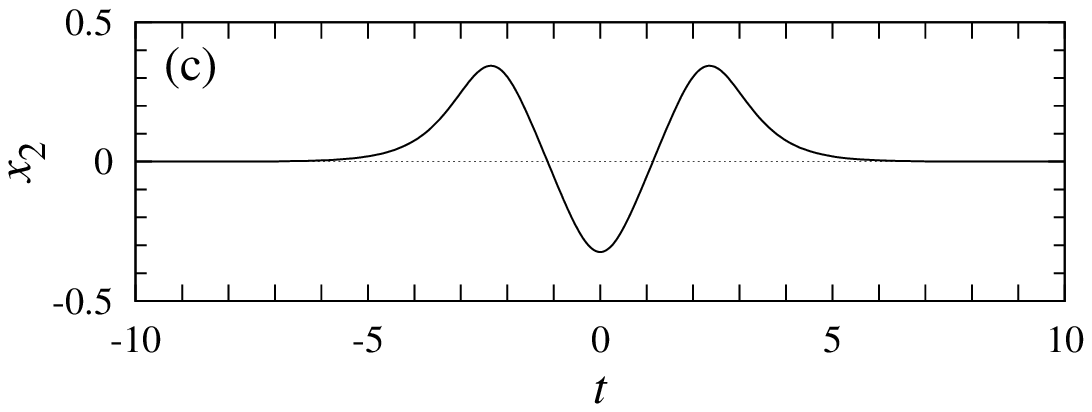}\quad
\includegraphics[scale=0.58]{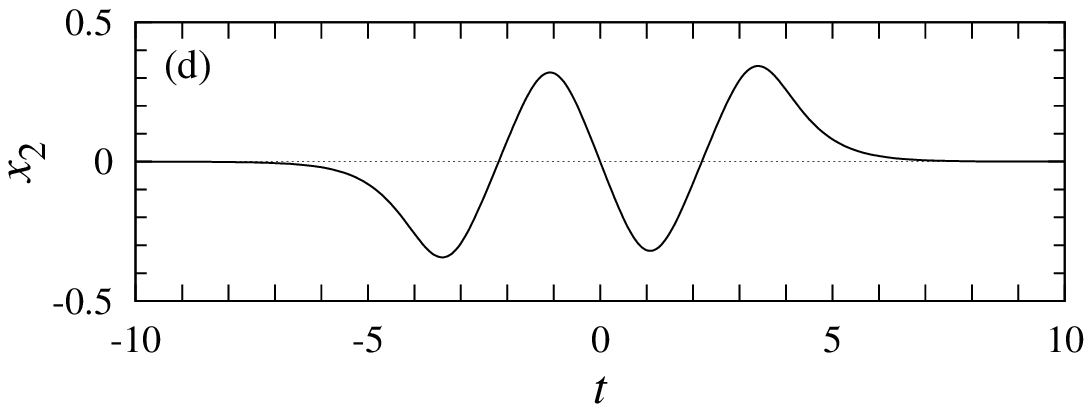}\\[2ex]
\includegraphics[scale=0.58]{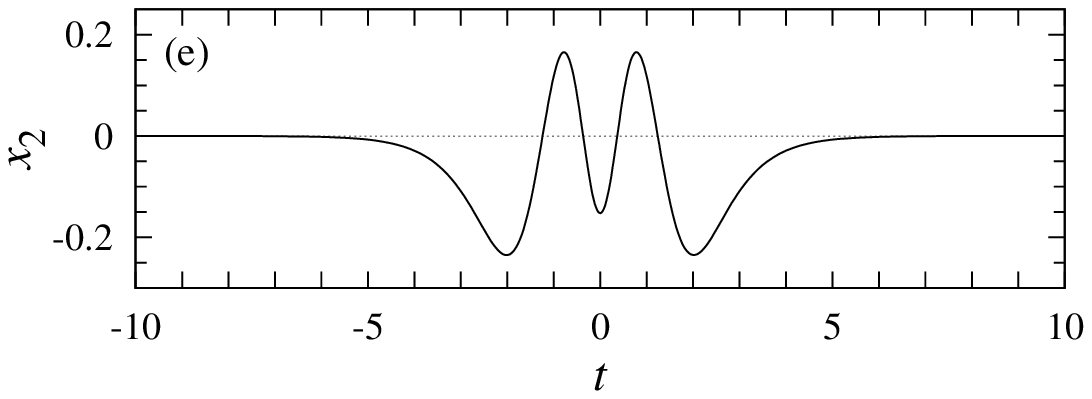}
\caption{Profiles of homoclinic orbits on the branches of $\ell=$0-4
 in Fig.~\ref{fig:pf} for $\beta_1=15$:
(a) $\ell=0$;
(b) $\ell=1$;
(c) $\ell=2$;
(d) $\ell=3$
and (e) $\ell=4$ (the lower branch).
Other parameter values are the same as in Fig.~\ref{fig:pf}.
}
\label{fig:pfprf}
\end{center}
\end{figure}

Figure~\ref{fig:pf} shows branches of homoclinic orbits
 when $\beta_1$ is varied as a control parameter
 for $\beta_2=1$ and $\beta_3=\beta_4=0$.
Note that there exist
 a branch of $x_2(=x_4)=0$ for all values of $\beta_1$, and a branch
 which is symmetric about $\max x_2=0$ to each one in Fig.~\ref{fig:pf}.
From Fig.~\ref{fig:pf} we see that pitchfork bifurcations occur
 when $\beta_1$ satisfies condition~\eqref{eqn:beta} for $\ell=$0-4,
 as predicted in Theorem~\ref{thm:ex2}.
The first three bifurcations are supercritical,
 and the rest two ones are subcritical.
Profiles of homoclinic orbits on the branches in Fig.~\ref{fig:pf}
 are given in Fig.~\ref{fig:pfprf}.
Note that homoclinic orbits on all branches in Fig.~\ref{fig:pf}
 are symmetric, like the profiles plotted in Fig.~\ref{fig:pfprf}.

\section*{Acknowledgments}
This work was partially supported
 by the Japan Society for the Promotion of Science (JSPS)
 through Grant-in-Aid for JSPS Fellows No.~21$\cdot$09222.
DBS acknowledges support
 from MICINN-FEDER grant MTM2009-06973
 and CUR-DIUE grant 2009SGR859,
 and thanks his colleagues at Instituto de Matem\'aticas y sus Aplicaciones, 
 Universidad Sergio Arboleda, for helpful discussion and encouragement.
KY appreciates support from the JSPS
 through Grant-in-Aid for Scientific Research (C) Nos.~21540124 and 22540180.

\appendix

\section{Computations of $a_2,b_2$ given by \eqref{eqn:exab1} for $\ell=0,2,4$}

We first note that
\begin{equation}
\int_{-\infty}^\infty\sech^a t\,\d t
 =\frac{2^{a-1}\Gamma^2(\half a)}{\Gamma(a)},
\label{eqn:formula}
\end{equation}
where $\Gamma(z)$ is the Gamma function,
\[
\Gamma(z)=\int_0^\infty t^{z-1}\e^{-t}\d t.
\]
Using the formula~\eqref{eqn:formula},
 we can compute $a_2,b_2$ as follows.

\subsection{Case of $\ell=0$}
We set $k=1$ in \eqref{eqn:xi2b} to have
\[
\xi_2(t)=\sech^{\sqrt{s}}t,
\]
so that
\begin{align*}
&
a_2=-\sqrt{2}\int_{-\infty}^\infty\sech^{\sqrt{s}+1}t\,\d t
 =-\frac{2^{\sqrt{s}+1/2}\Gamma^2(\half\sqrt{s}+\half)}{\Gamma(\sqrt{s}+1)},\\
&
b_2=-\beta_4\int_{-\infty}^\infty\sech^{3\sqrt{s}}t\,\d t
 =-\frac{2^{3\sqrt{s}-1}\Gamma^2(\frac{3}{2}\sqrt{s})}{\Gamma(3\sqrt{s})}\beta_4.
\end{align*}
Hence, the quantity $a_2 b_2$ has the same sign as $\beta_4$
 since $\Gamma(z)>0$ for $z>0$. 

\subsection{Case of $\ell=2$}
We set $k=2$ in \eqref{eqn:xi2b} to have
\[
\xi_2(t)=\sech^{\sqrt{s}}t
 \left(1-\left(\sqrt{s}+{\textstyle\frac{3}{2}}\right)\sech^2 t\right),
\]
so that
\begin{align*}
a_2=&-\sqrt{2}\int_{-\infty}^\infty\left(\sech^{\sqrt{s}+1}t
 -\left(\sqrt{s}+{\textstyle\frac{3}{2}}\right)\sech^{\sqrt{s}+3}t\right)\d t\\
=&-\frac{2^{\sqrt{s}+1/2}\Gamma^2(\half\sqrt{s}+\half)}{\Gamma(\sqrt{s}+1)}
 +\left(\sqrt{s}+{\textstyle\frac{3}{2}}\right)
 \frac{2^{\sqrt{s}+5/2}\Gamma^2(\half\sqrt{s}+\frac{3}{2})}{\Gamma(\sqrt{s}+3)}\\
=&\frac{2^{\sqrt{s}-1/2}(2s+3\sqrt{s}-1)\Gamma^2(\half\sqrt{s}+\half)}
 {(\sqrt{s}+2)\Gamma(\sqrt{s}+1)},\\
b_2=&-\beta_4\int_{-\infty}^\infty\Bigl(\sech^{3\sqrt{s}}t
 -3\left(\sqrt{s}+{\textstyle\frac{3}{2}}\right)\sech^{3\sqrt{s}+2}t\\
& \qquad
 +3\left(\sqrt{s}+{\textstyle\frac{3}{2}}\right)^2\sech^{3\sqrt{s}+4}t
 -\left(\sqrt{s}+{\textstyle\frac{3}{2}}\right)^3\sech^{3\sqrt{s}+6}t\Bigr)\d t\\
=&-\beta_4\biggl(
 \frac{2^{3\sqrt{s}-1}\Gamma^2(\frac{3}{2}\sqrt{s})}{\Gamma(3\sqrt{s})}
 -3\left(\sqrt{s}+{\textstyle\frac{3}{2}}\right)
 \frac{2^{3\sqrt{s}+1}\Gamma^2(\frac{3}{2}\sqrt{s}+1)}{\Gamma(3\sqrt{s}+2)}\\
& +3\left(\sqrt{s}+{\textstyle\frac{3}{2}}\right)^2
 \frac{2^{3\sqrt{s}+3}\Gamma^2(\frac{3}{2}\sqrt{s}+2)}{\Gamma(3\sqrt{s}+4)}
 -\left(\sqrt{s}+{\textstyle\frac{3}{2}}\right)^3
 \frac{2^{3\sqrt{s}+5}\Gamma^2(\frac{3}{2}\sqrt{s}+3)}{\Gamma(3\sqrt{s}+6)}
\biggr)\\
=& \frac{2^{3\sqrt{s}-4}(72s^3+252s^{5/2}+262s^2+93s^{3/2}+72s+32\sqrt{s}-40)
 \Gamma^2(\frac{3}{2}\sqrt{s})}
 {(3\sqrt{s}+1)(\sqrt{s}+1)(3\sqrt{s}+5)\Gamma(3\sqrt{s})}\beta_4,
\end{align*}
where we used the relation $\Gamma(z+1)=z\Gamma(z)$.
We see that the quantity $a_2 b_2$ has the same sign as $\beta_4$
 for $s\ge 0.16049$.

\subsection{Case of $\ell=4$}
We set $k=3$ in \eqref{eqn:xi2b} to have
\[
\xi_2(t)=\sech^{\sqrt{s}}t
 \left(1-(2\sqrt{s}+5)\sech^2 t
 +\left(\sqrt{s}+{\textstyle\frac{5}{2}}\right)
 \left(\sqrt{s}+{\textstyle\frac{7}{2}}\right)\sech^4 t\right),
\]
so that
\begin{align*}
a_2=&-\sqrt{2}\int_{-\infty}^\infty\bigl(\sech^{\sqrt{s}+1}t
 -(2\sqrt{s}+5)\sech^{\sqrt{s}+3}t\\
& \qquad
 +\left(\sqrt{s}+{\textstyle\frac{5}{2}}\right)
 \left(\sqrt{s}+{\textstyle\frac{7}{2}}\right)\sech^{\sqrt{s}+5}t\bigr)\d t\\
=&-\frac{2^{\sqrt{s}+1/2}\Gamma^2(\half\sqrt{s}+\half)}{\Gamma(\sqrt{s}+1)}
 +(2\sqrt{s}+5)\frac{2^{\sqrt{s}+5/2}\Gamma^2(\half\sqrt{s}+\frac{3}{2})}
 {\Gamma(\sqrt{s}+3)}\\
&
-\left(\sqrt{s}+{\textstyle\frac{5}{2}}\right)
 \left(\sqrt{s}+{\textstyle\frac{7}{2}}\right)
 \frac{2^{\sqrt{s}+9/2}\Gamma^2(\half\sqrt{s}+\frac{5}{2})}
 {\Gamma(\sqrt{s}+5)}\\
=&\frac{2^{\sqrt{s}-3/2}(4s^2+48s^{3/2}+199s+320\sqrt{s}+153)
 \Gamma^2(\half\sqrt{s}+\half)}
 {(\sqrt{s}+2)(\sqrt{s}+4)\Gamma(\sqrt{s}+1)},\\
b_2=&-\beta_4\int_{-\infty}^\infty\sech^{3\sqrt{s}}t
 \bigl(1-(2\sqrt{s}+5)\sech^2 t\\
&\qquad +\left(\sqrt{s}+{\textstyle\frac{5}{2}}\right)
 \left(\sqrt{s}+{\textstyle\frac{7}{2}}\right)\sech^4 t\bigr)^3\d t\\
=&-\beta_4\biggl(
 \frac{2^{3\sqrt{s}-1}\Gamma^2(\frac{3}{2}\sqrt{s})}{\Gamma(3\sqrt{s})}
 -3(2\sqrt{s}+5)\frac{2^{3\sqrt{s}+1}
 \Gamma^2(\frac{3}{2}\sqrt{s}+1)}{\Gamma(3\sqrt{s}+2)}\\
& +\left(\sqrt{s}+{\textstyle\frac{5}{2}}\right)
 \left(5\sqrt{s}+{\textstyle\frac{27}{2}}\right)
\frac{2^{3\sqrt{s}+3}
 \Gamma^2(\frac{3}{2}\sqrt{s}+2)}{\Gamma(3\sqrt{s}+4)}\\
& -2\left(\sqrt{s}+{\textstyle\frac{5}{2}}\right)^2\left(10\sqrt{s}+31\right)
\frac{2^{3\sqrt{s}+5}
 \Gamma^2(\frac{3}{2}\sqrt{s}+3)}{\Gamma(3\sqrt{s}+6)}\\
& +3\left(\sqrt{s}+{\textstyle\frac{5}{2}}\right)^2
 \left(\sqrt{s}+{\textstyle\frac{7}{2}}\right)
 \left(\sqrt{s}+{\textstyle\frac{27}{2}}\right)
\frac{2^{3\sqrt{s}+7}
 \Gamma^2(\frac{3}{2}\sqrt{s}+4)}{\Gamma(3\sqrt{s}+8)}\\
& -6\left(\sqrt{s}+{\textstyle\frac{5}{2}}\right)^3
 \left(\sqrt{s}+{\textstyle\frac{7}{2}}\right)^3
\frac{2^{3\sqrt{s}+9}
 \Gamma^2(\frac{3}{2}\sqrt{s}+5)}{\Gamma(3\sqrt{s}+10)}\\
& +\left(\sqrt{s}+{\textstyle\frac{5}{2}}\right)^3
 \left(\sqrt{s}+{\textstyle\frac{7}{2}}\right)^2
\frac{2^{3\sqrt{s}+11}
 \Gamma^2(\frac{3}{2}\sqrt{s}+6)}{\Gamma(3\sqrt{s}+12)}
\biggr)\\
=&\frac{2^{3\sqrt{s}-7}g(\sqrt{s})\Gamma^2(\frac{3}{2}\sqrt{s})}
 {(3\sqrt{s}+1)(\sqrt{s}+1)(3\sqrt{s}+5)(3\sqrt{s}+7)(\sqrt{s}+3)(3\sqrt{s}+11)
\Gamma(3\sqrt{s})}\beta_4,
\end{align*}
where
\begin{align*}
g(s)=&
 5184s^{12}+176256s^{11}+2519568s^{10}+20488032s^9+106620652 s^8\\
&
 +375344312s^7+915087795s^6+1546383098s^5+1772860056s^4\\
&
 +1308687720s^3+556461984s^2+102326688s-73920.
\end{align*}
We also see that the quantity $a_2 b_2$ has the same sign as $\beta_4$
 for $s\ge 5.17784\times 10^{-7}$.


\noindent{\sc Kazuyuki~Yagasaki \\
Mathematics Division, \\
Department of Information Engineering, \\
Niigata University,\\
Niigata 950-2181, Japan} \\
{\tt yagasaki@ie.niigata-u.ac.jp}

\medskip

\noindent{\sc David~Bl\'azquez-Sanz \\
Mathematics Division, \\
Department of Information Engineering, \\
Niigata University,\\
Niigata 950-2181, Japan} \\
\smallskip
Permanent adress: \\
{\sc Escuela de Matem\'aticas,\\
Sergio Arboleda University,\\
Bogot\'a, Colombia \\
}
{\tt david.blazquez-sanz@usa.edu.co}
\end{document}